\numberwithin{equation}{section}
\newtheorem{theoremcounter}{theoremcounter}[section]
\newtheorem{thmstarcounter}{thmstarcounter}
\newtheorem{corollary}[theoremcounter]{Corollary}
\newtheorem{lemma}[theoremcounter]{Lemma}
\newtheorem{proposition}[theoremcounter]{Proposition}
\newtheorem{theorem}[theoremcounter]{Theorem}
\newtheorem{thmstar}[thmstarcounter]{Theorem}
\theoremstyle{definition}
\newtheorem{definition}[theoremcounter]{Definition}
\newtheorem{remark}[theoremcounter]{Remark}
\newcommand{\cR}{\ensuremath{\mathcal{R}}}
\newcommand{\cS}{\ensuremath{\mathcal{S}}}
\newcommand{\cZ}{\ensuremath{\mathcal{Z}}}
\newcommand{\rG}{\ensuremath{\mathrm{G}}}
\newcommand{\rK}{\ensuremath{\mathrm{K}}}
\newcommand{\vphi}{\ensuremath{\varphi}}
\newcommand{\ol}{\overline}
\newcommand{\wt}{\widetilde}
\newcommand{\amid}{\ensuremath{\, | \,}}
\newcommand{\eqstop}{\ensuremath{\text{.}}}
\newcommand{\eqcomma}{\ensuremath{\text{,}}}
\newcommand{\NN}{\ensuremath{\mathbb{N}}}
\newcommand{\ZZ}{\ensuremath{\mathbb{Z}}}
\newcommand{\nisdiv}{\ensuremath{\mathop{\nmid}}}
\newcommand{\id}{\ensuremath{\mathrm{id}}}
\newcommand{\ra}{\ensuremath{\rightarrow}}
\newcommand{\hra}{\ensuremath{\hookrightarrow}}
\newcommand{\dom}{\ensuremath{\mathrm{dom}}}
\newcommand{\Sym}{\ensuremath{\mathrm{Sym}}}
\newcommand{\Aut}{\ensuremath{\mathrm{Aut}}}
\newcommand{\supp}{\ensuremath{\mathop{\mathrm{supp}}}}
\newcommand{\Ad}{\ensuremath{\mathop{\mathrm{Ad}}}}
\newcommand{\grpaction}[1]{\ensuremath{\stackrel{#1}{\curvearrowright}}}
\newcommand{\Stab}{\ensuremath{\mathrm{Stab}}}
\newcommand{\BS}{\ensuremath{\mathrm{BS}}}
\newcommand{\HNN}{\ensuremath{\mathrm{HNN}}}
\newcommand{\Prob}{\ensuremath{\mathrm{Prob}}}
\newcommand{\QN}{\ensuremath{\mathrm{QN}}}
\newcommand{\rng}{\ensuremath{\mathrm{rng}}}
\renewcommand{\dom}{\ensuremath{\mathrm{dom}}}
\renewcommand{\graph}{\ensuremath{\mathrm{graph}}}
\newcommand{\rp}{/ \hspace{-1mm} /}
\begin{document}
\begin{center}
  \textbf{\LARGE Baumslag-Solitar groups, relative profinite completions and measure equivalence rigidity}

\vspace{5mm}

  {\large by Cyril Houdayer\footnote{supported by ANR Grant NEUMANN} and Sven Raum\footnote{supported by KU Leuven BOF research grant OT/08/032}}
\end{center}

\begin{abstract}
\noindent
We introduce an algebraic invariant for aperiodic inclusions of probability measure preserving equivalence relations. We use this invariant to prove that every stable orbit equivalence between free pmp actions of direct products of non-amenable Baumslag-Solitar groups whose canonical subgroup acts aperiodically forces the number of factors of the products to be the same and the factors to be isomorphic after permutation. This generalises some of the results obtained by Kida in \cite{kida11-BS} and moreover provides new measure equivalence rigidity phenomena for Baumslag-Solitar groups. We also obtain a complete classification of direct products of relative profinite completions of Baumslag-Solitar groups, continuing recent work of Elder and Willis in \cite{elderwillis13}.
\end{abstract}

\section*{Introduction and statement of main results}
\label{sec:introduction}

The subject of {\em measured group theory} was introduced by Gromov in \cite{gromov93-asymptotic-invariants}.  It aims at studying discrete groups from a measure theoretic point of view. One of the fundamental problems in measured group theory is the classification of discrete groups up to {\em measure equivalence}. It is of particular interest to classify natural classes of discrete groups. Measured group theory has links to other active fields of mathematics such as von Neumann algebras \cite{popa07-cocycle-superrigidity, popa07-deformation-rigidity, popa08-spectral-gap}, geometric group theory \cite{shalom04, monodshalom06, kida06, kida09} and ergodic theory \cite{dye59, dye63, ornsteinweiss80,connesfeldmanweiss81, furman99, gaboriau00, gaboriau02}.  Surveys of current topics in measured group theory can be found in \cite{furman11,gaboriau11-measured-group-theory}.

The {\em Baumslag-Solitar groups} were introduced in  \cite{baumslagsolitar62} as the first examples of finitely presented non-Hopfian groups. Recall that for all $m, n \in \ZZ \setminus \{0\}$, the Baumslag-Solitar group $\BS(m, n)$ with parameters $(m, n)$ is defined by the presentation
\begin{equation*}
  \BS(m, n) = \langle a, t \amid t a^m t^{-1} = a^n\rangle
  \eqstop
\end{equation*}
The classification of the Baumslag-Solitar groups, up to isomorphism, was obtained in \cite{moldavanskii91}: $\BS(m, n) \cong \BS(p, q)$ if and only if there exists $\varepsilon \in \{-1, 1\}$ such that $\{m, n\} = \{\varepsilon p, \varepsilon q\}$. In this paper, we will always assume that $1 \leq |m| \leq n$. Note that for every $n \geq 1$, $\BS(\pm 1, n)$ is solvable, hence amenable and for every $2 \leq |m| \leq n$, $\BS(m, n)$ is non-amenable. The classification of the Baumslag-Solitar groups, up to quasi-isometry, was an important result in geometric group theory (see \cite{farbmosher98-BS-rigidity,whyte01-BS}). Recently, a partial classification of the von Neumann algebras of the Baumslag-Solitar groups was obtained in \cite{meesschaertvaes13}. 

The aim of the present paper is to obtain new rigidity results for all non-amenable Baumslag-Solitar groups and their direct products in the framework of measured group theory. Theorem~\ref{thm:main-result} provides  new stable orbit equivalence (SOE) rigidity results for actions of direct products of arbitrary non-amenable Baumslag-Solitar groups.

In \cite{kida11-BS}, Kida obtained several rigidity results for measure equivalence couplings of non-amenable Baumslag-Solitar groups. He showed in \cite[Theorem 1.4]{kida11-BS} that for a large class of non-amenable Baumslag-Solitar groups, any measurable coupling between such Baumslag-Solitar groups which is aperiodic on the natural cyclic subgroups, can only exist between Baumslag-Solitar groups which are isomorphic. Recall that a pmp action $\Gamma \grpaction{} (X, \mu)$ of a discrete countable group on a standard probability space is {\em aperiodic} if every finite index subgroup of $\Gamma$ acts ergodically on $(X, \mu)$. For instance, if the action $\Gamma \grpaction{} (X, \mu)$ is weakly mixing then it is aperiodic.

We will make use of the following notations: 
\begin{itemize}
\item For all $1 \leq i \leq k$, put $\BS(m_i,n_i) = \langle a_i, t_i \amid t_i a_i^{m_i} t_i^{-1} = a_i^{n_i} \rangle$.
\item For all $1 \leq j \leq l$, put $\BS(p_j,q_j) = \langle b_j, u_j \amid u_j b_j^{p_j} u_j^{-1} = b_j^{q_j} \rangle$.
\end{itemize}

\begin{thmstar}
  \label{thm:main-result}
 Let $k, l \in \NN \setminus \{0\}$. For every $i \in \{1, \dotsc, k\}$ and every $j \in \{1, \dotsc, l\}$, let $2 \leq |m_i| \leq n_i$ and $2 \leq |p_j| \leq q_j$.  Let
  \begin{gather*}
    \BS(m_1, n_1) \times \dotsm \times \BS(m_k, n_k) \grpaction{} (X, \mu) 
    \quad \text{and} \quad
    \BS(p_1, q_1) \times \cdots \times \BS(p_l, q_l) \grpaction{} (Y, \eta)
  \end{gather*}
  be stably orbit equivalent free ergodic probability measure preserving actions such that the actions $\langle a_1 \rangle \times \dotsm \times \langle a_k \rangle \curvearrowright (X, \mu)$ and $\langle b_1 \rangle \times \dotsm \times \langle b_l \rangle \curvearrowright (Y, \eta)$ are aperiodic.  
  
Then $k = l$ and there is a permutation $\sigma \in \Sym(k)$ such that  
for every $i \in \{1, \dotsc, k\}$, we have 
\begin{itemize}
\item $|m_i| = n_i = |p_{\sigma(i)}| = q_{\sigma(i)}$, if $|m_i| = n_i$ and
\item $m_i = p_{\sigma(i)}$ and $n_i = q_{\sigma(i)}$, if $|m_i| \neq n_i$.
\end{itemize} 
\end{thmstar}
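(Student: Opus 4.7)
The plan is to combine a product-rigidity reduction with the new algebraic invariant for aperiodic inclusions of pmp equivalence relations introduced in the paper, and then feed the output into the classification of relative profinite completions of Baumslag-Solitar groups that the paper develops.

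First, I would reduce to the factorwise case. Each non-amenable $\BS(m, n)$ acts non-elementarily on its Bass-Serre tree with amenable (infinite cyclic) vertex and edge stabilisers, and therefore belongs to a class of groups for which orbit-equivalence product rigidity in the spirit of Monod-Shalom (refined by Kida and Sako) should apply. Applied to the given stable orbit equivalence between the two product actions, this would yield $k = l$, produce a permutation $\sigma \in \Sym(k)$, and deliver a coherent stable orbit equivalence between the subrelations generated by each individual factor $\BS(m_i, n_i)$ and $\BS(p_{\sigma(i)}, q_{\sigma(i)})$ on appropriate ergodic restrictions. Aperiodicity of $\langle a_1 \rangle \times \dotsm \times \langle a_k \rangle$ descends to aperiodicity of each $\langle a_i \rangle$ on the factor bases (and similarly for the $\langle b_j \rangle$'s), so that the hypothesis needed for the new invariant is preserved throughout the reduction.

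Second, I would apply the algebraic invariant to each matched pair of aperiodic inclusions
\begin{equation*}
  \cR(\langle a_i \rangle \curvearrowright X_i) \subset \cR(\BS(m_i, n_i) \curvearrowright X_i), \qquad \cR(\langle b_{\sigma(i)} \rangle \curvearrowright Y_i) \subset \cR(\BS(p_{\sigma(i)}, q_{\sigma(i)}) \curvearrowright Y_i).
\end{equation*}
The key computation, to be carried out in the body of the paper, is that for a free pmp action of $\BS(m, n)$ whose restriction to $\langle a \rangle$ is aperiodic, this invariant recovers the Hecke pair consisting of the relative profinite completion of $\BS(m, n)$ with respect to $\langle a \rangle$, together with the closure of $\langle a \rangle$ as a compact open subgroup. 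Since stable orbit equivalence preserves the invariant, this gives a topological isomorphism of these Hecke pairs for each pair of matched factors.

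Third, I would invoke the classification of relative profinite completions of non-amenable Baumslag-Solitar groups (continuing Elder-Willis) to read off the parameters. In the non-unimodular case $|m_i| \neq n_i$, the scale function on the completion distinguishes the ordered pair $(m_i, n_i)$, forcing $m_i = p_{\sigma(i)}$ and $n_i = q_{\sigma(i)}$. In the unimodular case $|m_i| = n_i$, only the absolute value is visible to the completion, which gives the weaker conclusion $|m_i| = n_i = |p_{\sigma(i)}| = q_{\sigma(i)}$ in agreement with the statement. The main obstacle, in my view, lies in the interplay between the first two steps: one has to execute the product-rigidity reduction in a way that both preserves the aperiodicity assumption on each cyclic factor and interacts cleanly with the new algebraic invariant, so that the invariant applies coordinate by coordinate. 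Once that coherent factorwise reduction is in hand, the third step is a formal consequence of the classification of relative profinite completions.
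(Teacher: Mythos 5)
Your second and third steps match the paper's strategy: the invariant $H(\cS \subset \cR)$ of an aperiodic inclusion does recover the relative profinite completion (Proposition \ref{prop:quotients-from-actions}), it is a stable orbit equivalence invariant (Proposition \ref{prop:aperiodicity-stable}), and the parameters are then read off from the classification of the completions via the scale function and the minimal index $[\rK(m,n) : \rK(m,n) \cap g \rK(m,n) g^{-1}]$. But your first step --- a factorwise product-rigidity reduction in the spirit of Monod--Shalom, Kida or Sako --- is a genuine gap. Those theorems require hypotheses on the actions that are not present here: typically each factor must act ergodically (irreducibility) and some mixing condition must hold, and one must verify that non-amenable Baumslag--Solitar groups lie in the relevant class ($\mathcal C_{\mathrm{reg}}$, bi-exactness, and so on). Theorem \ref{thm:main-result} only assumes that the product actions are free, ergodic and pmp and that the product of the canonical cyclic subgroups acts aperiodically; no irreducibility of the individual factors is assumed, so no known product rigidity theorem delivers the ``coherent stable orbit equivalence between the subrelations generated by each individual factor'' that your reduction needs. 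Relatedly, your claim that aperiodicity of $\langle a_1\rangle \times \dotsm \times \langle a_k\rangle$ descends to aperiodicity of each $\langle a_i\rangle$ ``on the factor bases'' presupposes a product decomposition of $(X,\mu)$ that is not given.

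The paper avoids this reduction entirely. It first proves a relative boundary-amenability statement (Theorem \ref{thm:relative-amenability} and Corollary \ref{cor:embedding-of-amenable-subrelations}): any amenable commensurated subequivalence relation of $\cR(G \curvearrowright X)$, for $G$ a product of non-amenable groups acting on trees with amenable stabilisers, locally embeds into the relation generated by a product of vertex stabilisers. Applying this in both directions across the stable orbit equivalence and using ergodicity and freeness (together with $2 \leq |p_j|$ to force $\langle b_j\rangle = \Stab(w_j)$), one concludes that $\varphi$ matches $\cR(\langle a_1\rangle \times \dotsm \times \langle a_k\rangle \curvearrowright X)|_U$ with $\cR(\langle b_1\rangle \times \dotsm \times \langle b_l\rangle \curvearrowright Y)|_V$ as a whole, with no factorwise splitting of the orbit equivalence. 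The invariant then yields an isomorphism of the full products $\rG(m_1,n_1) \times \dotsm \times \rG(m_k,n_k) \cong \rG(p_1,q_1) \times \dotsm \times \rG(p_l,q_l)$, and the splitting into factors is carried out afterwards at the level of totally disconnected locally compact groups in Theorem \ref{thm:main-result-on-completions}, using maximal compact subgroups, Lemma \ref{lem:recover-n-from-index}, centralisers and triviality of the centre. To repair your outline you should replace your first step by this argument; as written, the proposed reduction is unsupported.
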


When for every $i \in \{1, \dotsc, k\}$ and every $j \in \{1, \dotsc, l\}$, we have $2 \leq |m_i| < n_i$ and $2 \leq |p_j| < q_j$, the conclusion of Theorem  \ref{thm:main-result} is that $k = l$ and there is a permutation $\sigma \in \Sym(k)$ such that for every $i \in \{1, \dots, k\}$, $\BS(m_i, n_i) \cong \BS(p_{\sigma(i)}, q_{\sigma(i)})$. Note that in the case when $k = l = 1$, $2 \leq |m| < n$, $2 \leq |p| < q$, $m \nisdiv n$ and $p \nisdiv q$, Theorem \ref{thm:main-result} has already been proven in \cite[Theorem 1.4]{kida11-BS}. However, even in this particular case, our proof is new. As we will explain, the proof of Theorem  \ref{thm:main-result} relies on the classification of relative profinite completions of Baumslag-Solitar groups described in Theorem~\ref{thm:main-result-on-completions}.

Recall that since the natural subgroup $\langle a\rangle < \BS(m, n)$ is commensurated, one can associate to $\BS(m, n)$ a totally disconnected locally compact group denoted by $\rG(m, n)$ and called the {\em relative profinite completion} of $\BS(m, n)$ with respect to $\langle a\rangle$. The group $\rG(m, n)$ is obtained by taking the closure of the image of $\BS(m, n)$ in the Polish group $\Sym(\BS(m, n) / \langle a\rangle)$ when one views $\BS(m, n)$ acting by left multiplication on the set $\BS(m, n) / \langle a \rangle$.

The relative profinite completion of Baumslag-Solitar groups has recently been studied by Elder and Willis in \cite{elderwillis13}. Among other results, they showed that $\rG(m, n)$ `remembers' the rational number $|m|/n$ by computing the {\em scale function} of $\rG(m, n)$ \cite{willis94-structure}. We strengthen their results and moreover obtain a complete classification of direct products of relative profinite completions of arbitrary Baumslag-Solitar groups. We will use the following notation: for any topological groups $G$ and $H$, we write $G \cong H$ if there exists an isomorphism $\pi : G \to H$ such that $\pi$ and its inverse $\pi^{-1}$ are continuous.

\begin{thmstar}
  \label{thm:main-result-on-completions}
   Let $k, l \in \NN \setminus \{0\}$. For every $i \in \{1, \dotsc, k\}$ and every $j \in \{1, \dotsc, l\}$, let $2 \leq |m_i| \leq n_i$ and $2 \leq |p_j| \leq q_j$. Then
  \begin{equation*}
    \rG(m_1, n_1) \times \cdots \times \rG(m_k, n_k)
    \cong
    \rG(p_1, q_1) \times \cdots \times \rG(p_l, q_l)
  \end{equation*}
  if and only if $k = l$ and there is a permutation $\sigma \in \Sym(k)$ such that for every $i \in \{1, \dotsc, k\}$, we have $\rG(m_i, n_i) \cong \rG(p_{\sigma(i)}, q_{\sigma(i)})$. In this case, for every $i \in \{1, \dotsc, k\}$, we have 
\begin{itemize}
\item $|m_i| = n_i = |p_{\sigma(i)}| = q_{\sigma(i)}$, if $|m_i| = n_i$ and
\item $m_i = p_{\sigma(i)}$ and $n_i = q_{\sigma(i)}$, if $|m_i| \neq n_i$.
\end{itemize} 
\end{thmstar}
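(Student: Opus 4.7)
The ``if'' direction is immediate: a permutation $\sigma$ that matches the isomorphism classes of the factors yields a topological isomorphism of the two direct products. For the non-trivial direction I would proceed in three stages.

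\emph{Stage 1: an explicit model for $\rG(m,n)$.} The HNN presentation $\BS(m,n) = \langle a, t \amid t a^m t^{-1} = a^n\rangle$ produces an action of $\BS(m,n)$ on its Bass-Serre tree $T_{m,n}$ with vertex stabiliser $\langle a \rangle$ and valences $|m|$ and $n$. I would identify $\rG(m,n)$ with the closure of the image of $\BS(m,n)$ in $\Aut(T_{m,n})$, and pin down the compact open subgroup $K \leq \rG(m,n)$ obtained as the closure of $\langle a \rangle$. This tree model makes both invariants $|m|$ and $n$ visible as valences and gives explicit access to compact-open subgroup indices of the form $[K : K \cap g K g^{-1}]$.

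\emph{Stage 2: classification of a single factor.} Building on the Elder-Willis computation, the scale function records the unordered pair $\{|m|/\gcd(|m|,n),\, n/\gcd(|m|,n)\}$. To reach the sharp conclusion I would refine this by recovering $\gcd(|m|,n)$ and, in the non-unimodular case $|m| < n$, the sign of $m$ from the action on $T_{m,n}$. Concretely, indices $[K : K \cap tKt^{-1}]$ for the image of $t$ and orientation data on the tree should give the ordered pair $(m, n)$ when $|m| \neq n$. The unimodular case $|m| = n$ needs separate treatment because the scale is trivial; there I would argue that the only residual invariant is the common value $n$, read off from the valence of $T_{n,n}$.

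\emph{Stage 3: rigidity of the product decomposition.} Given a topological isomorphism
\begin{equation*}
  \pi \colon \rG(m_1,n_1) \times \cdots \times \rG(m_k,n_k) \lra \rG(p_1,q_1) \times \cdots \times \rG(p_l,q_l)\eqcomma
\end{equation*}
composing with inclusions and projections yields continuous homomorphisms $\pi_{ij} \colon \rG(m_i,n_i) \lra \rG(p_j,q_j)$. The plan is to show that each $\rG(m,n)$ is topologically \emph{indecomposable} (admits no non-trivial direct product decomposition as a t.d.l.c.\ group), has trivial centre, and possesses a characteristic topologically simple open subgroup or subquotient, so that for each $i$ exactly one $\pi_{ij}$ is non-trivial and any non-trivial $\pi_{ij}$ forces $\rG(m_i, n_i) \cong \rG(p_j,q_j)$. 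This produces the bijection and in particular $k = l$, and combined with Stage~2 yields the claimed form of $\sigma$.

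The hardest step, I expect, is the rigidity input of Stage 3: establishing that $\rG(m,n)$ has no ``cross-factor'' continuous homomorphisms into incompatible $\rG(p,q)$ and no non-trivial topological direct product decomposition. I would attack this by exhibiting a characteristic topologically simple piece, most plausibly the kernel of the action on the set of ends of $T_{m,n}$ or the stabiliser of the tree's canonical orientation, and using its simplicity to rule out cross-projections. Stages 1 and 2 should then be essentially a careful strengthening of the Elder-Willis framework.
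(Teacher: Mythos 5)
Your Stages 1 and 2 follow the same skeleton as the paper's single-factor classification (Theorem \ref{thm:classification-completion-Baumslag-Solitar-groups}): the tree model, the Elder--Willis scale function recovering the coprime pair $(|m_0|, n_0)$, and the minimal index $\min\{[\rK : \rK \cap g\rK g^{-1}] \amid g \notin \rK\} = |m|$ recovering $|m|$, hence $\gcd(|m|,n)$ and $n$. But the sign of $m$ is where your plan is too thin. The Bass--Serre trees of $\BS(m,n)$ and $\BS(-m,n)$ are the same biregular tree with the same orientation and the same local valence data, so ``orientation data on the tree'' cannot separate $\rG(m,n)$ from $\rG(-m,n)$. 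The paper's argument for exactly this point is the delicate part of Theorem \ref{thm:classification-completion-Baumslag-Solitar-groups}: it uses the ring structure of $\rK(m,n)$ as a profinite completion of $\ZZ$ to normalize a putative isomorphism so that $a \mapsto b$, then uses the modular function to show that $\pi(t)^{-1}u$ is balanced in the HNN normal form of $\rG(-m,n)$, and finally derives the contradiction $b^{-m^{k-1}n^{k+1}} = b^{m^{k-1}n^{k+1}}$. Nothing in your outline supplies an argument of this strength.

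Stage 3 has a more serious gap. The claim that any non-trivial $\pi_{ij}$ forces $\rG(m_i,n_i) \cong \rG(p_j,q_j)$ is false: whenever $|m| \neq n$ the modular function gives a continuous surjection $\rG(m,n) \to \ZZ$ (and $\ZZ/m\ZZ \ast \ZZ$ also surjects onto $\ZZ$), while $\ZZ$ embeds as the discrete subgroup generated by the free letter in every $\rG(p,q)$; composing gives non-trivial continuous homomorphisms between arbitrary pairs of factors. For the same reason ``exactly one $\pi_{ij}$ is non-trivial'' cannot be established as stated, and the proposed characteristic topologically simple piece is also problematic: $\rG(m,n)$ is never topologically simple (the kernel of the modular function is a proper open normal subgroup when $|m| \neq n$), and in the unimodular case the group is the discrete group $\ZZ/m\ZZ \ast \ZZ$, where no such piece is available. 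The paper sidesteps all of this with a concrete and more elementary argument: conjugate the unique-up-to-conjugacy maximal compact subgroup $K = K_1 \times \dotsm \times K_k$ onto $L$, pick the factor realizing the global minimum of the $|m_i|$ and $|p_j|$, and use the multiplicativity of $[L : L \cap \pi(g) L \pi(g)^{-1}]$ over the factors together with Lemma \ref{lem:recover-n-from-index} to force $\pi(G_1 \times K_2 \times \dotsm \times K_k) \leq H_1 \times L_2 \times \dotsm \times L_l$; centralizers and triviality of the centre (Proposition \ref{prop:trivial-centre}) then match $G_1$ with $H_1$, and an induction on $\max(k,l)$ finishes. To salvage your factor-matching strategy you would have to replace ``non-trivial homomorphism'' by something like ``component with non-compact (or open) image'' and prove the corresponding rigidity, which is essentially what the paper's index computation accomplishes.
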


The proof of Theorem \ref{thm:main-result-on-completions} exploits the natural action of $\rG(m, n)$ on the Bass-Serre tree of $\BS(m, n)$ and the calculation of Willis's scale function \cite{willis94-structure} for relative profinite completions of Baumslag-Solitar groups obtained in \cite{elderwillis13}.  It is carried out in Section \ref{sec:products-of-BS}.

Let us briefly outline the proof of Theorem \ref{thm:main-result}. In Section~\ref{sec:invariant}, to any  aperiodic inclusion of ergodic pmp equivalence relations $\cS \subset \cR$ defined on a standard probability space $(X, \mu)$, we associate a non-Archimedian Polish group $H(\mathcal S \subset \mathcal R)$ which is an SOE invariant of the inclusion $\cS \subset \cR$. The group $H(\cS \subset \cR)$ is defined to be the essential image of an appropriate index cocycle for the inclusion $\cS \subset \cR$. When the inclusion $(\cS \subset \cR) = (\mathcal R(\Lambda \curvearrowright X) \subset \mathcal R(\Gamma \curvearrowright X))$ arises from a free ergodic pmp action $\Gamma \curvearrowright  (X, \mu)$ of a discrete countable group $\Gamma$ such that the subgroup $\Lambda < \Gamma$ is commensurated and the action $\Lambda \curvearrowright (X, \mu)$ is aperiodic, we show that the group $H(\mathcal R(\Lambda \curvearrowright X) \subset \mathcal R(\Gamma \curvearrowright X))$ is isomorphic to the relative profinite completion $\Gamma \rp \Lambda$. 

In Section \ref{sec:rigidity}, we show that any pmp equivalence relation $\mathcal R$ induced by a free ergodic pmp action $\BS(m_1, n_1) \times \dotsm \times \BS(m_k, n_k) \grpaction{} (X, \mu)$ of a direct product of non-amenable Baumslag-Solitar groups `remembers' the subequivalence relation $\mathcal S$ induced by the action $\langle a_1\rangle \times \cdots \times \langle a_k \rangle \curvearrowright (X, \mu)$. Therefore, combining the results from Sections \ref{sec:invariant} and \ref{sec:rigidity}, the proof of Theorem \ref{thm:main-result} reduces to the classification result for products of relative profinite completions of Baumslag-Solitar groups, namely Theorem \ref{thm:main-result-on-completions}.

\subsection*{Acknowledgments}

We are grateful to Mathieu Carette and Stefaan Vaes for their valuable comments.

\section{Preliminaries}
\label{sec:preliminaries}

\subsection{Totally disconnected locally compact groups}
\label{sec:locally-compact-groups}

Unless otherwise stated, all homomorphisms between topological groups that appear in this paper are assumed to be continuous. When dealing with isomorphisms between topological groups, we always assume that the isomorphism and its inverse are continuous.

A locally compact group $G$ is {\em totally disconnected} if the connected component $G^0$ of the neutral element equals $\{e\}$. By \cite{vandantzig36}, $G$ contains a compact open subgroup. Following \cite{willis94-structure}, the {\em scale function} $s_G : G \ra \NN \setminus \{0\}$ is defined by
\begin{equation*}
  s_G(g) = \min \{ [K : K \cap gKg^{-1}] \amid K \leq G \text{ compact open subgroup}\}
  \eqstop
\end{equation*}
By definition, the image of the scale function is an invariant of $G$. Moreover, if $\pi : G \to H$ is an isomorphism, then $s_G(g) = s_H(\pi(g))$ for every $g \in G$. 


\subsection{Baumslag-Solitar groups}
\label{sec:Baumslag-Solitar-groups}

If $m,n \in \ZZ \setminus \{0\}$, then $\langle a, t \amid t a^m t^{-1} = a^n \rangle$ is called the {\em Baumslag-Solitar group} with parameters $(m,n)$.  There are isomorphisms between Baumslag-Solitar groups with parameters $(m,n)$, $(n,m)$, $(-m,-n)$ and $(-n,-m)$.  So we will always assume that the parameters of a Baumslag-Solitar group satisfy $1 \leq |m| \leq n$.  Using this convention, the group $\BS(m,n)$ is amenable if and only if $m \in \{1, -1\}$.

A word in $w \in \BS(m,n)$ is called {\em freely reduced} if it is of the form $w = a^{i_1}t^{i_2}a^{i_3} \dotsm a^{i_{2k - 1}} t^{i_{2k}}a^{i_{2 k + 1}}$ with $k \geq 0$, $i_2, i_3, \dotsc, i_{2k - 1}, i_{2k} \in \ZZ \setminus \{0\}$ and $i_1, i_{2k + 1} \in \ZZ$.  A freely reduced word of the above form has no {\em pinches} if for every $l \in \{1, \dots, k - 1\}$, 
\begin{itemize}
\item $i_{2l} > 0$ and $i_{2(l+1)} < 0$ implies $m \nisdiv i_{2l + 1}$ and
\item $i_{2l} < 0$ and $i_{2(l+1)} > 0$ implies $n \nisdiv i_{2l + 1}$.
\end{itemize}
Britton's Lemma \cite{britton} says that no freely reduced word without pinches represents the neutral element of $\BS(m,n)$ unless $k = 0$ and $i_1 = 0$.

The Baumslag-Solitar group $\BS(m,n)$ is the HNN-extension $\HNN(\ZZ, m \ZZ, m \mapsto n)$ and hence acts on its {\em Bass-Serre tree} \cite{serre80}. Recall that the Bass-Serre tree $T$ of $\BS(m, n)$ is defined as follows: 
\begin{equation*}
  V(T) = \BS(m, n) / \langle a\rangle \ \text{ and } \ E(T)^+ = \BS(m, n) / \langle a^m\rangle
\end{equation*}
where $V(T)$ denotes the set of vertices of $T$ and $E(T)^+$ denotes the set of positive oriented edges of $T$. The {\em source} map $s : E(T)^+ \to V(T)$ and the {\em range} map $r : E(T)^+ \to V(T)$ are defined by
\begin{equation*}
  s(g \langle a^m\rangle) = g\langle a\rangle \ \text{ and } \ r(g \langle a^m\rangle) = g t^{-1} \langle a\rangle \ \text{ for all } \ g \in \BS(m, n)
  \eqstop
\end{equation*}
Then $\BS(m,n)$ naturally acts on $T$ by orientation-preserving simplicial automorphisms and without edge inversions. The action of $\BS(m,n)$ on $T$ is moreover both vertex and edge transitive.

\subsection{Relative profinite completions}
\label{sec:relative-profinite-completions}

Relative profinite completions were introduced in \cite{schlichting80}.  The present work makes essential use of this construction.  We recall the definition of relative profinite completions and some of its properties.

\begin{definition}
  \label{def:commensurable-subgroup}
  Let $\Lambda \leq \Gamma$ be an inclusion of discrete countable groups.  We say that $\Lambda$ is {\em commensurated} by $\Gamma$ if $g\Lambda g^{-1} \cap \Lambda$ has finite index in $\Lambda$ for every $g \in \Gamma$.
\end{definition}

Such a commensurated subgroup $\Lambda \leq \Gamma$ is also called a discrete {\em Hecke pair}. Let  $\Lambda \leq \Gamma$ and denote by $\Sym(\Gamma / \Lambda)$ the Polish group of all permutations of the countable set $\Gamma / \Lambda$. Let $\tau_{\Gamma, \Lambda}: \Gamma \ra \Sym(\Gamma / \Lambda)$ be the homomorphism induced by left multiplication.  If $\Lambda$ is commensurated by $\Gamma$, the action $\Lambda \curvearrowright \Gamma/\Lambda$ has finite orbits. Since moreover $g \Lambda \cap \Lambda = \emptyset$ whenever $g \in \Gamma \setminus \Lambda$, the subgroup $\ol{\tau_{\Gamma, \Lambda}(\Lambda)}$ is compact and open in $\ol{\tau_{\Gamma, \Lambda}(\Gamma)}$.
Hence $\overline{\tau_{\Gamma, \Lambda}(\Gamma)}$ is a totally disconnected locally compact group and the inclusion $\ol{\tau_{\Gamma, \Lambda}(\Lambda)} \leq \overline{\tau_{\Gamma, \Lambda}(\Gamma)}$ has countable index.

\begin{definition}
  \label{def:relative-profinite-completion}
  If $\Lambda \leq \Gamma$ is a commensurated subgroup, then $\ol{\tau_{\Gamma, \Lambda}(\Gamma)} = \Gamma \rp \Lambda$ is the {\em relative profinite completion} of $\Gamma$ with respect to $\Lambda$.
\end{definition}

Note that the relative profinite completion is a generalisation of a quotient of groups.  Indeed we have $\ker(\tau_{\Gamma, \Lambda}) \leq \Lambda$ and  $\Gamma / \ker(\tau_{\Gamma, \Lambda})$ injects into $\Gamma \rp \Lambda$. Note that $\Gamma \rp \Lambda$ is not strictly speaking a completion of $\Gamma$ unless $\ker(\tau_{\Gamma, \Lambda})$ is the trivial group.

We need the following two propositions about compatibility of relative profinite completions with respect to products and HNN-extensions.
\begin{proposition}
  \label{prop:relative-profinite-completions-of-products}
 Let $n \in \NN \setminus \{0\}$. For every $i \in \{1, \dotsc, n\}$, let $\Lambda_i \leq \Gamma_i$ be commensurated subgroups.  Then $\prod_i \Lambda_i \leq \prod_i \Gamma_i$ is commensurated and $(\prod_i \Gamma_i) \rp (\prod_i \Lambda_i) \cong \prod_i (\Gamma_i \rp \Lambda_i)$.
\end{proposition}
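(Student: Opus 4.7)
The plan is to reduce the statement to the standard fact that in a product of topological spaces equipped with the product topology, the closure of a box $\prod_i A_i$ is $\prod_i \overline{A_i}$. The commensuration statement is straightforward algebra: for $(g_1, \dotsc, g_n) \in \prod_i \Gamma_i$, one has
\begin{equation*}
  (g_1, \dotsc, g_n)\bigl(\prod_i \Lambda_i\bigr)(g_1, \dotsc, g_n)^{-1} \cap \prod_i \Lambda_i \;=\; \prod_i (g_i \Lambda_i g_i^{-1} \cap \Lambda_i)
\end{equation*}
and the right hand side has index $\prod_i [\Lambda_i : g_i \Lambda_i g_i^{-1} \cap \Lambda_i] < \infty$ in $\prod_i \Lambda_i$ by the commensuration hypothesis on each factor.

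For the topological group isomorphism, I first identify the sets $(\prod_i \Gamma_i)/(\prod_i \Lambda_i)$ and $\prod_i (\Gamma_i/\Lambda_i)$ in the obvious way. The next step is to set up the natural homomorphism
\begin{equation*}
  \Phi \colon \prod_i \Sym(\Gamma_i/\Lambda_i) \lra \Sym\bigl(\prod_i \Gamma_i / \prod_i \Lambda_i\bigr)
  \eqcomma
  \quad
  \Phi(\sigma_1,\dotsc,\sigma_n)(g_1\Lambda_1,\dotsc,g_n\Lambda_n) = (\sigma_1(g_1\Lambda_1),\dotsc,\sigma_n(g_n\Lambda_n))
  \eqstop
\end{equation*}
I would verify that $\Phi$ is an injective group homomorphism, continuous for the pointwise convergence topology on both sides (by evaluating at finitely many points), and a homeomorphism onto its image (since convergence on the fibers $\{g_1\Lambda_1\} \times \dotsm \times (\Gamma_k/\Lambda_k) \times \dotsm \times \{g_n\Lambda_n\}$ forces coordinatewise convergence). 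A short argument also shows that the image of $\Phi$ is closed: being of product form is the closed condition that for each $i$ and each pair of cosets differing only in a coordinate $j \neq i$, their images under $\sigma$ have equal $i$-th coordinate.

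Under this embedding, the image of $\tau_{\prod_i \Gamma_i, \prod_i \Lambda_i}$ is precisely the box $\prod_i \tau_{\Gamma_i, \Lambda_i}(\Gamma_i)$ sitting inside $\prod_i \Sym(\Gamma_i/\Lambda_i)$. Since $\Phi$ is a topological embedding with closed image, taking closure in $\Sym(\prod_i \Gamma_i / \prod_i \Lambda_i)$ is the same as taking closure in $\prod_i \Sym(\Gamma_i/\Lambda_i)$. The standard product-topology fact then gives
\begin{equation*}
  \overline{\prod_i \tau_{\Gamma_i, \Lambda_i}(\Gamma_i)} \;=\; \prod_i \overline{\tau_{\Gamma_i, \Lambda_i}(\Gamma_i)} \;=\; \prod_i (\Gamma_i \rp \Lambda_i)
  \eqcomma
\end{equation*}
and the resulting isomorphism is a topological group isomorphism because $\Phi$ is.

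The routine part is the purely group-theoretic identification of cosets and images. The only place requiring a little care is showing that $\Phi$ is a topological embedding with closed image, so that closure taken in the ambient symmetric group agrees with closure taken in the product; I expect this to be the main (but mild) obstacle.
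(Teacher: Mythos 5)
Your proof is correct and follows essentially the same route as the paper, whose entire argument is the one-line observation that the canonical bijection $(\prod_i \Gamma_i)/(\prod_i \Lambda_i) \cong \prod_i(\Gamma_i/\Lambda_i)$ intertwines the two completions. You have simply written out the details that the paper leaves implicit, namely that the induced embedding $\Phi$ of $\prod_i \Sym(\Gamma_i/\Lambda_i)$ into $\Sym\bigl(\prod_i \Gamma_i/\prod_i\Lambda_i\bigr)$ is a closed topological embedding, so that the closure of the box $\prod_i \tau_{\Gamma_i,\Lambda_i}(\Gamma_i)$ may be computed coordinatewise.
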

\begin{proof}
  The canonical bijection $(\prod_i \Gamma_i)/ (\prod_i \Lambda_i) \cong \prod_i (\Gamma_i / \Lambda_i)$ intertwines $(\prod_i \Gamma_i) \rp (\prod_i \Lambda_i)$ and $\prod_i (\Gamma_i \rp \Lambda_i)$.
\end{proof}

If $m, n \in \ZZ \setminus \{0\}$, then the natural subgroup $\langle a \rangle \leq \BS(m,n)$ is commensurated. We denote by $\rG(m,n)$ the relative profinite completion of $\BS(m,n)$ with respect to $\langle a \rangle$ and by $\rK(m,n)$ its natural compact open subgroup $\ol{\langle a \rangle}$. Denote by $T$ the Bass-Serre tree of $\BS(m, n)$. Observe that since the action of $\BS(m, n)$ on $\BS(m, n)/\langle a\rangle$ preserves adjacency, we have $\tau_{\BS(m, n), \langle a\rangle}(\BS(m, n)) < \Aut(T)$ and $\rG(m, n)$ coincides with the closure of $\tau_{\BS(m, n), \langle a\rangle}(\BS(m, n))$ in $\Aut(T)$. From now on, we will always regard $\rG(m, n)$ as a closed subgroup of $\Aut(T)$.

If $|m| = n$, then $\langle a^m\rangle$ is a normal subgroup in $\BS(m, n)$ and it has index equal to $m$ in $\langle a\rangle$. In that case, $\rG(m, n)$ is a discrete subgroup of $\Aut(T)$ and we have $\rG(m, n) \cong \ZZ/m\ZZ \ast \ZZ$ and $\rK(m, n) \cong \ZZ/m\ZZ$.

If $|m| \neq n$, then $\ker(\tau_{\BS(m, n), \langle a\rangle})$ is trivial. In that case, we regard $\BS(m, n)$ as a dense subgroup of $\rG(m,n)$. Moreover, $\rG(m, n)$ is not discrete.

We make use of the following proposition identifying $\rG(m,n)$ as an HNN-extension.  The HNN-extension of a locally compact group $L$ with open subgroups $H, K$ and an isomorphism $\vphi:H \ra K$ is by definition the abstract HNN-extension $\HNN(L, H, \vphi)$ equipped with the unique group topology defined by declaring the inclusion $L \hra \HNN(L, H, \vphi)$ an open homeomorphism onto its image.  It is easy to check that topological HNN-extensions in this sense satisfy the usual universal property of HNN-extensions with respect to continuous homomorphisms.

Write $m \rK(m, n) = \overline{\langle a^m \rangle}$ and $n \rK(m, n) = \overline{\langle a^n\rangle}$.

\begin{proposition}
  \label{prop:relative-profinite-completion-of-HNN}
 Let $1 \leq  |m| \leq n$. Then the inclusion $\rK(m, n) \leq \rG(m,n)$ is isomorphic to the inclusion $\rK(m, n) \leq \HNN(\rK(m,n), m \rK(m,n), m \mapsto n)$. 
\end{proposition}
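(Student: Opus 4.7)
The idea is to construct a continuous homomorphism $\pi: H \to \rG(m,n)$ from the topological HNN-extension $H := \HNN(\rK(m,n), m\rK(m,n), m \mapsto n)$ via the universal property, then show $\pi$ is a topological isomorphism restricting to the identity on $\rK(m,n)$. Since $\tau_{\BS(m,n), \langle a \rangle}(\BS(m,n))$ is dense in $\rG(m,n)$ and conjugation is continuous, the Baumslag-Solitar relation $t a^m t^{-1} = a^n$ upgrades to the equality $t \cdot m\rK(m,n) \cdot t^{-1} = n\rK(m,n)$ in $\rG(m,n)$, with the induced topological isomorphism $m\rK(m,n) \to n\rK(m,n)$ matching $a^m \mapsto a^n$ on the dense cyclic subgroups. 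Combining this with the inclusion $\rK(m,n) \hookrightarrow \rG(m,n)$, the universal property of the topological HNN-extension produces the desired $\pi$, which is the identity on $\rK(m,n)$ and sends the stable letter to $t$.

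Surjectivity is then immediate: $\pi(H)$ is a subgroup containing the open subgroup $\rK(m,n)$ and the element $t$, hence contains $\tau_{\BS(m,n), \langle a\rangle}(\BS(m,n))$; being open and dense, $\pi(H) = \rG(m,n)$. Continuity of $\pi^{-1}$ (once injectivity is known) is also easy to handle: $\pi$ restricts to the identity on the compact open subgroup $\rK(m,n) \subset H$ and lands on the compact open subgroup $\rK(m,n) \subset \rG(m,n)$, so $\pi$ is a local homeomorphism at the identity and, being a homomorphism, a homeomorphism everywhere.

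The main obstacle is injectivity, and I would handle it by exploiting the faithful action of $\rG(m,n)$ on the Bass-Serre tree $T$ of $\BS(m,n)$. Through $\pi$, the group $H$ acts on $T$; the vertex group $\rK(m,n) \subset H$ fixes the base vertex $\langle a \rangle$, and the stable letter sends $\langle a \rangle$ to an adjacent vertex across the base edge. Using the topological analogue of Britton's lemma in $H$ (which is available precisely because $m\rK(m,n) \cong n\rK(m,n)$ under the given identification, so pinches can be removed exactly as in the discrete setting), every element admits a reduced expression $h = k_0 t^{\epsilon_1} k_1 \cdots t^{\epsilon_\ell} k_\ell$ with $k_i \in \rK(m,n)$ and no subword $t k t^{-1}$ with $k \in m\rK(m,n)$, nor $t^{-1} k t$ with $k \in n\rK(m,n)$. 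Following the action of $\pi(h)$ on $\langle a \rangle$ step by step then traces out a non-backtracking path in $T$ of combinatorial length $\ell$. Hence $\pi(h) = e$ forces $\ell = 0$, in which case $h \in \rK(m,n)$ and $h = \pi(h) = e$. Combining the three steps yields a topological isomorphism carrying the inclusion $\rK(m,n) \leq H$ onto the inclusion $\rK(m,n) \leq \rG(m,n)$, as required.
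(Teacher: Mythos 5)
Your strategy is genuinely different from the paper's: the paper applies the structure theorem of Bass--Serre theory directly to the vertex- and edge-transitive action of $\rG(m,n)$ on $T$, obtaining $\rG(m,n)\cong\HNN(\rK(m,n),\rK(m,n)\cap t^{-1}\rK(m,n)t,\Ad t)$ in one stroke, and only afterwards identifies the edge stabiliser with $m\rK(m,n)$; you instead build a map $\pi$ out of the topological HNN-extension via the universal property and verify bijectivity by hand. Your construction of $\pi$, the surjectivity argument, and the argument for continuity of $\pi^{-1}$ are all fine.

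There is, however, a genuine gap at the injectivity step. Writing $\rK=\rK(m,n)$, your non-backtracking claim needs that every $k\in\rK\setminus m\rK$ moves the vertex $t^{-1}\langle a\rangle$ and every $k\in\rK\setminus n\rK$ moves $t\langle a\rangle$, i.e.\ the identities
\begin{equation*}
  \Stab_{\rK}(t^{-1}\langle a\rangle)=\rK\cap t^{-1}\rK t=m\rK
  \quad\text{and}\quad
  \Stab_{\rK}(t\langle a\rangle)=\rK\cap t\rK t^{-1}=n\rK
  \eqstop
\end{equation*}
Density of $\langle a^m\rangle$ in $m\rK$ only gives the inclusions $m\rK\subseteq\rK\cap t^{-1}\rK t$ and $n\rK\subseteq\rK\cap t\rK t^{-1}$, which point the wrong way for your argument. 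This is not a removable technicality: if there existed $k\in(\rK\cap t\rK t^{-1})\setminus n\rK$, then the word $t^{-1}kt$ would be Britton-reduced in $H$ (hence not in the vertex group of $H$), yet $\pi(t^{-1}kt)$ would lie in $\rK\leq\rG(m,n)$; setting $k'=\pi(t^{-1}kt)\in\rK\subset H$, the element $(t^{-1}kt)(k')^{-1}$ would be a nontrivial element of $\ker\pi$. So the reverse inclusions must be proved, and they are exactly the index computation the paper carries out: $\rK$ maps $t^{-1}\langle a\rangle$ onto the $|m|$-element set $\{a^{i}t^{-1}\langle a\rangle\amid 0\leq i<|m|\}$, so $[\rK:\rK\cap t^{-1}\rK t]=|m|$, while $[\rK:m\rK]\leq|m|$, forcing equality (and similarly for $n$). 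With that lemma inserted, your ping-pong argument goes through and yields a complete alternative proof.
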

\begin{proof}
 The result is clear when $|m| = n$ since in that case, we have $\rK(m, n) = \ZZ/m\ZZ$ and $\rG(m, n) = \ZZ/m\ZZ \ast \ZZ$.
 
 We assume that $|m| \neq n$. Let $T$ be the Bass-Serre tree of $\BS(m,n)$. Then $\rK(m,n) = \Stab_{\rG(m,n)}(\langle a \rangle)$ and $\rK(m,n) \cap t^{-1} \rK(m,n) t = \Stab_{\rG(m, n)}(\langle a^m\rangle)$. Observe that the action of $\rG(m, n)$ on $T$ is both vertex and edge transitive and $\rG(m,n) = \langle \rK(m,n) , t \rangle$. Then Bass-Serre theory implies that $\rG(m,n) \cong \HNN(\rK(m,n), \rK(m,n) \cap t^{-1} \rK(m,n) t, \Ad t)$ as abstract groups.  In order to see that this isomorphism and its inverse are continuous, it suffices to note that it is the identity on the open subgroup $\rK(m,n)$.

We have $m \rK(m,n) \subset \rK(m,n) \cap t^{-1} \rK(m,n) t$, since $\langle a^m \rangle$ is dense in $m\rK(m,n)$.  On the other hand, $\rK(m,n)$ acts transitively on $\{ t^{-1} \langle a \rangle, \dotsc, a^{m-1} t^{-1} \langle a \rangle \}$ and $\rK(m,n) \cap t^{-1} \rK(m,n) t = \Stab_{\rK(m, n)}(t^{-1} \langle a \rangle)$.  So $[\rK(m,n) : \rK(m,n) \cap t^{-1} \rK(m,n) t] = |m|$. Since moreover $[\rK(m, n) : m\rK(m, n)] \leq |m|$, we get that $\rK(m,n) \cap t^{-1} \rK(m,n) t = m \rK(m,n)$.  Similarly, one finds that $\rK(m,n) \cap t \rK(m,n) t^{-1} = n \rK(m,n)$.  Since $(\Ad t)|_{\langle a^m \rangle}$ equals the map $m \mapsto n$, continuity shows that $\rG(m,n) \cong \HNN(\rK(m,n), m \rK(m,n), m \mapsto n)$.
\end{proof}

Note that $\rK(m, n)$ is a profinite completion of the integer ring $\ZZ$ whose description can be found in \cite[Proposition 8.1]{elderwillis13}.

\subsection{Probability measure preserving equivalence relations}
\label{sec:pmp-equivalence-relations}

We recall some notions regarding probability measure preserving (pmp) equivalence relations. Unless otherwise stated, all sets and maps that appear in this paper are assumed to be Borel, and relations among Borel sets and maps are understood to hold, up to sets of measure zero.

Let $(X, \mu)$ be a standard probability space. A {\em probability measure preserving countable Borel equivalence relation} $\cR$ defined on the space $(X, \mu)$ is an equivalence relation such that the following properties hold:
\begin{itemize}
\item $\cR \subset X \times X$ is a Borel subset.
\item For $\mu$-almost every $x \in X$, the $\cR$-equivalence class $[x]_\cR$ is countable.
\item For every $\varphi \in [\cR]$, we have $\varphi_\ast \mu = \mu$.
\end{itemize}
In this paper, we simply say that $\mathcal R$ is a pmp equivalence relation. We denote by $[\cR]$ the {\em full group} of all Borel automorphisms $\varphi : X \to X$ such that $\graph(\varphi) \subset \mathcal R$. We also denote by $[[\cR]]$ the {\em full pseudogroup} of all partial Borel isomorphisms $\varphi : U \to V$ such that $\graph(\varphi) \subset \mathcal R$. We say that $\mathcal R$ is {\em ergodic} if every $\mathcal R$-invariant Borel subset $U \subset X$ has measure $0$ or $1$.

Following \cite{feldmanmoore77}, we define the $\sigma$-finite Borel measure $\nu$ on $\mathcal R$ in the following way. For every Borel subset $\mathcal V \subset \cR$, put
\begin{equation*}
  \nu(\mathcal V) = \int_X |\{y \in X \amid (x, y) \in \mathcal V\}| \, {\rm d}\mu(x)
  \eqstop
\end{equation*}
Since $\cR$ is assumed to be pmp, we also have $\nu(\mathcal V) = \int_X |\{x \in X \amid (x, y) \in \mathcal V\}| \, {\rm d}\mu(y)$. Then $(\mathcal R, \nu)$ is a standard measure space.

Likewise, define the Borel subset $\cR^{(2)} = \{(x, y, z) \in X \times X \times X \amid (x, y) \ \text{and} \ (y, z) \in \mathcal R\}$. We can then define the $\sigma$-finite Borel measure $\nu^{(2)}$ on $\cR^{(2)}$ in a similar way. For every Borel subset $\mathcal W \subset \cR^{(2)}$, put
\begin{equation*}
  \nu^{(2)}(\mathcal W) = \int_X |\{(y, z) \in \mathcal R \amid (x, y, z) \in \mathcal W\}| \, {\rm d}\mu(x)
  \eqstop
\end{equation*}
Since $\cR$ is assumed to be pmp, we also have
\begin{equation*}
  \nu^{(2)}(\mathcal W) = \int_X |\{(x, z) \in \mathcal R \amid (x, y, z) \in \mathcal W\}| \, {\rm d}\mu(y)  = \int_X |\{(x, y) \in \mathcal R \amid (x, y, z) \in \mathcal W\}| \, {\rm d}\mu(z)
  \eqstop
\end{equation*}
Then $(\mathcal R^{(2)}, \nu^{(2)})$ is a standard measure space.

For every non-negligible Borel subset $U \subset X$, put $\mu_{U} = \frac{1}{\mu(U)} \mu|_{U}$ and observe that $(U, \mu_{U})$ is a standard probability space. Define $\mathcal R |_{U} = \mathcal R \cap (U \times U)$. Then $\mathcal R |_{U}$ is a pmp equivalence relation defined on $(U, \mu_{U})$. Let now $\mathcal S$ be another pmp equivalence relation defined on a standard probability space $(Y, \eta)$.
We say that $\mathcal R$ and $\mathcal S$ are 
\begin{itemize}
\item {\em orbit equivalent} if there exists a pmp Borel isomorphism $\varphi : (X, \mu) \to (Y, \eta)$ such that ${(\varphi \times \varphi)(\cR)} = \cS$, and
\item {\em stably orbit equivalent} if there exist non-negligible Borel subsets $U \subset X$, $V \subset Y$ and a pmp Borel isomorphism $\varphi : (U, \mu_U) \to (V, \eta_V)$ such that $(\varphi \times \varphi)(\cR |_U) = \cS |_V$.
\end{itemize}

If $\Gamma \grpaction{} (X, \mu)$ is a pmp action of a discrete countable group, the {\em orbit equivalence relation}  $\cR(\Gamma \grpaction{} X)$ is defined by 
\begin{equation*}
  (x, y) \in \cR(\Gamma \grpaction{} X) \ \text{if and only if there exists} \ g \in \Gamma \ \text{such that} \ y = g \cdot x
  \eqstop
\end{equation*}
Then $\cR(\Gamma \grpaction{} X)$ is ergodic if and only if $\Gamma$ acts ergodically on $(X, \mu)$.

\begin{definition}[\cite{gromov93-asymptotic-invariants, furman99}]
  \label{def:ME}
  Two discrete countable groups $\Gamma$ and $\Lambda$ are called {\em measure equivalent} if there exist free ergodic pmp actions $\Gamma \grpaction{} (X, \mu)$ and $\Lambda \grpaction{} (Y, \eta)$ such that $\cR(\Gamma \grpaction{} X)$ and $\cR(\Lambda \grpaction{} Y)$ are stably orbit equivalent.
\end{definition}

Let $\cS \subset \cR$ be an inclusion of pmp equivalence relations.  We say that $\cS \subset \cR$ has {\em finite index} if almost every $\cR$-class is a finite union of $\cS$-classes. If the inclusion $\cS \subset \cR$ has finite index, then for every non-negligible Borel subset $U \subset X$, the inclusion $\cS |_{U} \subset \cR |_{U}$ has finite index as well. For every $\varphi \in [[\cR]]$, denote by $\dom(\varphi)$ the {\em domain} of $\varphi$ and by $\rng(\varphi)$ the {\em range} of $\varphi$.

\begin{definition}
 \label{def:commensurable}
Let $\cS \subset \cR$ be an inclusion of pmp equivalence relations defined on the standard probability space $(X, \mu)$. The {\em quasi-normaliser} of $\mathcal S$ inside $\mathcal R$, denoted by $  \QN_\cR(\cS)$, is defined as the set of all $\varphi \in [[\cR]]$ such that both of the inclusions
\begin{equation*}
  (\vphi \times \vphi)(\cS |_{\dom(\varphi)}) \cap \cS |_{\rng(\varphi)} \subset \cS |_{\rng(\varphi)} \ \text{and} \ (\vphi^{-1} \times \vphi^{-1})(\cS |_{\rng(\varphi)}) \cap \cS |_{\dom(\varphi)} \subset \cS |_{\dom(\varphi)}
\end{equation*}
have finite index. We then say that $\cS$ is {\em commensurated} by $\cR$ if $\QN_{\cR}(\cS)$ generates the equivalence relation $\cR$.
\end{definition}

If $\Lambda \leq \Gamma$ is a commensurated subgroup of a discrete countable group and $\Gamma \curvearrowright (X, \mu)$ is a pmp action, the subequivalence relation $\mathcal R(\Lambda \curvearrowright X) \subset \mathcal R(\Gamma \curvearrowright X)$ is commensurated.

If the subequivalence relation $\cS \subset \cR$ is commensurated, then for every non-negligible Borel subset $U \subset X$, the subequivalence relation $\cS |_{U} \subset \cR |_{U}$ is commensurated as well, by \cite[Lemma 3.18]{kida11-BS}.

\section{The relative profinite completion of Baumslag-Solitar groups}
\label{sec:products-of-BS}

In this section, we prove several results on the structure of relative profinite completions of Baumslag-Solitar groups and their products. 

\begin{proposition}\label{prop:conjugation-compact-subgroups}
  Let $1 \leq |m| \leq n$ and $H \leq \rG(m, n)$ be any compact subgroup. Then there exists $g \in \rG(m, n)$ such that $g H g^{-1} \subset \rK(m,n)$.
\end{proposition}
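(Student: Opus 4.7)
The plan is to use the embedding $\rG(m,n) \hookrightarrow \Aut(T)$, where $T$ is the Bass-Serre tree of $\BS(m,n)$. Since $\rK(m,n) = \Stab_{\rG(m,n)}(\langle a \rangle)$, the containment $gHg^{-1} \subset \rK(m,n)$ amounts to exhibiting a vertex of $T$ fixed by $gHg^{-1}$. Because $\rG(m,n)$ acts transitively on $V(T)$ (being the closure of the vertex-transitive action of $\BS(m,n)$), it suffices to find any vertex $v$ fixed by $H$ and then choose $g \in \rG(m,n)$ with $g \cdot v = \langle a \rangle$. Thus the entire content of the proposition reduces to the fixed-vertex problem for $H \curvearrowright T$.

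First I would show that $H$ acts on $T$ with finite orbits. Indeed, $\rK(m,n)$ is compact and open in $\rG(m,n)$, and by vertex-transitivity all vertex stabilizers in $\rG(m,n)$ are conjugate to $\rK(m,n)$, hence open. Consequently the action $\rG(m,n) \curvearrowright V(T)$ is continuous when $V(T)$ is equipped with the discrete topology, so each $H$-orbit is a continuous image of the compact group $H$ in a discrete space, and must therefore be finite.

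Next I would invoke Serre's fixed-point theorem: a group acting on a tree without edge inversions and with a bounded orbit fixes a vertex. To verify the no-inversion hypothesis for $\rG(m,n)$, note that $\BS(m,n)$ acts on $T$ without edge inversions by the HNN-extension description recalled in Section~\ref{sec:Baumslag-Solitar-groups}; equivalently, its image in the permutation group of $V(T) \sqcup E(T)^+$ preserves the decomposition into $V(T)$ and $E(T)^+$, which is a closed condition in the permutation topology. Since $\rG(m,n)$ is the closure of that image in $\Aut(T)$, it also acts without edge inversions, and Serre's theorem produces the required fixed vertex $v$.

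The main obstacle is really just ensuring the no-inversion condition: without it, a compact subgroup could fix the midpoint of an edge while swapping its endpoints and no vertex would be fixed. Everything else is topological bookkeeping about stabilizers, continuity of the action on $V(T)$, and closedness of orientation-preservation. The argument runs uniformly in the two cases: when $|m| \neq n$, $\rG(m,n)$ is non-discrete and the tree is $(|m|+n)$-regular; when $|m| = n$, $\rG(m,n) \cong \ZZ/m\ZZ * \ZZ$ is discrete, $H$ is automatically finite, and the statement specializes to the Kurosh-type fact that finite subgroups of this free product are conjugate into the $\ZZ/m\ZZ$-factor $\rK(m,n)$.
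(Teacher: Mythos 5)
Your proposal is correct and follows essentially the same route as the paper: reduce to finding an $H$-fixed vertex of the Bass-Serre tree, note that compactness forces finite orbits, and conclude via the bounded-orbit fixed-point argument for trees, with the no-edge-inversion property passing from $\BS(m,n)$ to its closure $\rG(m,n)$. The only difference is that you cite Serre's fixed-point theorem where the paper writes out the standard shrinking-subtree argument explicitly.
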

\begin{proof}
 Let $T$ be the Bass-Serre tree of $\BS(m,n)$ and $H \leq \rG(m,n)$ a compact subgroup of $\rG(m,n)$.  We claim that $H$ fixes some vertex of $T$.  If this is the case, then we can conjugate $H$ by an element $g \in \rG(m,n)$ such that $g H g^{-1}$ stabilises $\langle a \rangle$, since the action of $\rG(m, n)$ on $V(T)$ is transitive. This will finish the proof.

Let $v \in V(T)$ be any vertex. Since $H$ is compact, the orbit $Hv$ is necessarily finite. Denote by $S \subset T$ the smallest subtree whose set of vertices contains the orbit $Hv$.  Since $Hv$ is finite, $S$ is a finite subtree of $T$.  It is moreover invariant under the action of $H$, since $H$ acts by simplicial automorphisms of $T$.  Denote the boundary of $S$ by $\partial S$, that is the set of vertices in $V(S)$ that have only one neighbour in $S$.  Then $\partial S$ is also invariant under the action of $H$.  It follows that $V(S) \setminus \partial S$ is invariant under $H$ and so is the subtree $S' \subset S$ whose set of vertices is $V(S') = V(S) \setminus \partial S$.  Replacing $S$ by $S'$ repeatedly, we can assume that $S$ is either a single vertex or an edge with its two endpoints.  In the first case we are done.  If $H$ stabilises an edge, then it either contains an edge inversion or it fixes both endpoints of the edge pointwise. Since $\BS(m,n)$ acts without edge inversions on $T$, also $\rG(m,n)$ acts without edge inversions on $T$.  We have shown that $H$ stabilises some vertex of $T$, finishing the proof of the proposition.
\end{proof}

\begin{corollary}
  \label{cor:no-compact-normal-subgroups}
 Let $1 \leq |m| \leq n$. Then $\rG(m,n)$ has no non-trivial compact normal subgroup.
\end{corollary}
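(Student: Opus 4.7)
The plan is to show that any compact normal subgroup of $\rG(m,n)$ must lie in the intersection of all conjugates of $\rK(m,n)$, and then to identify that intersection as the kernel of the action of $\rG(m,n)$ on the Bass-Serre tree $T$, which is trivial by faithfulness.

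Concretely, I would let $N \trianglelefteq \rG(m,n)$ be a compact normal subgroup. By Proposition~\ref{prop:conjugation-compact-subgroups} there exists $g \in \rG(m,n)$ with $gNg^{-1} \subset \rK(m,n)$; since $N$ is normal, $gNg^{-1} = N$, so $N \subset \rK(m,n)$. Conjugating this inclusion by an arbitrary $h \in \rG(m,n)$ and using normality again yields $N = h N h^{-1} \subset h\rK(m,n)h^{-1}$ for every $h$, so
\begin{equation*}
N \subset \bigcap_{h \in \rG(m,n)} h\rK(m,n) h^{-1}.
\end{equation*}

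Next I would identify this intersection with the kernel of the action of $\rG(m,n)$ on $T$. Since $\rK(m,n) = \Stab_{\rG(m,n)}(\langle a \rangle)$, the conjugate $h\rK(m,n)h^{-1}$ is the stabiliser of the vertex $h \cdot \langle a \rangle \in V(T)$. Because $\rG(m,n)$ acts vertex-transitively on $T$, the intersection above equals the pointwise stabiliser of $V(T)$. Now $\rG(m,n)$ acts on $T$ without edge inversions, so an element fixing every vertex must also fix every edge and therefore acts as the identity on $T$. Since $\rG(m,n)$ is by construction a closed subgroup of $\Aut(T)$, the action is faithful, which forces this pointwise stabiliser to be $\{e\}$. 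Hence $N = \{e\}$.

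The substantive input is Proposition~\ref{prop:conjugation-compact-subgroups}; the only remaining point requiring care is the passage from "fixes all vertices" to "acts trivially on $T$", which is precisely where the absence of edge inversions, together with the faithful embedding $\rG(m,n) \hra \Aut(T)$ from Section~\ref{sec:relative-profinite-completions}, is used.
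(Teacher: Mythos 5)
Your argument is correct and follows essentially the same route as the paper: apply Proposition~\ref{prop:conjugation-compact-subgroups} together with normality to place $N$ inside $\rK(m,n)$, then use vertex transitivity and normality again to conclude that $N$ fixes every vertex, hence is trivial because $\rG(m,n)$ is by construction a subgroup of $\Sym(V(T))$. The only cosmetic difference is your detour through edge inversions, which is not needed here: in a simplicial tree an element fixing both endpoints of an edge already fixes that edge, and in any case triviality follows directly from the faithfulness of the action on vertices.
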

\begin{proof}
Denote by $T$ the Bass-Serre tree of $\BS(m,n)$ and let $H \lhd \rG(m,n)$ be a compact normal subgroup. By Proposition~\ref{prop:conjugation-compact-subgroups}, there is $g \in \rG(m,n)$ such that $H =  g H g^{-1} \subset \rK(m, n)$. Hence $H$ fixes $\langle a \rangle \in T$.  Since $H$ is normal in $\rG(m,n)$ and since $\rG(m, n)$ acts transitively on $V(T)$, we obtain that $H$ fixes $V(T)$ pointwise and so $H$ is the trivial group.
\end{proof}

The following lemma will turn out to be useful in the complete classification of relative profinite completions of products of Baumslag-Solitar groups.
\begin{lemma}
  \label{lem:recover-n-from-index}
  Let $1 \leq |m| \leq n$.  Then
  \begin{equation*}
    \min \bigl \{[\rK(m,n) : \rK(m,n) \cap g \rK(m,n) g^{-1}] \amid g \in \rG(m,n) \setminus \rK(m,n) \bigr \} = |m|
    \eqstop
  \end{equation*}
\end{lemma}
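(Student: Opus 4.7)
The plan is to use the action of $\rG(m,n)$ on the Bass-Serre tree $T$ of $\BS(m,n)$, together with the HNN-presentation provided by Proposition~\ref{prop:relative-profinite-completion-of-HNN}. Throughout, write $v_0 = \langle a \rangle \in V(T)$, so that $\rK(m,n) = \Stab_{\rG(m,n)}(v_0)$ and $g \rK(m,n) g^{-1} = \Stab_{\rG(m,n)}(gv_0)$ for every $g \in \rG(m,n)$; in particular $g \in \rG(m,n) \setminus \rK(m,n)$ if and only if $gv_0 \neq v_0$.

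For the \emph{upper bound} I would take $g = t^{-1} \notin \rK(m,n)$. Proposition~\ref{prop:relative-profinite-completion-of-HNN} directly yields $\rK(m,n) \cap t^{-1} \rK(m,n) t = m\rK(m,n)$, whose index in $\rK(m,n)$ equals $|m|$. For the \emph{lower bound}, fix any $g \in \rG(m,n) \setminus \rK(m,n)$ and let $v_0 = w_0, w_1, \dotsc, w_k = gv_0$ be the unique geodesic from $v_0$ to $gv_0$ in $T$, with $k \geq 1$. Any $h \in \rK(m,n) \cap g\rK(m,n)g^{-1}$ fixes both endpoints of this geodesic; uniqueness of geodesics in a tree forces $h$ to map the path to itself, and induction on $i$ starting from $w_0$ then shows $h$ fixes each $w_i$ pointwise. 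In particular,
\begin{equation*}
  \rK(m,n) \cap g\rK(m,n)g^{-1} \leq \Stab_{\rG(m,n)}(v_0) \cap \Stab_{\rG(m,n)}(w_1)
  \eqstop
\end{equation*}

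The main point is then to compute the index of the right-hand side. The edge $e$ of $T$ joining $v_0$ and $w_1$ lies in $E(T)^+$ with either $s(e) = v_0$ or $r(e) = v_0$. If $s(e) = v_0$, then edge-transitivity of $\rG(m,n)$ places $e$ in the orbit of the standard edge $\langle a^m \rangle$, whose stabilizer is $\rK(m,n) \cap t^{-1}\rK(m,n)t = m\rK(m,n)$; so $\Stab(v_0) \cap \Stab(w_1)$ is $\rK(m,n)$-conjugate to $m\rK(m,n)$ and has index $|m|$ in $\rK(m,n)$. If instead $r(e) = v_0$, the analogous argument using $\rK(m,n) \cap t\,\rK(m,n)\,t^{-1} = n\rK(m,n)$ from the proof of Proposition~\ref{prop:relative-profinite-completion-of-HNN} gives index $n$. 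Because $|m| \leq n$, the index is at least $|m|$ in both cases, which combined with the upper bound gives the equality. The only slightly delicate step is the induction showing that elements fixing both endpoints of a finite geodesic in $T$ fix it pointwise, but this is purely tree-combinatorial and requires no further computation.
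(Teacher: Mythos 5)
Your proof is correct, and it takes a genuinely different route from the paper's. The paper first establishes $\min\{[\langle a\rangle : \langle a\rangle \cap g\langle a\rangle g^{-1}] \amid g \in \BS(m,n)\setminus\langle a\rangle\} = |m|$ by a word-combinatorial argument (Britton's lemma: for $g$ reduced and starting with $t^{\pm1}$, the words $g^{-1}a^jg$ with $1 \leq j \leq |m|-1$ have no pinches, so the orbit $\langle a\rangle\cdot(g\langle a\rangle)$ has at least $|m|$ elements), and then transfers this to $\rG(m,n)$ by noting that $g\mapsto[\rK(m,n):\rK(m,n)\cap g\rK(m,n)g^{-1}]$ is right $\rK(m,n)$-invariant, hence continuous, and that $\BS(m,n)\setminus\langle a\rangle$ is dense in $\rG(m,n)\setminus\rK(m,n)$; the case $|m|=n$ is handled separately there by a direct computation in $\ZZ/m\ZZ\ast\ZZ$. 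You instead work entirely inside $\Aut(T)$: the geodesic from $v_0$ to $gv_0$ gives the containment $\rK(m,n)\cap g\rK(m,n)g^{-1}\leq\Stab(v_0)\cap\Stab(w_1)$, and edge-transitivity together with the identifications $\rK(m,n)\cap t^{-1}\rK(m,n)t = m\rK(m,n)$ and $\rK(m,n)\cap t\rK(m,n)t^{-1} = n\rK(m,n)$ from the proof of Proposition~\ref{prop:relative-profinite-completion-of-HNN} reduces the lower bound to the inequality $|m|\leq n$. Your approach avoids both the normal-form combinatorics and the density-and-continuity step, and it treats the cases $|m|=n$ and $|m|\neq n$ uniformly; the paper's approach has the by-product of proving the corresponding statement for the discrete group $\BS(m,n)$ itself. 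All the facts you invoke (that $\rK(m,n)=\Stab_{\rG(m,n)}(\langle a\rangle)$, that the action is edge-transitive and without inversions, and the two edge-stabilizer computations) are indeed available at this point in the paper, so the argument is complete.
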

\begin{proof}
If $|m| = n$, we have $\rK(m, n) = \langle a \rangle/\langle a^m\rangle = \ZZ/m\ZZ$ and $\rG(m, n) = \BS(m, n) / \langle a^m\rangle \cong \ZZ/m\ZZ \ast \ZZ$. For every $g \in \rG(m, n) \setminus \rK(m, n)$, we have $\rK(m,n) \cap g \rK(m,n) g^{-1} = \{e\}$ and so ${[\rK(m,n) : \rK(m,n) \cap g \rK(m,n) g^{-1}]} = m$.

Now assume that $|m| \neq n$.  We first show that the minimum of $\{[\langle a \rangle : \langle a \rangle \cap g \langle a \rangle g^{-1}] \amid g \in \BS(m,n) \setminus \langle a \rangle\}$ equals $|m|$.  Note that $[\langle a \rangle : \langle a \rangle \cap t^{-1} \langle a \rangle t] = [\langle a \rangle : \langle a^m \rangle] = |m|$.  Take $g \in \BS(m,n) \setminus \langle a \rangle$ an arbitrary freely reduced word without pinches.  We may assume that the uttermost left letter of $g$ is either $t$ or $t^{-1}$. For all $i,j \in \ZZ$, we have $g a^i g^{-1} = a^j$ if and only if $a^i = g^{-1} a^j g$. Since $|m| \leq n$, the word $g^{-1} a^j g$ is freely reduced and has no pinches for every $j \in \{1, \dotsc, |m|-1\}$. It follows that $|\langle a\rangle \cdot (g \langle a\rangle)| \geq m$ and hence $[\langle a \rangle : \langle a \rangle \cap g \langle a \rangle g^{-1}] =  |\langle a\rangle \cdot (g \langle a\rangle)| \geq m$. Therefore, we have
  \begin{equation*}
    \min \{[\langle a \rangle : \langle a \rangle \cap g \langle a \rangle g^{-1}] \amid g \in \BS(m,n) \setminus \langle a \rangle\} = |m|
    \eqstop
  \end{equation*}

Note that since $|m| \neq n$, $\BS(m, n)$ is a dense subgroup of $\rG(m, n)$. Observe also that since the function $\rG(m, n) \ni g \mapsto {[\rK(m,n) : \rK(m,n) \cap g \rK(m,n) g^{-1}]} \in \NN \setminus \{0\}$ is right $\rK(m,n)$-invariant, it is continuous. Moreover, for every $g \in \BS(m,n)$, we have
  \begin{align*}
   [\rK(m,n) : \rK(m,n) \cap g \rK(m,n) g^{-1}] &= |\rK(m,n) \cdot (g \langle a \rangle)| \\
&=  |\langle a\rangle \cdot (g \langle a\rangle)| \\
 &= [\langle a \rangle : \langle a \rangle \cap g \langle a \rangle g^{-1}].
  \end{align*}
Since $\BS(m, n) \setminus \langle a\rangle$ is dense in $\rG(m, n) \setminus \rK(m, n)$, we obtain
  \begin{equation*}
    \min \bigl \{[\rK(m,n) : \rK(m,n) \cap g \rK(m,n) g^{-1}] \amid g \in \rG(m,n) \setminus \rK(m,n) \bigr\} \\
    = |m|
    \eqstop \qedhere
  \end{equation*} 
\end{proof}

We will now derive from Lemma \ref{lem:recover-n-from-index} two useful facts regarding the structure of $\rG(m, n)$.

\begin{proposition} \label{prop:uniqueness-compact-opens-subgroup}
Let $2 \leq |m| \leq n$. Then $\rK(m, n)$ is a maximal compact subgroup of $\rG(m, n)$. Moreover, any maximal compact subgroup of $\rG(m, n)$ is conjugate to $\rK(m, n)$.
\end{proposition}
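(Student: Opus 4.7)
The plan is to study the natural action of $\rG(m,n)$ on its Bass-Serre tree $T$ and to identify $\rK(m,n)$ with the full stabilizer of a unique fixed vertex. Throughout I write $v_0 := \langle a\rangle \in V(T)$, so that $\rK(m,n) = \Stab_{\rG(m,n)}(v_0)$.

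First I would determine the fixed-point set $F(\rK(m,n)) \subseteq V(T)$ of $\rK(m,n)$ acting on $T$. This set is a subtree and contains $v_0$; to show it equals $\{v_0\}$, it suffices to check that $\rK(m,n)$ fixes no neighbor of $v_0$. From the description of the Bass-Serre tree in Section~\ref{sec:Baumslag-Solitar-groups}, the neighbors of $v_0$ split into the $|m|$ ``downward'' vertices $\{a^i t^{-1} v_0 : 0 \leq i \leq |m|-1\}$ and the $n$ ``upward'' vertices $\{a^j t v_0 : 0 \leq j \leq n-1\}$, and the cyclic subgroup $\langle a\rangle \leq \rK(m,n)$ permutes each set cyclically (using the defining relation $a^n t = t a^m$, which identifies $a^n t v_0$ with $t v_0$, and analogously $a^{m} t^{-1} v_0$ with $t^{-1}v_0$). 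Since each orbit has cardinality at least $2$ under the hypotheses $|m|, n \geq 2$, $\rK(m,n)$ cannot fix any neighbor of $v_0$, and hence $F(\rK(m,n)) = \{v_0\}$.

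The maximality of $\rK(m,n)$ follows quickly. If $H \leq \rG(m,n)$ is any compact subgroup with $\rK(m,n) \subseteq H$, then Proposition~\ref{prop:conjugation-compact-subgroups} produces $g \in \rG(m,n)$ with $gHg^{-1} \subseteq \rK(m,n)$, so $H$ fixes the vertex $w := g^{-1} v_0$ of $T$. Since $\rK(m,n) \subseteq H$ also fixes $w$, the first step forces $w = v_0$, whence $H \subseteq \Stab_{\rG(m,n)}(v_0) = \rK(m,n)$ and $H = \rK(m,n)$. Finally, if $H$ is any maximal compact subgroup of $\rG(m,n)$, Proposition~\ref{prop:conjugation-compact-subgroups} gives $g \in \rG(m,n)$ with $gHg^{-1} \subseteq \rK(m,n)$; since $gHg^{-1}$ is itself maximal compact and is contained in the compact subgroup $\rK(m,n)$, this inclusion is an equality, so $H$ is conjugate to $\rK(m,n)$.

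I do not anticipate any serious obstacle; the only point requiring explicit verification is the transitivity of $\langle a\rangle$ on each of the two sets of neighbors of $v_0$ in the Bass-Serre tree, which is a direct computation from the defining relation of $\BS(m,n)$ and which crucially uses the hypothesis $|m|, n \geq 2$.
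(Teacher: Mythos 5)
Your proof is correct, and the overall skeleton (apply Proposition~\ref{prop:conjugation-compact-subgroups} to conjugate a compact overgroup of $\rK(m,n)$ back into $\rK(m,n)$, then again for the conjugacy statement, whose treatment is identical to the paper's) matches the paper. Where you genuinely diverge is in the step that closes the maximality argument. The paper deduces from $\rK(m,n) \leq H \leq g^{-1}\rK(m,n)g$ that $\rK(m,n) \cap g^{-1}\rK(m,n)g = \rK(m,n)$ and then invokes Lemma~\ref{lem:recover-n-from-index} (whose proof goes through Britton's lemma and a density argument) to force $g \in \rK(m,n)$. You instead compute directly that the fixed-point set of $\rK(m,n)$ on the Bass-Serre tree is the single vertex $v_0 = \langle a\rangle$, by checking that the image of $\langle a\rangle$ acts on the $|m|$ downward and $n$ upward neighbours of $v_0$ with orbits of size $|m| \geq 2$ and $n \geq 2$; combined with the fact that a group fixing two vertices fixes the geodesic between them, this pins down $g^{-1}v_0 = v_0$, and $\Stab_{\rG(m,n)}(v_0) = \rK(m,n)$ (as noted in the proof of Proposition~\ref{prop:relative-profinite-completion-of-HNN}) finishes it. The two routes are close in substance --- the quantity $[\rK(m,n) : \rK(m,n) \cap g\rK(m,n)g^{-1}]$ in the lemma is exactly the size of the $\rK(m,n)$-orbit of $gv_0$, so your computation amounts to re-deriving the only consequence of the lemma that is needed here, namely that this index is $\geq 2$ off $\rK(m,n)$. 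What your version buys is a self-contained, purely geometric proof of this proposition that avoids the full strength of Lemma~\ref{lem:recover-n-from-index}; what the paper's version buys is economy, since that lemma is needed elsewhere anyway (e.g.\ in Theorem~\ref{thm:classification-completion-Baumslag-Solitar-groups}). The one point to state carefully is that the orbits of the neighbours of $v_0$ under $\langle a\rangle$ really have size $\geq 2$, i.e.\ $tat^{-1} \notin \langle a\rangle$ and $t^{-1}at \notin \langle a\rangle$; this is where Britton's lemma and the hypothesis $2 \leq |m| \leq n$ enter, exactly as you flag.
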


\begin{proof}
Let $H \leq \rG(m, n)$ be any compact subgroup such that $\rK(m, n) \leq H$. By Proposition \ref{prop:conjugation-compact-subgroups}, there exists $g \in \rG(m, n)$ such that $gHg^{-1} \subset \rK(m, n)$. Then we have $\rK(m, n) \leq H \leq g^{-1} \rK(m, n) g$ implying that  $ \rK(m, n) \cap g^{-1} \rK(m, n) g = \rK(m, n)$. Since $2 \leq |m| \leq n$, we obtain $ g \in \rK(m, n)$ by Lemma~\ref{lem:recover-n-from-index}. This shows that $H = \rK(m, n)$ and thus $\rK(m, n)$ is maximal compact.

Let $H$ be any maximal compact subgroup of $\rG(m, n)$. By Proposition \ref{prop:conjugation-compact-subgroups}, there exists $g \in \rG(m, n)$ such that $g H g^{-1} \leq \rK(m, n)$. Since $H$ is maximal compact, $g H g^{-1}$ is maximal compact as well. Therefore $g H g^{-1} = \rK(m, n)$.
\end{proof}

\begin{proposition}
  \label{prop:trivial-centre}
  Let $2 \leq |m| \leq n$. The centre of $\rG(m,n)$ is trivial.
\end{proposition}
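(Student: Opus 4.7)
My plan is to show that any central element of $\rG(m,n)$ must already lie in the compact subgroup $\rK(m,n)$, so that $Z(\rG(m,n))$ becomes a compact normal subgroup and hence vanishes by Corollary~\ref{cor:no-compact-normal-subgroups}. The work will be done by the action of $\rG(m,n)$ on the Bass--Serre tree $T$ of $\BS(m,n)$.

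First, I would fix $g \in Z(\rG(m,n))$ and observe that $g$ normalises $\rK(m,n)$. Since $\rK(m,n) = \Stab_{\rG(m,n)}(\langle a\rangle)$ for the action on $T$, this yields
\begin{equation*}
  \rK(m,n) \;=\; g\,\rK(m,n)\,g^{-1} \;=\; \Stab_{\rG(m,n)}(g \cdot \langle a\rangle)\eqstop
\end{equation*}
In particular, $\rK(m,n)$ fixes the vertex $g \cdot \langle a\rangle$.

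The key step is to show that $\langle a\rangle$ is the \emph{only} vertex of $T$ fixed by $\rK(m,n)$. The fixed-point set of any subgroup of $\Aut(T)$ is a subtree, so it is enough to verify that $\rK(m,n)$ fixes no neighbour of $\langle a\rangle$. The neighbours of $\langle a\rangle$ split into two $\rK(m,n)$-orbits, namely the sources $\{a^i t^{-1}\langle a\rangle : 0 \leq i < |m|\}$ of the edges starting at $\langle a\rangle$ and the ranges $\{a^i t\langle a\rangle : 0 \leq i < n\}$ of the edges ending at $\langle a\rangle$; on each orbit $\rK(m,n)$ acts transitively, as was already observed in the proof of Proposition~\ref{prop:relative-profinite-completion-of-HNN}. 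Under the hypothesis $2 \leq |m| \leq n$ both orbits have size at least $2$, and a transitive action on a set of size $\geq 2$ admits no fixed point. Hence $\rK(m,n)$ fixes only $\langle a\rangle$, which forces $g \cdot \langle a\rangle = \langle a\rangle$, that is, $g \in \rK(m,n)$.

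Consequently $Z(\rG(m,n))$, being closed in $\rG(m,n)$ (as the centre of a topological group) and contained in $\rK(m,n)$, is a compact subgroup of $\rG(m,n)$; it is also normal, so Corollary~\ref{cor:no-compact-normal-subgroups} forces $Z(\rG(m,n)) = \{e\}$. The only delicate point is the uniqueness of the fixed vertex, and this is precisely where the assumption $|m| \geq 2$ is indispensable: were $|m| = 1$, then $\rK(m,n)$ could fix a downward neighbour and the argument would collapse.
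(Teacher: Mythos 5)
Your proposal is correct. It follows the same skeleton as the paper's proof: show that the centre is contained in $\rK(m,n)$, conclude that it is a compact normal subgroup, and invoke Corollary~\ref{cor:no-compact-normal-subgroups}. The one place where you genuinely diverge is the containment $\cZ(\rG(m,n)) \subset \rK(m,n)$. The paper gets this in one line from Lemma~\ref{lem:recover-n-from-index}: if $g$ is central then $g \rK(m,n) g^{-1} = \rK(m,n)$, so $[\rK(m,n) : \rK(m,n) \cap g\rK(m,n)g^{-1}] = 1 < |m|$, forcing $g \in \rK(m,n)$. You instead prove that $\rK(m,n)$ fixes a unique vertex of the Bass--Serre tree (its fixed-point set is a subtree, and transitivity of $\rK(m,n)$ on each of the two orbits of neighbours of $\langle a\rangle$, of sizes $|m| \geq 2$ and $n \geq 2$, rules out any fixed neighbour), which pins down $g\cdot\langle a\rangle = \langle a\rangle$. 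Both arguments are sound and both isolate $|m| \geq 2$ as the essential hypothesis; yours is self-contained at the level of the tree but effectively re-derives the self-normalising property of $\rK(m,n)$ that Lemma~\ref{lem:recover-n-from-index} already packages, so the paper's route is shorter given the lemmas on hand. One cosmetic slip: the vertices $a^i t^{-1}\langle a\rangle$ are the \emph{ranges} of the edges with source $\langle a\rangle$, and the $a^i t\langle a\rangle$ are the \emph{sources} of the edges with range $\langle a\rangle$ --- you have the two words interchanged --- but the sets you list and the orbit count are correct, so the argument is unaffected.
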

\begin{proof}
Denote by $T$ the Bass-Serre tree of $\BS(m,n)$ and let $g \in \mathcal Z(\rG(m, n))$. Since $g \rK(m, n) g^{-1} = \rK(m, n)$ and $2 \leq |m|$, we have $g \in \rK(m, n)$ by Lemma \ref{lem:recover-n-from-index}. So the centre $\mathcal Z(\rG(m,n))$ is contained in $\rK(m,n)$ showing that it is a compact normal subgroup of $\rG(m,n)$.  By Corollary \ref{cor:no-compact-normal-subgroups}, it follows that $\cZ(\rG(m,n)) = \{e\}$.
\end{proof}

Before proving Theorem \ref{thm:main-result-on-completions}, we start by classifying the relative profinite completions of Baumslag-Solitar groups, up to isomorphism.

\begin{theorem}
  \label{thm:classification-completion-Baumslag-Solitar-groups}
  Let $2 \leq |m| \leq n$ and $2 \leq |p| \leq q$.  Then $\rG(m,n) \cong \rG(p,q)$ if and only if 
  \begin{itemize}
\item  either $|m| = n = |p| = q$
\item or $|m| \neq n$ and $(m,n) = (p,q)$.
\end{itemize}
\end{theorem}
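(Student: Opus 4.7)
Plan. The ``if'' direction is immediate: when $|m|=n=|p|=q$, both groups coincide with the discrete free product $\ZZ/n\ZZ \ast \ZZ$ described in Section~\ref{sec:relative-profinite-completions}, and when $(m,n)=(p,q)$ the two groups are literally equal. For the converse, suppose $\phi : \rG(m,n) \to \rG(p,q)$ is a topological isomorphism. Since $\rG(m,n)$ is discrete if and only if $|m|=n$, this property must match on both sides. In the discrete case, abelianising the isomorphism $\ZZ/n\ZZ \ast \ZZ \cong \ZZ/q\ZZ \ast \ZZ$ gives $\ZZ/n\ZZ \oplus \ZZ \cong \ZZ/q\ZZ \oplus \ZZ$, whose torsion subgroups force $n=q$ and hence $|m|=n=|p|=q$.

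So assume $|m|\neq n$ and $|p|\neq q$. By Proposition~\ref{prop:uniqueness-compact-opens-subgroup}, every maximal compact subgroup of $\rG(p,q)$ is conjugate to $\rK(p,q)$, so after composing $\phi$ with a suitable inner automorphism I may assume $\phi(\rK(m,n)) = \rK(p,q)$. Applying Lemma~\ref{lem:recover-n-from-index} on both sides then yields $|m|=|p|$. Moreover, the restriction $\phi|_\rK$ is a topological automorphism of the profinite abelian group $\rK \cong \prod_{\ell\mid mn}\ZZ_\ell$ (using the description referenced after Proposition~\ref{prop:relative-profinite-completion-of-HNN}), so it decomposes as multiplication by units $\alpha_\ell \in \ZZ_\ell^\times$ in each $\ell$-primary factor.

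To recover $n$ and the sign of $m$ simultaneously, I would exploit the HNN structure of Proposition~\ref{prop:relative-profinite-completion-of-HNN}. Writing $\phi(t) = t'c$ with $t'$ the stable letter of $\rG(p,q)$ and $c \in \rK(p,q)$, and using the abelianness of $\rK$ to obtain $\Ad_{\phi(t)}|_\rK = \Ad_{t'}|_\rK$, I apply $\phi$ to the HNN relation $\Ad_t(m\rK(m,n)) = n\rK(m,n)$: this gives $\phi(n\rK(m,n)) = \Ad_{t'}(\phi(m\rK(m,n))) = \Ad_{t'}(p\rK(p,q)) = q\rK(p,q)$, where the middle equality uses $|m|=|p|$ and the invariance of $\prod_\ell \ell^{v_\ell(m)}\ZZ_\ell$ under unit multiplication. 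Since the same invariance shows $\phi(n\rK(m,n)) = \prod_\ell \ell^{v_\ell(n)}\ZZ_\ell$, comparing $\ell$-primary valuations forces $v_\ell(n) = v_\ell(q)$ for every $\ell$, i.e., $n=q$. Finally, on each $\ell$-factor $\Ad_t$ is multiplication by $n/m$ and $\Ad_{t'}$ is multiplication by $n/p$; the intertwining equation $\phi|_{\ZZ_\ell} \circ (\cdot \, n/m) = (\cdot \, n/p) \circ \phi|_{\ZZ_\ell}$ combined with the commutativity of $\ZZ_\ell$ gives $n/m = n/p$ in $\QQ_\ell$, hence $m=p$.

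The main obstacle is the final sign distinction. Extracting $|m|$ is a straightforward conjugation-index calculation, and $n$ alone could also be recovered from the modular function or the Elder--Willis scale function; but distinguishing $m$ from $-m$ genuinely requires the $\ell$-adic intertwining argument, because $\Aut(\rK)$, being abelian, cannot absorb the sign of the rational multiplier $n/m$.
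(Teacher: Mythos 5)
Your skeleton tracks the paper's proof closely: the discrete case, the normalization $\phi(\rK(m,n))=\rK(p,q)$ via Proposition~\ref{prop:uniqueness-compact-opens-subgroup}, the extraction of $|m|=|p|$ from Lemma~\ref{lem:recover-n-from-index}, the ring structure of $\rK$ forcing $\phi|_{\rK}$ to be multiplication by a unit, and the idea of intertwining the two HNN conjugations to detect the sign. All of those steps are sound. (Your alternative recoveries of $n$ via the modular function or the Elder--Willis scale function are also fine, and the latter is what the paper actually uses.)

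The genuine gap is the sentence ``writing $\phi(t)=t'c$ with $t'$ the stable letter of $\rG(p,q)$ and $c\in\rK(p,q)$''. Nothing you have established puts $\phi(t)$ in the coset $t'\rK(p,q)$, nor even in the double coset $\rK(p,q)\,t'\,\rK(p,q)$: a priori $\phi(t)$ is an arbitrary element of $\HNN(\rK(p,q),p\rK(p,q),p\mapsto q)$ whose Britton normal form may involve many occurrences of $t'^{\pm1}$, and normalizing $\phi(\rK)=\rK'$ (or even $\phi(a)=b$) gives no control on $\phi(t)$ modulo $\rK'$. Since the identities $\Ad_{\phi(t)}|_{p\rK'}=\Ad_{t'}|_{p\rK'}$ and $\Ad_{\phi(t)}(p\rK')=q\rK'$ both rest on this coset claim, your derivations of $n=q$ and of $m=p$ collapse without it; in effect you have assumed that $\phi$ carries the base edge of one Bass--Serre tree to the base edge of the other, which is essentially what must be proven. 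This is exactly where the paper's proof does its real work: after arranging $\pi(a)=b$, it computes the modular function to get $\Delta_-(\pi(t)^{-1}u)=1$, deduces via Britton's lemma that a word representing $\pi(t)^{-1}u$ has as many $u$'s as $u^{-1}$'s, finds $k$ such that the deep subgroup $m^kn^k\rK(-m,n)$ commutes with $\pi(t)^{-1}u$, and only then runs the conjugation computation $u\,b^{m^kn^k}u^{-1}=\pi(t)\,b^{m^kn^k}\,\pi(t)^{-1}$ on that subgroup to reach the sign contradiction. You would need an analogous device (or a separate argument that the index data forces $\phi(t)\in t'\rK(p,q)$, which is not obvious and is sensitive to divisibility relations between $p$ and $q$) to close the gap.
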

\begin{proof}
Assume first that $|m| = n$. Then we have $\rG(m,n) \cong \ZZ / m\ZZ \ast \ZZ$.  If $|p| \neq q$, then $\rG(p,q)$ is not a discrete group, hence $|p| = q$. The largest finite order of an element in $\ZZ/ m\ZZ * \ZZ$ is $|m|$, so $\ZZ/ m\ZZ * \ZZ \cong \ZZ/ p\ZZ \ast \ZZ$ if and only if $|m| = |p|$. 

Assume now that $|m| \neq n$. Let $d = \mathrm{gcd}(m,n)$, $m_0 = |m|/d$ and $n_0 = n/d$.  Denote by $s : \rG(m,n) \ra \NN \setminus \{0\}$ the scale function of $\rG(m,n)$.  By \cite{elderwillis13}, we have $s(\rG(m,n)) = |m_0|^{\NN} \cup n_0^{\NN}$.  Since $m_0$ and $n_0$ are coprime, the pair $(|m_0|, n_0)$ is an invariant of $\rG(m,n)$.

Recall from Proposition \ref{prop:uniqueness-compact-opens-subgroup}, that $\rK(m,n)$ is, up to conjugacy, the unique maximal compact subgroup of $\rG(m,n)$.  By Lemma \ref{lem:recover-n-from-index}, 
  \begin{equation*}
    \min \{[\rK(m,n) : \rK(m,n) \cap g \rK(m,n) g^{-1}] \amid g \in \rG(m,n) \setminus \rK(m,n)\} = |m|
    \eqstop
  \end{equation*}
  We infer that $|m|$ is an invariant of $\rG(m,n)$. Thus $d = |m|/m_0$ and $n = d n_0$ are invariants of $\rG(m,n)$ as well. 

We are left to prove that $\rG(m, n)$ and $\rG(-m, n)$ are not isomorphic. Assume by contradiction that there exists an isomorphism $\pi : \rG(m, n) \to \rG(-m, n)$. We may assume that $m > 0$. By Proposition \ref{prop:uniqueness-compact-opens-subgroup}, up to conjugating by an inner automorphism, we may assume that $\pi(\rK(m, n)) = \rK(-m, n)$. Consider $\rG(m, n) = \langle \rK(m, n), t\rangle$ and $\rG(-m, n) = \langle \rK(-m, n), u \rangle$ with $\rK(m, n) = \overline{\langle a\rangle}$ and $\rK(-m, n) = \overline{\langle b\rangle}$. Observe that since $\rK(m, n)$ and $\rK(-m, n)$ are both profinite completions of the integer ring $\ZZ$, they are both compact unital rings. Momentarily, put $\mathbb A = \rK(m, n)$ with $1_{\mathbb A} = a$ and $\mathbb B = \rK(-m, n)$ with $1_{\mathbb B} = b$. Take a sequence $n_i \in \ZZ$ such that $\lim_i \pi(n_i 1_{\mathbb A}) = 1_{\mathbb B}$. Since $\mathbb B$ is compact, up to passing to a subsequence, we may assume that $n_i 1_{\mathbb B}$ is convergent in $\mathbb B$. Write $r = \lim_i n_i 1_{\mathbb B}$. We have
\begin{equation*}
  r \pi(1_{\mathbb A}) = \lim_i \, (n_i 1_{\mathbb B}) \, \pi(1_{\mathbb A}) = \lim_i n_i \pi(1_{\mathbb A}) = \lim_i \pi(n_i 1_{\mathbb A}) = 1_{\mathbb B}
\end{equation*}
and so $\pi(1_{\mathbb A})$ is a unit of $\mathbb B$. Next, define the automorphism $\rho : \mathbb B \to \mathbb B$ by $\rho(x) = rx$ for all $x \in \mathbb B$. Since $\rho(-m x) = - m \rho(x)$ and $\rho(n x) = n\rho(x)$ for all $x \in \rK(-m, n)$, $\rho$ extends to an automorphism of $\rG(-m, n)$ by letting $\rho(u) = u$. Note that $\rho$ and $\rho^{-1}$ are continuous on $\rG(-m, n)$ since $\rK(-m, n)$ is an open subgroup of $\rG(-m, n)$ and $\rho$ and $\rho^{-1}$ are continuous on $\rK(-m, n)$. Thus $\rho \circ \pi : \rG(m, n) \to \rG(-m, n)$ is an isomorphism such that $(\rho \circ \pi)(a) = b$. Hence, up to composing $\pi$ with $\rho$,  we may assume that $\pi(a) = b$. 

Write $\Delta_\pm : \rG(\pm m, n) \to \mathbb R_+$ for the modular function. Fix a left-invariant Haar measure $\mu$ on $\rG(-m, n)$ such that $\mu(\rK(-m, n)) = 1$. We then have $\mu(m \rK(-m, n)) = 1/m$ (resp.\ $\mu(n \rK(-m, n)) = 1/n$) since $[\rK(-m, n) : m \rK(-m, n)] = m$ (resp.\ $[\rK(-m, n) : n \rK(-m, n)] = n$). We moreover have
$$\frac1n = \mu(n \rK(-m, n)) = \mu(u (m \rK(-m, n)) u^{-1}) = \Delta_{-}(u^{-1}) \mu(m \rK(-m, n)) = \Delta_{-}(u^{-1}) \frac1m$$
and so $\Delta_{-}(u) = n/m$. Likewise, we have $\Delta_+(t) = n/m$. Then we get
\begin{equation*}
  \Delta_{-}(\pi(t)^{-1} u)
  =
  \frac{1}{\Delta_{-}(\pi(t))} \Delta_{-}(u)
  =
  \frac{1}{\Delta_+(t)} \Delta_{-}(u)
  =
  \frac{m}{n} \frac{n}{m}
  =
  1
  \eqstop
\end{equation*}
Recall from Proposition \ref{prop:relative-profinite-completion-of-HNN} that $\rG(-m,n)$ is an HNN-extension of $\rK(-m,n)$ whose free letter is $u$.  Since $\Delta_{-}(\pi(t)^{-1}u) = 1$ and $m \neq n$, any word representing $\pi(t)^{-1}u$ in $\rG(-m,n)$ contains as many $u$'s as $u^{-1}$'s.  It follows that if $k$ denotes the number of $u$'s appearing in a word representing $\pi(t)^{-1}u$, then $m^k n^k \rK(-m,n)$ commutes with $\pi(t)^{-1}u$.  We may assume that $k \geq 1$. We get 
\begin{equation*}
  b^{-m^{k - 1} n^{k + 1}} = u b^{m^k n^k} u^{-1} = \pi(t) b^{m^k n^k} \pi(t)^{-1} = \pi(t a^{m^{k} n^{k}} t^{-1}) = \pi(a^{m^{k - 1}n^{k + 1}}) = b^{m^{k - 1} n^{k + 1}},
\end{equation*}
which is a contradiction. Therefore, $\rG(m, n)$ and $\rG(-m, n)$ are not isomorphic. This finishes the proof.
\end{proof}

\begin{proof}[Proof of Theorem \ref{thm:main-result-on-completions}]
 
We use the following notation throughout the proof.
  \begin{itemize}
    \item $G_i = \rG(m_i, n_i)$, $K_i = \rK(m_i, n_i)$, $H_j = \rG(p_j, q_j)$ and $L_j = \rK(p_j, q_j)$ for all  $i \in \{1, \dotsc, k\}$ and all $j \in \{1, \dots, l\}$.  
    \item $G = G_1 \times \dotsm \times G_k$ and $K = K_1 \times \dotsm \times K_k$.
    \item $H = H_1 \times \dotsm \times H_l$ and $L = L_1 \times \dotsm \times L_l$.
  \end{itemize}

We prove the result by induction on $\max(k, l)$. If $\max(k, l) = 1$, then we necessarily have $k = l = 1$ and the result follows from Theorem \ref{thm:classification-completion-Baumslag-Solitar-groups} in that case. We assume now that $\max(k, l) \geq 2$. Let $\pi : G \to H$ be an isomorphism. Since by Proposition \ref{prop:uniqueness-compact-opens-subgroup} we know that $K_i$ is maximal compact in $G_i$ for every $i \in \{1, \dotsc, k\}$, also $K$ is maximal compact in $G$. Similarly $L$ is the unique maximal compact subgroup of $H$, up to conjugacy. Therefore, we can compose $\pi$ with an inner automorphism of $H$ in order to assume that $\pi(K) = L$.  

Possibly considering the inverse of $\pi$ and permuting the factors of $G = G_1 \times \dotsm \times G_k$, we may assume that $|m_1| = \min ( \{ |m_i|\}_{i \leq k} \cup  \{|p_j|\}_{j \leq l})$.  Let $g = (t_1^{-1}, e, \dotsc, e) \in G$, where $t_1$ denotes the free letter of $\BS(m_1,n_1)$.  Since $[K_1 : K_1 \cap t_1^{-1} K_1 t_1] = |m_1|$, we obtain 
  \begin{equation*}
    [L : L \cap \pi(g) L \pi(g)^{-1}]
    =
    [K : K \cap g K g^{-1}]
    =
    |m_1|
    \eqstop
  \end{equation*}
  Writing $\pi(g) = (h_1, \dotsc, h_l) \in H_1 \times \dotsm \times H_l$, we also have
  \begin{equation*}
    [L : L \cap \pi(g) L \pi(g)^{-1}]
    =
    \prod_{j \in \{1, \dotsc, l\}} [L_j : L_j \cap h_j L_j h_j^{-1}]
    \eqstop
  \end{equation*}
  By Lemma \ref{lem:recover-n-from-index}, the minimum of $\{[L_j : L_j \cap h L_j h^{-1}] \amid h \in H_j \setminus L_j\}$ equals $|p_j|$.  Combining this with the fact that $|m_1| = \min (\{ |m_i|\}_{i \leq k} \cup  \{|p_i|\}_{j \leq l})$, we obtain that there is some $j_0 \in \{1, \dotsc, l\}$ such that $[L_{j_0} : L_{j_0} \cap h_{j_0} L_{j_0} h_{j_0}^{-1}] = |m_1| = |p_{j_0}|$ and $h_j \in L_j$ for all $j \neq j_0$.  Permuting the factors of $L$, we can assume that $j_0 = 1$.  We have proved that $\pi(G_1 \times K_2 \times \dotsm \times K_k) \leq H_1 \times L_2 \times \dotsm \times L_l$.

  Since $|p_1| = |m_1| = \min (\{ |m_i|\}_{i \leq k} \cup  \{|p_j|\}_{j \leq l})$, the same argument as before implies that there is $i_0 \in \{1, \dotsc, k\}$ such that $\pi^{-1}(H_1 \times L_2 \times \dotsm \times L_l) \leq K_1 \times \dotsm \times G_{i_0} \times \dotsm \times K_k$.  Since
  \begin{align*}
    G_1 \times K_2 \times \dotsm \times K_k &=
    \pi^{-1}(\pi(G_1 \times K_2 \times \dotsm \times K_k)) \\
  &  \leq \pi^{-1}(H_1 \times L_2 \times \dotsm \times L_l) \\
  & \leq K_1 \times \dotsm \times G_{i_0} \times \dotsm \times K_k
    \eqcomma
  \end{align*}
  it follows that $i_0 = 1$.  So $\pi(G_1 \times K_2 \times \dotsm \times K_k) = H_1 \times L_2 \times \dotsm \times L_l$.

  Take $g \in G_1 \setminus \{e\}$ an arbitrary non-trivial element and write $\pi(g,e, \dotsc, e) = (h_1, \dotsc, h_l) \in {H_1 \times L_2 \times \dotsm \times L_l}$.  Then
  \begin{align*}
    \pi(\{e\} \times G_2 \times \dotsm \times G_k)
    & \subset
    \pi(\cZ_G( g, e, \dots, e)) \\
    & =
    \cZ_H(\pi(g, e, \dots, e)) \\
    & = 
    \cZ_H(h_1, \dots, h_j, \dots, h_l) \\
    & 
    \subset H_1 \times \cdots \times H_{j - 1} \times \mathcal Z_{H_j}(h_j) \times H_{j + 1} \times \cdots \times H_l. 
  \end{align*}
 Since $h_j \in L_j$ and $L_j$ is abelian, we have $L_j \subset \mathcal Z_{H_j}(h_j)$, hence
 \begin{align*}
\pi(G_1 \times \{e\} \times \cdots \times \{e\}) &\subset H_1 \times L_2 \times \cdots \times L_l \\
& \subset  H_1 \times \cdots \times H_{j - 1} \times \mathcal Z_{H_j}(h_j) \times H_{j + 1} \times \cdots \times H_l. 
\end{align*}
 We get $\pi(G) = H \subset H_1 \times \cdots \times H_{j - 1} \times \mathcal Z_{H_j}(h_j) \times H_{j + 1} \times \cdots \times H_l$ and hence $\mathcal Z_{H_j}(h_j) = H_j$. By Proposition \ref{prop:trivial-centre}, we have $h_j = e$. So, $\pi(G_1 \times \{e\} \times \dotsm \times \{e\}) \subset H_1 \times \{e\} \times \dotsm \times \{e\}$. Likewise, we have $\pi^{-1}(H_1 \times \{e\} \times \dotsm \times \{e\}) \subset G_1 \times \{e\} \times \dotsm \times \{e\}$, hence 
 \begin{equation*}
   \pi(G_1 \times \{e\} \times \dotsm \times \{e\}) = H_1 \times \{e\} \times \dotsm \times \{e\}
   \eqstop
 \end{equation*}
 In particular $G_1 \cong H_1$. By Theorem \ref{thm:classification-completion-Baumslag-Solitar-groups}, we have $|m_1| = n_1 = |p_1| = q_1$ or $(m_1, n_1) = (p_1, q_1)$. Applying again Proposition \ref{prop:trivial-centre}, we get
  \begin{equation*}
    \pi(\{e\} \times G_2 \times \dotsm \times G_k) = \pi(\cZ_G(G_1 \times \{e\} \times \dotsm \times \{e\})) = \cZ_H(H_1 \times \{e\} \times \dotsm \times \{e\}) = \{e\} \times H_2 \times \dotsm \times H_k
    \eqstop
  \end{equation*}
 It follows that $G_2 \times \dotsm \times G_k \cong H_2 \times \dotsm \times H_l$. Since $\max(k - 1, l - 1) = \max(k, l) - 1$, the induction hypothesis yields $k - 1 = l - 1$, that is, $k = l$ and there exists a permutation $\sigma \in \Sym(k - 1)$ such that $G_i \cong H_{\sigma(i)}$ for every $i \in \{2, \dots, k\}$. Finally, an application of Theorem \ref{thm:classification-completion-Baumslag-Solitar-groups} finishes the proof of Theorem  \ref{thm:main-result-on-completions}.
 \end{proof}

\section{An algebraic invariant for aperiodic subequivalence relations}
\label{sec:invariant}

In this section, $\cS \subset \cR$ denotes an inclusion of pmp equivalence relations defined on a standard probability space $(X, \mu)$.  We construct an invariant of so called {\em aperiodic} inclusions based on index cocycles.  This requires some preliminary work on invariants for cocycles.

\subsection{Ergodic 1-cocycles}
\label{sec:ergodic-cocycles}

Let $\mathcal R$ be a pmp equivalence relation defined on the standard probability space $(X, \mu)$ and $G$ a topological group. A $1$-{\em cocycle} $\Omega$ for $\mathcal R$ with values in $G$ is a Borel map $\Omega : \mathcal R \to G$ such that 
\begin{equation}\label{cocycle}
\Omega(x, z) = \Omega(x, y) \,  \Omega(y, z) \ \text{for almost every} \ (x, y, z) \in \mathcal R^{(2)}.
\end{equation} 
Here we mean for $\nu^{(2)}$-almost every $(x, y, z) \in \mathcal R^{(2)}$, where $\nu^{(2)}$ is the measure constructed in Section \ref{sec:pmp-equivalence-relations}. Two cocycles $\Omega_1, \Omega_2 : \mathcal R \to G$ are {\em cohomologous} if there exists a Borel map $c : X \to G$ such that 
\begin{equation*}
  \Omega_2(x, y) = c(x) \, \Omega_1(x, y) \, c(y)^{-1} \ \text{for almost every} \ (x, y) \in \mathcal R
  \eqstop
\end{equation*}
Here we mean for $\nu$-almost every $(x, y) \in \mathcal R$, where $\nu$ is the measure constructed in Section \ref{sec:pmp-equivalence-relations}.

In \cite{kaimanovichschmidt}, Kaimanovich and Schmidt introduced the notion of ergodic 1-cocycles.  We use their definition in the setting of cocycles with values in non-Archimedian Polish groups. Recall that a Polish group $G$ is {\em non-Archimedian} if it admits a neighbourhood basis $(K_n)_{n \in \NN}$ of the neutral element consisting of open subgroups of $G$.
  
\begin{definition}[See {\cite[Definition 2.3]{kaimanovichschmidt}}]
  \label{def:approximately-ergodic-cocycles}
 Let $\mathcal R$ be a pmp equivalence relation defined on $(X, \mu)$ and $G$ a non-Archimedian Polish group. Let ${\Omega: \cR \ra G}$ be a 1-cocycle. We say that $\Omega$ is {\em ergodic} for $\mathcal R$ if for every open subgroup $K \leq G$ the subequivalence relation $\{(x,y) \in \cR \amid \Omega(x,y) \in K\}$ has the same invariant Borel sets as $\cR$.
\end{definition}

\begin{remark}
  As already pointed out in \cite{kaimanovichschmidt}, the notion of ergodic cocycles is not invariant under passing to cohomologous cocycles.
\end{remark}

\begin{proposition}[See {\cite[Proposition 2.5]{kaimanovichschmidt}}]
  \label{prop:ergodic-cocycles-give-invariant}
  Let $\cR$ be an ergodic pmp equivalence relation defined on $(X, \mu)$ and $\Omega: \cR \ra G$ an ergodic 1-cocycle with values in a non-Archimedian Polish group. Then there are a unique minimal closed subgroup $H \leq G$ and a co-negligible $\mathcal R$-invariant Borel subset $X_0 \subset X$ such that $\Omega(\cR|_{X_0}) \subset H$ and for every non-negligible Borel subset $U \subset X_0$, we have that $\Omega(\cR |_U)$ is dense in $H$.
\end{proposition}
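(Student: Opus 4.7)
The plan is to realize $H$ as the essential image of $\Omega$ in $G$, show that ergodicity of $\Omega$ forces it to be a closed subgroup, and deduce density by a contradiction argument exploiting ergodicity of $\cR_{gKg^{-1}}$. Fix a decreasing countable neighborhood basis $(K_n)_{n \in \NN}$ of $e \in G$ consisting of open subgroups and set
\begin{equation*}
H := \{g \in G : \nu(\Omega^{-1}(gK_n) \cap \cR) > 0 \text{ for every } n \in \NN\}.
\end{equation*}
Each coset $gK_n$ is clopen and $G/K_n$ is countable, so $H$ is closed (its complement is a countable union of clopen cosets). Closure under inversion uses the swap-invariance of $\nu$ combined with a basis refinement by conjugation. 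Multiplicative closure is the crux: for $g_1, g_2 \in H$ and $n$ given, pick $m$ with $K_m \leq K_n \cap g_2 K_n g_2^{-1}$ (possible since $g_2 K_n g_2^{-1}$ is open and $(K_n)$ is a basis), so that $g_1 K_m \cdot g_2 K_m \subset g_1 g_2 K_n$. The sets $Y_i := \{y : \exists x, (x,y) \in \cR, \Omega(x,y) \in g_i K_m\}$ are $\cR_{K_m}$-invariant and non-null, hence conull by $\cR_{K_m}$-ergodicity; concatenating witnesses along $y \in Y_1 \cap Y_2$ yields pairs $(x, z) \in \cR$ with $\Omega(x, z) \in g_1 g_2 K_n$, so $g_1 g_2 \in H$.

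\textbf{Essential containment and minimality.} Removing from $X$ the countable union of $\cR$-saturations of null cosets $gK_n$ gives a conull $\cR$-invariant $X_0 \subset X$ with $\Omega(\cR|_{X_0}) \subset H$. For minimality: given any closed $H' \leq G$ with conull $\cR$-invariant witness $X'$ satisfying $\Omega(\cR|_{X'}) \subset H'$, for every $g \in H$ we have $\nu(\Omega^{-1}(gK_n) \cap \cR|_{X'}) > 0$ (since $X'$ is conull and $g \in H$), so $gK_n$ intersects $H'$ for every $n$; as $H'$ is closed and $(K_n)$ is a neighborhood basis of $e$, we conclude $g \in H'$, hence $H \subset H'$, giving uniqueness.

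\textbf{Density.} Let $U \subset X_0$ be non-negligible and assume for contradiction that $\Omega(\cR|_U)$ is not dense in $H$: there exist $g \in H$ and $n$ with $gK_n \cap \Omega(\cR|_U) = \emptyset$. Consider
\begin{equation*}
Z := \{x \in X_0 : \exists y \in U, (x, y) \in \cR, \Omega(x, y) \in gK_n\}.
\end{equation*}
The cocycle identity shows $Z$ is $\cR_{gK_n g^{-1}}$-invariant (from $\Omega(x',y) = \Omega(x',x)\Omega(x,y) \in gK_n g^{-1} \cdot gK_n = gK_n$), and $\cR_{gK_n g^{-1}}$ is ergodic by hypothesis since $gK_n g^{-1}$ is open. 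Moreover, the $\cR_{K_n}$-invariant set $U_0 := \{y : \exists x, (x,y) \in \cR, \Omega(x,y) \in gK_n\}$ is non-null (since $g \in H$) and hence conull by $\cR_{K_n}$-ergodicity; Fubini over $y \in U \cap U_0$ (a non-negligible subset of $U$) makes $Z$ non-null, so $Z$ is conull. But $Z \cap U = \emptyset$ by our assumption, contradicting $\mu(U) > 0$.

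\textbf{Main obstacle.} The crux is establishing multiplicative closure of $H$. Two cocycle values from different pairs $(x_1, y_1), (x_2, y_2) \in \cR$ cannot be concatenated without a shared middle point, and cosets of non-normal open subgroups do not multiply to cosets; this is resolved by passing to a smaller open subgroup $K_m \leq K_n \cap g_2 K_n g_2^{-1}$ adapted to conjugation by $g_2$, and invoking ergodicity of $\cR_{K_m}$ so the witness sets $Y_1, Y_2$ become conull and thus intersect.
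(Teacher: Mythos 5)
Your proposal follows essentially the same strategy as the paper's proof: realise $H$ as the essential image of $\Omega$ via the coset decomposition by the basis subgroups $K_n$, use ergodicity of the subrelations $\cR_{K} = \{(x,y) \in \cR \amid \Omega(x,y) \in K\}$ to make witness sets conull so that cocycle values can be concatenated, discard the null preimages to obtain $X_0$, and run the same ergodicity argument on an arbitrary non-negligible $U$ for density. The paper packages the last step as the location-independence $H(A) = H(B)$ for all non-negligible $A, B$, whereas you argue density directly by contradiction via the $\cR_{gK_ng^{-1}}$-invariant set $Z$; these are equivalent reformulations and your version is correct. The closedness, inversion, essential containment, minimality and density paragraphs are all sound.

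The one step that fails as literally written is the multiplicativity computation. The algebraic containment $g_1K_m \cdot g_2K_m \subset g_1g_2K_n$ is fine with your choice of $K_m$, but the concatenation of witnesses does not produce an element of $g_1K_m \cdot g_2K_m$: with $Y_i = \{y \amid \exists x,\ (x,y) \in \cR,\ \Omega(x,y) \in g_iK_m\}$, a point $y \in Y_1 \cap Y_2$ gives $x_1, x_2$ with $\Omega(x_1,y) \in g_1K_m$ and $\Omega(x_2,y) \in g_2K_m$, hence $\Omega(x_1,x_2) = \Omega(x_1,y)\,\Omega(x_2,y)^{-1} \in g_1K_m g_2^{-1} = g_1g_2^{-1}(g_2K_mg_2^{-1})$, which lies in $g_1g_2^{-1}K_n$, not $g_1g_2K_n$. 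Landing in $g_1g_2K_n$ would require the second witness set to be oriented the other way, $\{y \amid \exists z,\ \Omega(y,z) \in g_2K_m\}$, whose $\cR_{K_m}$-invariance is exactly what breaks for non-normal $K_m$ --- the very obstacle you identify yourself. The repair is immediate: choose $K_m \subset K_n \cap g_2^{-1}K_ng_2$ (not $g_2K_ng_2^{-1}$) so that $g_1K_mg_2^{-1} \subset g_1g_2^{-1}K_n$; the concatenation then shows $g_1g_2^{-1} \in H$, and since $e \in H$ (the diagonal has full $\nu$-measure and $\Omega(x,x) = e$ almost everywhere) and you have inversion separately, closure under $(a,b) \mapsto ab^{-1}$ suffices for $H$ to be a subgroup. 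The paper sidesteps the orientation issue by composing partial isomorphisms $\vphi \circ \psi$ rather than intersecting target sets, and likewise only obtains $g_1^{-1}g_2 \in H$; the content is the same.
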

\begin{proof}
  We follow the proof of \cite[Proposition 2.5]{kaimanovichschmidt}.  Let $(K_n)_{n \in \NN}$ be a neighbourhood basis of $e$ consisting of open subgroups of $G$. After discarding a co-negligible $\mathcal R$-invariant Borel subset of $X$, we may assume that $\Omega$ is a {\em strict} $1$-cocycle, that is, a $1$-cocycle for which the cocycle identity (\ref{cocycle}) holds {\em everywhere}. For every $n \in \NN$, the equivalence relation $\cR_n = \{ (x,y) \in \cR \amid \Omega(x, y) \in K_n\}$ is ergodic by assumption.  For every non-negligible Borel subset $A \subset X$  and every $n \in \NN$, define
  \begin{gather*}
    H_n(A)
    =
    \{g \in G \amid \exists \vphi \in [[\cR|_A]]: \mu(\dom(\varphi)) > 0 \ \text{and} \ \Omega(x, \vphi(x)) \in g K_n \text{ for all } x \in \dom(\vphi)\}
    \subset G
    \eqstop
  \end{gather*}
  Put $H(A) = \bigcap_{n \in \NN} H_n(A)$. Observe that $H(A) \subset G$ is the essential image of the restriction $\Omega_A : \mathcal R|_A \to G$, that is, $H(A)$ is the smallest closed subset $F \subset G$ such that $\nu(\Omega_A^{-1}(G \setminus F)) = 0$.
  
  Let $A, B \subset X$ be non-negligible.  We show that $H(A) = H(B)$.  Take $n \in \NN$.  For $g \in H_n(A)$, there is a partial isomorphism $\vphi \in [[\cR|_A]]$ such that $\mu(\dom(\varphi)) > 0$ and $\Omega(x, \vphi(x)) \in g K_n$ for all $x \in \dom(\vphi)$.  Take $m \in \NN$ large enough such that $g^{-1} K_m g, K_m \subset K_n$.  Since $\cR_m$ is ergodic, there exist a non-negligible Borel subset $D \subset B$ and partial isomorphisms $\psi_1, \psi_2 \in [[\cR_m]]$ defined on $D$ such that $\psi_1(D) \subset \dom(\vphi)$ and $\psi_2(D) = (\vphi \circ \psi_1)(D)$.  It follows that for almost every $x \in D$, we have $(\psi_2^{-1} \circ \vphi \circ \psi_1)(x) \in B$ and
  \begin{align*}
   \Omega(x, (\psi_2^{-1} \circ \vphi \circ \psi_1)(x)) &=
     \Omega(x, \psi_1(x)) \, \Omega(\psi_1(x), (\vphi \circ \psi_1)(x)) \, \Omega((\vphi \circ \psi_1)(x),(\psi_2^{-1} \circ \vphi \circ \psi_1)(x)) \\
    & \in K_m g K_n K_m \\
    & = g (g^{-1} K_m g)K_n K_m \\
    & \subset g K_n
    \eqstop
  \end{align*}
  This shows that $g \in H_n(B)$.  By symmetry, it follows that $H_n(A) = H_n(B)$ for all $n \in \NN$ and hence $H(A) = H(B)$.

Put $H = H(X)$.  We show that $H \subset G$ is a subgroup.  Take $g_1, g_2 \in H$, $n \in \NN$ and take $\vphi \in [[\cR ]]$ such that $\mu(\dom(\varphi)) > 0$ and $\Omega(x, \vphi(x)) \in g_2 K_n$ for all $x \in \dom(\vphi)$.  Let $m \in \NN$ be large enough such that $g_2^{-1}g_1 K_m g_1^{-1}g_2 \subset K_n$.  Since $H(\dom(\vphi)) = H$,  there is some $\psi \in [[\cR|_{\dom(\vphi)}]]$ such that $\mu(\dom(\psi)) > 0$ and $\Omega(y, \psi^{-1}(y)) \in g_1 K_m$ for all $y \in \dom(\psi^{-1})$.  It follows that for all $x \in \dom(\psi)$, 
  \begin{align*}
    \Omega(x, (\vphi \circ \psi)(x))
    & =
    \Omega(x, \psi(x)) \, \Omega(\psi(x), (\vphi \circ \psi)(x)) \\
    & \in
    K_m g_1^{-1} g_2 K_n \\
    & =
    g_1^{-1} g_2 (g_2^{-1}g_1 K_m g_1^{-1}g_2) K_n \\
    & \subset
    g_1^{-1} g_2 K_n
    \eqstop
  \end{align*}
  We have shown that $g_1^{-1}g_2 \in H$,  hence $H$ is a group.

  
  Since $H = H(X)$ is the essential image of $\Omega : \mathcal R \to G$, we have $\nu(\Omega^{-1}(G \setminus H)) = 0$, that is, $\Omega(x,y) \in H$ for almost every $(x, y) \in \cR$. Hence there exists a co-negligible $\mathcal R$-invariant Borel subset subset $X_0 \subset X$ such that
  \begin{itemize}
  \item $\Omega(\mathcal R |_{X_0}) \subset H$ and
  \item $\Omega$ is a strict cocycle on $\cR|_{X_0}$.
  \end{itemize}
 Let $U \subset X_0$ be a non-negligible Borel subset. We have $\Omega(\mathcal R |_U) \subset H$ and since $H = H(U)$ is the essential image of the restriction $\Omega_U : \mathcal R |_U \to G$, it follows that $\overline{\Omega(\mathcal R |_U)} = H$. This finishes the proof.
\end{proof}

We will use the notation $H(\Omega) = H(X)$ in order to refer to the group $H$ defined by the ergodic 1-cocycle $\Omega$.  Moreover, $H(\Omega)(U) = H(U)$ as introduced in the previous proof is used for the rest of this section.  We already saw that for an ergodic cocycle $\Omega$, the group $H(\Omega)(U)$ does not depend on the choice of the non-negligible Borel subset $U \subset X$.  The next proposition gives a strengthening of this result, showing also that $H(\Omega)(U)$ does only depend on the cohomology class of $\Omega$ as long as we assume ergodicity.  

\begin{proposition}
  \label{prop:independence-of-subset-and-representant}
    Let $\cR$ be an ergodic pmp equivalence relation defined on $(X, \mu)$ and ${\Omega_1, \Omega_2: \cR \ra G}$ be cohomologous $1$-cocycles with values in a non-Archimedian Polish group.  Assume that for all $i \in \{1, 2\}$ there is a non-negligible Borel subset $U_i \subset X$ such that $\Omega_i$ is ergodic for $\cR|_{U_i}$. Then $H(\Omega_1)(U_1)$ and $H(\Omega_2)(U_2)$ are conjugate in $G$.
\end{proposition}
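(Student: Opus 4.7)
The plan is to produce an element $d_0 \in G$ conjugating $H_1 := H(\Omega_1)(U_1)$ to $H_2 := H(\Omega_2)(U_2)$. Write $c : X \to G$ for a Borel function with $\Omega_2(x,y) = c(x) \Omega_1(x,y) c(y)^{-1}$ almost everywhere, and fix a decreasing neighbourhood basis $(K_n)_{n \in \NN}$ of $e \in G$ by open subgroups.

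I would first exploit ergodicity of $\cR$ to find a partial isomorphism $\phi \in [[\cR]]$ with non-negligible domain $V \subset U_2$ and with $\phi(V) \subset U_1$. The pulled-back cocycle $\Omega_1^\phi(x, y) := \Omega_1(\phi(x), \phi(y))$ on $\cR|_V$ then has essential image $H_1$: Proposition~\ref{prop:ergodic-cocycles-give-invariant} applied to $\Omega_1$ on the ergodic equivalence relation $\cR|_{U_1}$ shows that $\Omega_1(\cR|_{\phi(W)})$ is dense in $H_1$ for every non-negligible Borel $W \subset V$, because $\phi$ is measure preserving onto $\phi(V) \subset U_1$. The cocycle identity together with the cohomology between $\Omega_1$ and $\Omega_2$ yields a Borel map $d : V \to G$, $d(x) = c(x) \Omega_1(x, \phi(x))$, satisfying
\[ \Omega_2(x, y) = d(x) \, \Omega_1^\phi(x, y) \, d(y)^{-1} \quad \text{for a.e. } (x, y) \in \cR|_V \eqstop \]
Similarly, Proposition~\ref{prop:ergodic-cocycles-give-invariant} applied to $\Omega_2$ on $\cR|_{U_2}$ shows that $\Omega_2(\cR|_W)$ is dense in $H_2$ for every non-negligible Borel $W \subset V$.

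I would then construct $d_0$ by successive refinement. Since $G$ is Polish and each $K_n$ is an open subgroup, $G / K_n$ is countable, so every non-negligible Borel subset of $V$ admits a further non-negligible piece on which $d$ takes values in a single left coset of $K_n$. Iterating produces a decreasing sequence $W_1 \supset W_2 \supset \dotsm$ of non-negligible Borel subsets of $V$ and elements $d_n \in G$ with $d(W_n) \subset d_n K_n$ and $d_{n+1} K_{n+1} \subset d_n K_n$. In order to pin down a single element in the intersection, I would invoke Ulam's theorem at the outset to replace $V$ by a non-negligible subset on which $d$ takes values in some compact set $C \subset G$. Then $(d_n K_n \cap C)_n$ is a decreasing sequence of non-empty closed subsets of the compact space $C$, and any $d_0 \in \bigcap_n (d_n K_n \cap C)$ lies in $d_n K_n$ for every $n$.

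Finally, for a.e.\ $(x, y) \in \cR|_{W_n}$ one has $d(x) = d_0 k_x$ and $d(y) = d_0 k_y$ with $k_x, k_y \in K_n$, so
\[ d_0^{-1} \, \Omega_2(x, y) \, d_0 = k_x \, \Omega_1^\phi(x, y) \, k_y^{-1} \in K_n H_1 K_n \eqstop \]
Since $\Omega_2(\cR|_{W_n})$ is dense in $H_2$ and $K_n H_1 K_n$ is clopen in $G$, this forces $d_0^{-1} H_2 d_0 \subset K_n H_1 K_n$ for every $n$. The identity $\bigcap_n K_n H_1 K_n = H_1$, which holds because $H_1$ is closed and $(K_n)$ is a neighbourhood basis of $e$, then yields $d_0^{-1} H_2 d_0 \subset H_1$, and solving the cohomology for $\Omega_1^\phi$ gives the reverse inclusion by the same argument. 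I expect the main technical obstacle to be extracting a single conjugating element from the Borel function $d$; in the absence of local compactness of $G$ a naive Cauchy-style argument on $(d_n)$ may fail, and the compactness injected via Ulam's theorem seems essential.
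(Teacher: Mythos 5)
Your proof is correct and follows the same overall strategy as the paper's: transport one cocycle along a partial isomorphism so that both live on a common domain, locate an essential value $d_0$ of the resulting transfer function, and exploit the fact (Proposition~\ref{prop:ergodic-cocycles-give-invariant}) that the essential image is unchanged under passing to non-negligible subsets. The execution of the key step is packaged differently: the paper fixes an essential value $g_0$ of $c$ and, for each $g \in H(\Omega_2)$ and each $n$, produces a partial isomorphism supported on the positive-measure set $c^{-1}(K_m g_0)$ realising $g$ up to $K_m$, then verifies $g_0 g g_0^{-1} \in H_n(\Omega_1)$ by a direct coset computation; you instead conjugate the whole cocycle at once and squeeze $d_0^{-1} H_2 d_0$ into $\bigcap_n K_n H_1 K_n = H_1$, using that a union of left cosets of an open subgroup is clopen. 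Both arguments then get the reverse inclusion by symmetry, so the conclusions agree. Your explicit construction of $d_0$ via nested cosets and Ulam's theorem is sound, but can be shortened: the essential range of $d$ is the support of the push-forward probability measure $d_{\ast}\mu$ on the second countable space $G$, which is automatically non-empty, so no compactness is needed.
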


\begin{proof}
  Since $\cR$ is ergodic, there are non-negligible subsets $V_i \subset U_i$, $i \in \{1, 2\}$, and a partial isomorphism ${\psi:V_1 \ra V_2}$ in $[[\cR]]$.  Since $\Omega_1$ and $\Omega_2$ are cohomologous, there is a Borel map ${c:X \ra G}$ such that $\Omega_1(x,y) = c(x) \, \Omega_2(x,y) \, c(y)^{-1}$ for almost every $(x,y) \in \cR$.  Define the $1$-cocycle $\wt{\Omega_2} : \cR|_{V_1} \to G$ by the formula $\wt{\Omega_2}(x,y) = \Omega_2(\psi(x), \psi(y))$.  Since $\Omega_2$ is ergodic for $\mathcal R |_{V_2}$, it follows that $\wt{\Omega_2}$ is ergodic for $\mathcal R |_{V_1}$.  We also have $H(\wt{\Omega_2})(V_1) = H(\Omega_2)(V_2) = H(\Omega_2)(U_2)$, where the second equality follows from Proposition \ref{prop:ergodic-cocycles-give-invariant}.  Since moreover $\psi \in [[\cR]]$, we obtain
  \begin{align*}
    \wt{\Omega_2}(x,y)
    & =
    \Omega_2(\psi(x), \psi(y)) \\
    & =
    \Omega_2(\psi(x), x) \, \Omega_2(x,y) \, \Omega_2(y, \psi(y)) \\
    & =
    \Omega_2(\psi(x), x) \, c(x)^{-1} \, \Omega_1(x,y) \, c(y) \, \Omega_2(y, \psi(y))
  \end{align*}
  for almost every $(x, y) \in \mathcal R |_{V_1}$. Hence $\wt{\Omega_2}$ is cohomologous to ${\Omega_1} |_{\mathcal R |_{V_1}}$.  We can then replace $\Omega_2$ by $\wt{\Omega_2}$ and $X$ by $V_1$ in order to assume that $\Omega_1$ and $\Omega_2$ are cohomologous ergodic cocycles for $\cR$.

  Let $c: X \ra G$ be a Borel map such that $\Omega_1(x,y) = c(x) \, \Omega_2(x,y) \, c(y)^{-1}$ for almost every $(x,y) \in \cR$.  Let $g_0 \in G$ be an essential value of $c$.  We show that $g_0 H(\Omega_2) g_0^{-1} \subset H(\Omega_1)$.  If this is proven, the proposition follows by symmetry.

  Let $(K_n)_{n \in \NN}$ be a basis of neighbourhoods of $e$ in $G$ consisting of open subgroups.  For $i \in \{1, 2\}$, let 
  \begin{equation*}
    H_n(\Omega_i)
    =
    \{ g \in G \amid \exists \vphi \in [[\cR]]: \mu(\dom(\varphi)) > 0 \ \text{and} \ \Omega_i(x, \vphi(x)) \in g K_n \text{ for all } x \in \dom(\vphi)\}
  \end{equation*}
  be defined as in the proof of Proposition \ref{prop:ergodic-cocycles-give-invariant}.  Take $g \in H(\Omega_2)$.  We show that for all $n \in \NN$, we have $g_0 g g_0^{-1} \in H_n(\Omega_1)$.  So take $n \in \NN$ arbitrarily and $m \in \NN$ large enough such that
  \begin{gather*}
    ((g_0gg_0^{-1})^{-1} K_m g_0 g g_0^{-1}) (g_0 K_m g_0^{-1}) K_m \subset K_n
    \eqstop
  \end{gather*}
  Since $g_0$ is an essential value of $c$, the set $A = c^{-1}(K_m g_0)$ has positive measure.  Since $H(\Omega_2) = H(\Omega_2)(A)$, there is some $\vphi \in [[\cR|_A]]$ such that $\mu(\dom(\varphi)) > 0$ and $\Omega_2(x,\vphi(x)) \in g K_m$ for all $x \in \mathrm{dom}(\vphi)$.  Then for almost every $x \in \dom(\varphi)$, we have
  \begin{align*}
    \Omega_1(x,\vphi(x))
    & = c(x) \, \Omega_2(x, \vphi(x)) \, c(\vphi(x))^{-1} \\
    & \in K_m g_0 g K_m g_0^{-1} K_m \\
    & = (g_0gg_0^{-1}) ((g_0gg_0^{-1})^{-1} K_m g_0 g g_0^{-1}) (g_0 K_m g_0^{-1}) K_m \\
    & \subset g_0 g g_0^{-1} K_n
    \eqstop
  \end{align*}
  This shows that $g_0gg_0^{-1} \in H_n(\Omega_1)$.  Since $H(\Omega_1) = \bigcap_{n \in \NN} H_n(\Omega_1)$, we have finished the proof.  
\end{proof}

\subsection{Index cocycles}
\label{sec:index-cocycles}

In this section, we apply the concept of ergodicity to index cocycles in order to find stable orbit equivalence invariants for certain inclusions of ergodic pmp equivalence relations.  The concept of index cocycles was first introduced by Feldman, Sutherland and Zimmer in \cite{feldmansutherlandzimmer89-subrelations}. We recall the following well-known fact. For a proof, see \cite[Lemmas 1.1, 1.2 and 1.3]{feldmansutherlandzimmer89-subrelations}.
\begin{proposition}[\cite{feldmansutherlandzimmer89-subrelations}]
  \label{prop:existence-index-cocycle}
  Assume that $\cS \subset \cR$ is an inclusion of ergodic pmp equivalence relations defined on $(X, \mu)$.  There is a countable family $(c_i)_{i \in I}$ of choice functions in $[\cR]$ such that $\id_X \in \{c_i\}_{i \in I}$ and $[x]_\cR = \bigsqcup_{i \in I} [c_i(x)]_\cS$ for almost every $x \in X$.  The map $\Omega: \cR \ra \Sym(I)$ defined by
  \begin{gather*}
    \Omega(x,y)(i) = j \quad \Leftrightarrow \quad [c_j(x)]_\cS = [c_i(y)]_\cS
  \end{gather*}
  is a 1-cocycle.  The cohomology class of $\Omega$ does not depend on the choice of the family $(c_i)_{i \in I}$.
\end{proposition}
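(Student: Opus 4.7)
The plan is to construct the family $(c_i)_{i \in I}$ by a measurable selection argument, and then to verify the cocycle identity and cohomology invariance by direct bookkeeping.

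First, by the Feldman--Moore theorem \cite{feldmanmoore77}, realise $\cR$ as the orbit equivalence relation of a countable subgroup $\Gamma = \{\gamma_n : n \in \NN\} \leq [\cR]$ with $\gamma_0 = \id_X$. The number of $\cS$-classes contained in a single $\cR$-class is an $\cR$-invariant Borel function of $x$, hence almost everywhere equal to a constant $\kappa \in \NN \cup \{\infty\}$ by ergodicity of $\cR$; fix $I$ of cardinality $\kappa$. Then build the $c_k \in [\cR]$ recursively. Set $c_0 = \id_X$, and assume $c_0, \dots, c_{k-1}$ are already constructed with the $\cS$-classes $[c_0(x)]_\cS, \dots, [c_{k-1}(x)]_\cS$ pairwise disjoint a.e. Define the Borel function $n : X \to \NN$ by $n(x) = \min\{n : \gamma_n(x) \notin [c_0(x)]_\cS \cup \dots \cup [c_{k-1}(x)]_\cS\}$ on the Borel set where this minimum exists (which is co-negligible for $k < \kappa$). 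On each piece $B_n = \{x : n(x) = n\}$, the map $x \mapsto \gamma_n(x)$ is a Borel injection into $X$; patching these partial maps together and performing further exchanges inside $\cS$-classes (allowed since $[\cS] \leq [\cR]$), one produces an element $c_k \in [\cR]$ with $c_k(x) \sim_\cS \gamma_{n(x)}(x)$ for almost every $x$. Running this for $k < \kappa$ yields $[x]_\cR = \bigsqcup_{i \in I} [c_i(x)]_\cS$ almost everywhere.

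To define $\Omega$, note that for almost every $(x,y) \in \cR$ the two families $\{[c_i(x)]_\cS\}_{i \in I}$ and $\{[c_j(y)]_\cS\}_{j \in I}$ are repetition-free enumerations of the partition of the common $\cR$-class $[x]_\cR = [y]_\cR$ into $\cS$-classes, so for each $i \in I$ there is a unique $j \in I$ with $[c_j(x)]_\cS = [c_i(y)]_\cS$; Borel measurability of $\Omega : \cR \to \Sym(I)$ is inherited from that of $\cS$ and the $c_i$'s. The cocycle identity is immediate: if $\Omega(y,z)(i) = j$ and $\Omega(x,y)(j) = k$, then $[c_k(x)]_\cS = [c_j(y)]_\cS = [c_i(z)]_\cS$, whence $\Omega(x,z)(i) = k = (\Omega(x,y) \circ \Omega(y,z))(i)$.

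For cohomology invariance, let $(c'_i)_{i \in I}$ be a second such family; one may assume it is indexed by the same $I$ because $\kappa$ is intrinsic to the inclusion $\cS \subset \cR$. For almost every $x$ the two families enumerate the same partition of $[x]_\cR$, so there is a unique $b(x) \in \Sym(I)$ with $[c_i(x)]_\cS = [c'_{b(x)(i)}(x)]_\cS$, defining a Borel map $b : X \to \Sym(I)$. A direct unwinding of the defining identity yields $\Omega'(x,y) = b(x) \, \Omega(x,y) \, b(y)^{-1}$. I expect the main technical obstacle to lie in the patching step of the recursive construction, where the partial Borel injections $\gamma_n|_{B_n}$ must be assembled into a genuine element of the full group $[\cR]$; this is handled by standard applications of the Feldman--Moore machinery for shuffling within $\cS$-classes, and is precisely what is done in \cite[Lemmas 1.1--1.3]{feldmansutherlandzimmer89-subrelations}.
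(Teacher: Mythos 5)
Your proposal is correct and follows the same route as the paper, which proves this proposition simply by citing Lemmas 1.1--1.3 of Feldman--Sutherland--Zimmer: your verifications of the cocycle identity, of the Borel measurability of $\Omega$ and of $b$, and of the cohomology relation $\Omega'(x,y)=b(x)\,\Omega(x,y)\,b(y)^{-1}$ are all accurate, and the recursive selection scheme is the one underlying those lemmas. The one place where the sketch is looser than it may appear is the patching step: exchanges inside $\cS$-classes preserve which $\cS$-class each point selects and with what multiplicity, so upgrading $x\mapsto\gamma_{n(x)}(x)$ to a genuine automorphism $c_k\in[\cR]$ also requires arranging that every selected $\cS$-class is selected by infinitely many points of its $\cR$-class (a.e.\ class being countably infinite here); this extra bookkeeping is precisely what the cited lemmas of \cite{feldmansutherlandzimmer89-subrelations} supply, so deferring to them at that point is legitimate --- and is no more than the paper itself does for the entire statement.
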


Such a $1$-cocycle $\Omega : \mathcal R \to \Sym(I)$ given by Proposition \ref{prop:existence-index-cocycle} will be called an {\em index cocycle}. The next definition makes sense thanks to Propositions \ref{prop:independence-of-subset-and-representant} and \ref{prop:existence-index-cocycle}.

\begin{definition}
  \label{def:aperiodic-inclusions}
    An inclusion $\cS \subset \cR$ of ergodic pmp equivalence relations on $(X, \mu)$ is called {\em aperiodic} if it admits an ergodic index cocycle.
    If $\cS \subset \cR$ is an aperiodic inclusion of pmp equivalence relations, then we define $H(\cS \subset \cR) = H(\Omega)$, for some (or any) ergodic index cocycle $\Omega$ of $\cS \subset \cR$.
\end{definition}

In the next proposition, we show that $H(\cS \subset \cR)$ is a stable orbit equivalent invariant of the inclusion $\cS \subset \cR$.
\begin{proposition}
  \label{prop:aperiodicity-stable}
  Let $\cS \subset \cR$ be an aperiodic inclusion of ergodic pmp equivalence relations on $(X, \mu)$ with an ergodic index cocycle $\Omega : \mathcal R \to \Sym(I)$.  
  \begin{enumerate}
\item Let $\mathcal U \subset \mathcal T$ be an inclusion of ergodic pmp equivalence relations defined on $(Y, \eta)$ and $\varphi : (X, \mu) \to (Y, \eta)$ a pmp Borel automorphism such that $(\varphi \times \varphi)(\cS) = \mathcal U$ and $(\varphi \times \varphi)(\cR) = \mathcal T$. Then  the map $\Omega \circ (\varphi^{-1} \times \varphi^{-1}) : \mathcal T \to \Sym(I)$ is an ergodic index cocycle for the inclusion $\mathcal U \subset \mathcal T$. Therefore the inclusion $\mathcal U \subset \mathcal T$ is aperiodic and moreover we have $H(\cS \subset \cR) \cong H(\mathcal U \subset \mathcal T)$.
\item  Let $U \subset X$ be a non-negligible subset. Then the restriction $\Omega_U = \Omega |_{\mathcal R |_U} : \cR|_U \to \Sym(I)$ is an ergodic index cocycle for the inclusion $\cS|_U \subset \cR|_U$. Therefore the inclusion $\cS|_U \subset \cR|_U$  is aperiodic and moreover we have ${H(\cS|_U \subset \cR|_U)} \cong {H(\cS \subset \cR)}$.
\end{enumerate}
Therefore $H(\cS \subset \cR)$ is a stable orbit equivalent invariant of the inclusion $\cS \subset \cR$.
\end{proposition}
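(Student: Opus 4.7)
The plan is to prove each part by a direct construction: for (1), transport the choice functions through $\varphi$; for (2), use Borel selection to lift choice functions from $\cR$ to $\cR|_U$. Stable orbit equivalence invariance then follows by combining the two.

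For part (1), I would take choice functions $(c_i)_{i \in I}$ in $[\cR]$ associated with $\Omega$ and set $c'_i := \varphi \circ c_i \circ \varphi^{-1}$. Since $\varphi$ intertwines $\cS$ with $\cU$ and $\cR$ with $\cT$, the maps $c'_i$ lie in $[\cT]$ and form a valid family of choice functions for $\cU \subset \cT$. A direct computation shows that their index cocycle equals $\Omega \circ (\varphi^{-1} \times \varphi^{-1})$. Ergodicity is transported: for every open subgroup $K \leq \Sym(I)$, the relation $\{(y_1,y_2) \in \cT : \Omega \circ (\varphi^{-1} \times \varphi^{-1})(y_1,y_2) \in K\}$ is the $(\varphi \times \varphi)$-image of $\{(x_1,x_2) \in \cR : \Omega(x_1,x_2) \in K\}$, so invariant Borel sets correspond bijectively. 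The essential images then coincide, giving $H(\cU \subset \cT) = H(\cS \subset \cR)$.

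For part (2), I would first observe that aperiodicity forces $\cS$ itself to be ergodic: applying Definition \ref{def:approximately-ergodic-cocycles} to the open subgroup $\Stab(i_0) \leq \Sym(I)$, where $c_{i_0} = \id_X$, identifies $\cS$ with $\{(x,y) \in \cR : \Omega(x,y) \in \Stab(i_0)\}$, which has the same invariant Borel sets as the ergodic $\cR$. Ergodicity of $\cS$ implies that for a.e.\ $x \in U$ and every $i \in I$ the class $[c_i(x)]_\cS$ meets $U$. A standard Borel selection argument applied to the countable-to-one Borel map that sends a point to its $\cS$-class then yields measurable maps $c'_i \in [\cR|_U]$ satisfying $c'_i(x) \in [c_i(x)]_\cS \cap U$ for a.e.\ $x \in U$. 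Since $[x]_{\cR|_U} = \bigsqcup_i ([c_i(x)]_\cS \cap U)$ and each non-empty intersection is a single $\cS|_U$-class, the family $(c'_i)$ is a choice family for $\cS|_U \subset \cR|_U$, and its index cocycle coincides literally with $\Omega_U$.

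To finish (2), I must verify that $\Omega_U$ is ergodic. For every open $K \leq \Sym(I)$, the subequivalence relation $\cR_K := \{(x,y) \in \cR : \Omega(x,y) \in K\}$ is ergodic on $X$, by ergodicity of $\Omega$ together with ergodicity of $\cR$. I claim that $\cR_K|_U$ is ergodic on $U$: if $V \subset U$ is $\cR_K|_U$-invariant, the $\cR_K$-saturations in $X$ of $V$ and of $U \setminus V$ are both $\cR_K$-invariant, hence each has measure $0$ or $1$; if both had measure $1$ then for a.e.\ $x \in V$ some $\cR_K$-related $y \in U \setminus V$ would exist, contradicting $\cR_K|_U$-invariance of $V$. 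Thus one of $V$, $U \setminus V$ is null. Hence $\Omega_U$ is an ergodic index cocycle for $\cS|_U \subset \cR|_U$, and Proposition \ref{prop:ergodic-cocycles-give-invariant} yields $H(\cS|_U \subset \cR|_U) \cong H(\cS \subset \cR)$. The main technical subtlety I anticipate is the Borel measurability of the selection in the construction of the $c'_i$'s; the remaining verifications are routine cocycle bookkeeping.
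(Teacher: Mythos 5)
Your proof is correct and follows essentially the same route as the paper: part (1) by transporting the choice functions through $\varphi$, and part (2) by moving each $c_i(x)$ back into $U$ along its $\cS$-class so that the resulting index cocycle is literally $\Omega|_{\cR|_U}$, then invoking Proposition \ref{prop:ergodic-cocycles-give-invariant}. The only cosmetic difference is in part (2): where you appeal to a Borel selection (which, as stated, need not produce injective maps $c'_i$), the paper uses ergodicity of $\cS$ to pick partial isomorphisms $\vphi_i : c_i(U) \to U$ in $[[\cS]]$ and sets $d_i = (\vphi_i \circ c_i)|_U$, which yields genuine elements of $[\cR|_U]$ directly.
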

\begin{proof}
  By Proposition \ref{prop:existence-index-cocycle}, there is a family $(c_i)_{i \in I}$ of maps in $[\cR]$ such that $\id_X \in \{c_i\}_{i \in I}$, $[x]_\cR = \bigsqcup_{i \in I} [c_i(x)]_\cS$ for almost every $x \in X$, and the associated index cocycle $\Omega : \mathcal R \to \Sym(I)$ is ergodic. 
  
$(i)$ We have that the map $\Omega \circ (\varphi^{-1} \times \varphi^{-1}) : \mathcal T \to \Sym(I)$ is an ergodic index cocycle for the inclusion $\mathcal U \subset \mathcal T$ associated with the choice functions $\varphi \circ c_i \circ \varphi^{-1} \in [\mathcal T]$, $i \in I$. It follows that the inclusion $\mathcal U \subset \mathcal T$ is aperiodic and moreover we have $H(\cS \subset \cR) \cong H(\mathcal U \subset \mathcal T)$.

$(ii)$ Since $\cR$ is pmp and $\cS$ is ergodic, for every $i \in I$, there is a partial isomorphism $\vphi_i: c_i(U) \ra U$ in $[[\cS]]$.  Let $d_i = (\vphi_i \circ c_i)|_U$.  Then for every $i,j \in I$ and almost every $(x,y) \in \mathcal R |_U$, we have $d_i(x) \sim_\cS d_j(y)$ if and only if $c_i(x) \sim_\cS c_j(y)$. If we denote by $\wt{\Omega} : \mathcal R |_U \to \Sym(I)$ the index cocycle for the inclusion $\cS|_U \subset \cR|_U$ associated with $(d_i)_{i \in I}$, we have $\wt{\Omega}(x, y) = \Omega(x, y)$ for almost every $(x, y) \in \cR |_U$. This shows that $\Omega_U = \Omega |_{\mathcal R |_U} : \cR|_U \to \Sym(I)$ is an index cocycle for the inclusion $\cS|_U \subset \cR|_U$. Moreover, for every open subgroup $K \leq \Sym(I)$, we have
  \begin{equation*}
    \{(x, y) \in \mathcal R |_U \amid \Omega_U(x, y) \in K\} = \{(x, y) \in \mathcal R \amid \Omega(x, y) \in K\} |_U
    \eqstop
  \end{equation*}
  Since $\Omega$ is ergodic for $\mathcal R$, we get that $\Omega_U$ is ergodic for $\cR |_U$.  
  It also follows from Proposition \ref{prop:ergodic-cocycles-give-invariant} that
  \begin{equation*}
    H(\cS|_U \subset \cR|_U) \cong H(\Omega_U) = H(\Omega)(U) \cong H(\cS \subset \cR)
    \eqstop
  \end{equation*}
  This finishes the proof.
\end{proof}

\subsection{Calculation of $H(\cS \subset \cR)$ arising from actions of discrete Hecke pairs}
\label{sec:calculation-of-quotients}

In the following proposition, we calculate the natural index cocycle associated with an inclusion of discrete countable groups acting by pmp transformations on a standard probability space. Recall that for an inclusion of discrete countable groups $\Lambda \leq \Gamma$, the homomorphism $\tau_{\Gamma, \Lambda} : \Gamma \to \Sym(\Gamma / \Lambda)$ is given by left multiplication of $\Gamma$ on the countable  set $\Gamma / \Lambda$.
\begin{proposition}
  \label{prop:commensurable-subgroups-and-equivalence-relations}
  Let $\Lambda \leq \Gamma$ an inclusion of discrete countable groups and let $\Gamma \grpaction{} (X, \mu)$ be a free pmp action such that $\Lambda \grpaction{} (X, \mu)$ is ergodic.  One index cocycle of the inclusion ${\cR(\Lambda \grpaction{} X)} \subset {\cR(\Gamma \grpaction{} X)}$ is given by ${\Omega: \cR(\Gamma \grpaction{} X)} \ra \Sym(\Gamma / \Lambda)$ with $\Omega(g \cdot x, x)= \tau_{\Gamma, \Lambda}(g)$ for every $g \in \Gamma$ and almost every $x \in X$.
  \end{proposition}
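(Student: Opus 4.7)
The plan is to verify that $\Omega$ is a well-defined Borel $1$-cocycle and then to exhibit explicit choice functions in the sense of Proposition~\ref{prop:existence-index-cocycle} whose associated index cocycle agrees with $\Omega$. The first part is immediate from the hypotheses. Since $\Gamma \grpaction{} (X, \mu)$ is free, for $\mu$-almost every $x \in X$ and every $x' \in [x]_{\cR(\Gamma \grpaction{} X)}$ there is a unique $g \in \Gamma$ with $x' = g \cdot x$, so the formula $\Omega(x', x) = \tau_{\Gamma, \Lambda}(g)$ defines a Borel map $\cR(\Gamma \grpaction{} X) \to \Sym(\Gamma/\Lambda)$. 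For $(x, y, z) \in \cR(\Gamma \grpaction{} X)^{(2)}$ with $x = g \cdot y$ and $y = h \cdot z$ one has $x = gh \cdot z$, so the cocycle identity reduces to $\tau_{\Gamma, \Lambda}(gh) = \tau_{\Gamma, \Lambda}(g)\tau_{\Gamma, \Lambda}(h)$, which is the homomorphism property of $\tau_{\Gamma, \Lambda}$.

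For the second part I would fix a set of representatives $\{g_i : i \in I\}$ for $\Gamma/\Lambda$ indexed by $I = \Gamma/\Lambda$, with $g_{i_0} = e$ for the identity coset $i_0 = \Lambda$, and define $c_i : X \to X$ by $c_i(x) = g_i^{-1} \cdot x$. Each $c_i$ lies in $[\cR(\Gamma \grpaction{} X)]$ and $c_{i_0} = \id_X$. The decomposition $[x]_{\cR(\Gamma \grpaction{} X)} = \bigsqcup_{i \in I} [c_i(x)]_{\cR(\Lambda \grpaction{} X)}$ holds for almost every $x$, by writing any $h \in \Gamma$ uniquely as $h = \lambda g_i^{-1}$ with $\lambda \in \Lambda$ and $g_i$ the chosen representative of the coset $h^{-1}\Lambda$ (this gives covering), and by using freeness of the action to translate any $\cR(\Lambda \grpaction{} X)$-coincidence into an equation in $\Gamma$, so that $g_j^{-1} g_i \in \Lambda$ forces $i = j$ (this gives disjointness).

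Finally, I would compute the associated index cocycle. For $(x, y) \in \cR(\Gamma \grpaction{} X)$ with $x = g \cdot y$, one has $c_j(x) = g_j^{-1} g \cdot y$ and $c_i(y) = g_i^{-1} \cdot y$, so $[c_j(x)]_{\cR(\Lambda \grpaction{} X)} = [c_i(y)]_{\cR(\Lambda \grpaction{} X)}$ is equivalent, by freeness, to $g_j^{-1} g = \lambda g_i^{-1}$ for some $\lambda \in \Lambda$, and hence to $g g_i \in g_j \Lambda$, i.e.\ $\tau_{\Gamma, \Lambda}(g)(g_i \Lambda) = g_j \Lambda$. Under the identification $I \cong \Gamma/\Lambda$ via $i \leftrightarrow g_i \Lambda$, this is exactly the statement that the index cocycle attached to $(c_i)_{i \in I}$ sends $(x, y)$ to $\tau_{\Gamma, \Lambda}(g) = \Omega(x, y)$.

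The only genuine subtlety is the bookkeeping of left versus right cosets: because $\tau_{\Gamma, \Lambda}$ is defined via \emph{left} multiplication on $\Gamma/\Lambda$, the choice functions have to use $g_i^{-1} \cdot x$ rather than $g_i \cdot x$, otherwise one would recover the transpose action on $\Gamma/\Lambda$ instead of $\tau_{\Gamma, \Lambda}$ itself. I expect this convention check to be the only non-trivial point; once it is settled, the proof is a direct unwinding of the definitions of index cocycle and of $\tau_{\Gamma, \Lambda}$.
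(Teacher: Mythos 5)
Your proposal is correct and follows essentially the same route as the paper: both choose coset representatives $g_i$ of $\Gamma/\Lambda$, use the choice functions $c_i(x) = g_i^{-1}\cdot x$ so that $[x]_{\cR(\Gamma \grpaction{} X)} = \bigsqcup_i \Lambda g_i^{-1}\cdot x$, and then unwind the definition of the index cocycle via freeness to land on $\tau_{\Gamma,\Lambda}(g)$. The left/right coset bookkeeping you flag is precisely the point the paper handles by taking $c_{g\Lambda}(x) = s(g\Lambda)^{-1}\cdot x$.
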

\begin{proof}
  Take representatives $s(g \Lambda )_{g \Lambda \in \Gamma / \Lambda}$ of right $\Lambda$-cosets and let the Borel maps $c_{g \Lambda}: X \ra X$, $g \Lambda \in \Gamma / \Lambda$, be defined by $c_{g \Lambda}(x) = s(g \Lambda)^{-1} \cdot x$.  Then we have 
  \begin{gather*}
    [x]_{\cR(\Gamma \grpaction{} X)}
    =
    \Gamma \cdot x
    =
    \bigsqcup_{\Lambda g^{-1} \in \Lambda \backslash \Gamma} \Lambda g^{-1} \cdot x
    =
    \bigsqcup_{g \Lambda \in \Gamma / \Lambda} [c_{g \Lambda} (x)]_{\cR(\Lambda \grpaction{} X)}
    \eqstop
  \end{gather*}
  The index cocycle associated with $(c_{g \Lambda})_{g \Lambda \in \Gamma / \Lambda}$ is given by
  \begin{gather*}
    \Omega(g \cdot x, x)(g_1 \Lambda) = g_2 \Lambda
    \quad \Leftrightarrow \quad
    [c_{g_1 \Lambda}(x)]_{\cR(\Lambda \grpaction{} X)} = [c_{g_2 \Lambda}(g \cdot x)]_{\cR(\Lambda \grpaction{} X)}
    \quad \Leftrightarrow \quad
    \Lambda g_1^{-1} \cdot x = \Lambda g_2^{-1} g \cdot x
    \eqstop
  \end{gather*}
  Since $\Gamma \grpaction{} (X, \mu)$ is free, we obtain that
  \begin{equation*}
    \Omega(g \cdot x, x)(g_1 \Lambda) = g_2 \Lambda
    \quad \Leftrightarrow \quad
    \Lambda g_1^{-1} = \Lambda g_2^{-1} g
    \quad \Leftrightarrow \quad
    g g_1 \Lambda = g_2 \Lambda
    \eqstop
  \end{equation*} 
 Therefore, $\Omega(g \cdot x, x)= \tau_{\Gamma, \Lambda}(g)$ for every $g \in \Gamma$ and almost every $x \in X$.
\end{proof}

Actions of discrete Hecke pairs give examples for which the invariant $H(\cS \subset \cR)$ can be calculated.  This is shown in the following proposition.
\begin{proposition}
  \label{prop:quotients-from-actions}
  Let $\Lambda \leq \Gamma$ be a commensurated subgroup and $\Gamma \grpaction{} (X, \mu)$  a free pmp action such that the action $\Lambda \curvearrowright (X, \mu)$ is aperiodic.  Then the inclusion $\cR(\Lambda \grpaction{} X) \subset \cR(\Gamma \grpaction{} X)$ is aperiodic and $H(\cR(\Lambda \grpaction{} X) \subset \cR(\Gamma \grpaction{} X)) \cong \Gamma \rp \Lambda$.
\end{proposition}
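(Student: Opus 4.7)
The plan is to run the ergodic-cocycle machinery of Section~\ref{sec:invariant} on the concrete index cocycle produced by Proposition~\ref{prop:commensurable-subgroups-and-equivalence-relations}: first verify ergodicity, then identify the essential image with $\Gamma \rp \Lambda$.

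First I would set up the cocycle. By Proposition~\ref{prop:commensurable-subgroups-and-equivalence-relations}, an index cocycle of the inclusion $\cR(\Lambda \grpaction{} X) \subset \cR(\Gamma \grpaction{} X)$ is the map $\Omega \colon \cR(\Gamma \grpaction{} X) \ra \Sym(\Gamma / \Lambda)$ defined by $\Omega(g \cdot x, x) = \tau_{\Gamma, \Lambda}(g)$, which is well-defined thanks to the freeness of $\Gamma \grpaction{} X$. Note that $\cR(\Gamma \grpaction{} X)$ is ergodic, since aperiodicity of $\Lambda \grpaction{} X$ implies in particular that $\Lambda$, and hence $\Gamma$, acts ergodically.

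Next I would check that $\Omega$ is ergodic in the sense of Definition~\ref{def:approximately-ergodic-cocycles}. Let $K \leq \Sym(\Gamma / \Lambda)$ be an open subgroup. Since $\Sym(\Gamma / \Lambda)$ carries the topology of pointwise convergence, $K$ contains the pointwise stabiliser $\Stab(F)$ of a finite subset $F \subset \Gamma / \Lambda$; after enlarging $F$, we may assume $\Lambda \in F$ and write $F = \{\Lambda, g_2 \Lambda, \dotsc, g_n \Lambda\}$. A direct computation gives
\begin{equation*}
\tau_{\Gamma, \Lambda}^{-1}(\Stab(F))
= \Lambda \cap g_2 \Lambda g_2^{-1} \cap \dotsm \cap g_n \Lambda g_n^{-1}
\eqcomma
\end{equation*}
which is a finite-index subgroup $\Lambda_0 \leq \Lambda$ by commensuration. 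Therefore
\begin{equation*}
\{(x, y) \in \cR(\Gamma \grpaction{} X) \amid \Omega(x, y) \in K\}
\supset \cR(\Lambda_0 \grpaction{} X)
\eqstop
\end{equation*}
By aperiodicity, $\Lambda_0$ acts ergodically on $(X, \mu)$, so the right-hand (and hence the left-hand) equivalence relation is ergodic, which means it shares the same invariant Borel sets as $\cR(\Gamma \grpaction{} X)$. This is precisely ergodicity of $\Omega$, and so the inclusion is aperiodic.

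Finally I would compute $H(\Omega)$. From the proof of Proposition~\ref{prop:ergodic-cocycles-give-invariant}, $H(\Omega)$ equals $H(X)$, which is the essential image of $\Omega \colon (\cR(\Gamma \grpaction{} X), \nu) \ra \Sym(\Gamma / \Lambda)$. For each $g \in \Gamma$ the graph $V_g = \{(g \cdot x, x) \amid x \in X\}$ has $\nu(V_g) = \mu(X) = 1$ (again invoking freeness) and $\Omega \equiv \tau_{\Gamma, \Lambda}(g)$ on $V_g$. Hence the essential image contains $\tau_{\Gamma, \Lambda}(\Gamma)$ and, being closed, it contains $\overline{\tau_{\Gamma, \Lambda}(\Gamma)} = \Gamma \rp \Lambda$. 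Conversely, $\Omega$ takes all of its values in $\tau_{\Gamma, \Lambda}(\Gamma) \subset \Gamma \rp \Lambda$, so the essential image is exactly $\Gamma \rp \Lambda$. Combining with Definition~\ref{def:aperiodic-inclusions} concludes the proof.

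The technical heart is the ergodicity verification in the second step: the whole argument hinges on the elementary observation that open subgroups of $\Sym(\Gamma / \Lambda)$ pull back through $\tau_{\Gamma, \Lambda}$ to finite-index subgroups of $\Lambda$, which is precisely what the discrete Hecke pair hypothesis provides; aperiodicity then upgrades this finite index to ergodicity of the pulled-back subequivalence relation.
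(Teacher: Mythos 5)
Your proposal is correct and follows essentially the same route as the paper: it takes the explicit index cocycle $\Omega(g\cdot x,x)=\tau_{\Gamma,\Lambda}(g)$ from Proposition~\ref{prop:commensurable-subgroups-and-equivalence-relations}, checks ergodicity by pulling open subgroups of $\Sym(\Gamma/\Lambda)$ back to finite-index subgroups $\bigcap_i g_i\Lambda g_i^{-1}$ of $\Lambda$ and invoking aperiodicity, and identifies $H(\Omega)$ with the essential image $\overline{\tau_{\Gamma,\Lambda}(\Gamma)}=\Gamma\rp\Lambda$. The only cosmetic difference is that you treat an arbitrary open subgroup via pointwise stabilisers of finite sets rather than fixing the specific neighbourhood basis $(K_n)$ as the paper does, and you spell out the essential-image computation slightly more explicitly.
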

\begin{proof}
  By Proposition \ref{prop:commensurable-subgroups-and-equivalence-relations}, choose the index cocycle $\Omega: \cR(\Gamma \grpaction{} X) \ra \Sym(\Lambda \backslash \Gamma)$ defined by $\Omega(g \cdot x, x)= \tau_{\Gamma, \Lambda}(g)$ for every $g \in \Gamma$ and almost every $x \in X$.  Let $g_1, g_2, \dotsc $ be an enumeration of representatives for $\Gamma / \Lambda$ with $g_1 = e$.  Consider the basis of neighbourhoods of $\id_{\Gamma/\Lambda}$ in $\Sym(\Gamma / \Lambda)$ given by $K_n = {\{\sigma \in \Sym(\Gamma/\Lambda) \amid \sigma(g_i \Lambda) = g_i \Lambda \text{ for all } 1 \leq i \leq n\}}$, $n \geq 1$.  We have
  \begin{align*}
    \cR_n
    & = \{ (x,y) \in \mathcal R(\Gamma \curvearrowright X) \amid \Omega(x,y) \in K_n \} \\
    & = \{(g \cdot x, x)  \in \mathcal R(\Gamma \curvearrowright X) \amid g \in g_i \Lambda g_i^{-1} \text{ for all } 1 \leq i \leq n\}
    \eqstop
  \end{align*}
  Since $\bigcap_{1 \leq i \leq n} g_i \Lambda g_i^{-1} \leq \Lambda$ is a finite index subgroup, $\cR_n$ is ergodic. Therefore $\Omega$ is an ergodic cocycle and $\cR(\Lambda \grpaction{} X) \subset \cR(\Gamma \grpaction{} X)$ is aperiodic. It also follows from Proposition \ref{prop:ergodic-cocycles-give-invariant} that 
  \begin{equation*}
    H(\cR(\Lambda \grpaction{} X) \subset \cR(\Gamma \grpaction{} X))
    = H(\Omega)
    = \ol{\tau_{\Gamma, \Lambda}(\Gamma)}
    = \Gamma \rp \Lambda. \qedhere
  \end{equation*} 
\end{proof}

\section{A stable orbit equivalence rigidity theorem for products of Baumslag-Solitar groups}
\label{sec:rigidity}

In this section, we apply the invariant developed in Section~\ref{sec:invariant} to prove Theorem \ref{thm:main-result}. The next theorem is a straightforward relative version of \cite[Theorem 5.1]{kida11-BS}.

We will need the following terminology. Let $\mathcal R$ be a pmp equivalence relation defined on $(X, \mu)$, $H$ a Polish group together with a Borel action $H \curvearrowright Z$ on a standard Borel space and $\sigma : \mathcal R \to H$ a $1$-cocycle. A Borel map $\varphi : X \to Z$ is said to be $(\mathcal R, \sigma)$-invariant if $\varphi(x) = \sigma(x, y) \varphi(y)$ for almost every $(x, y) \in \mathcal R$. 

To any infinite locally finite simplicial tree $T$, one can associate the {\em topological boundary} of $T$ as the set $\partial T$ of all equivalence classes of infinite geodesic rays in $T$ where two geodesic rays in $T$ are {\em equivalent} if the Hausdorff distance between them is finite. The boundary $\partial T$ is canonically endowed with a Hausdorff topology which makes it a compact space. Moreover, any action $\Gamma \curvearrowright T$ of a discrete countable group by simplicial automorphisms induces an action by homeomorphisms $\Gamma \curvearrowright \partial T$.

\begin{theorem}
  \label{thm:relative-amenability}
  Let $\Gamma$ be any non-amenable discrete countable group acting on a tree $T$ with amenable stabilisers and without edge inversions and $\Lambda$ any discrete countable group. Put $G = \Gamma \times \Lambda$ and denote by $\pi : G \to \Gamma : (\gamma, \lambda) \mapsto \gamma$ the quotient homomorphism.  Let $G \grpaction{} (X, \mu)$ be any free pmp action.  Write $\cR = \cR(G \grpaction{} X)$ for the orbit equivalence relation.  Denote by $\rho : \cR \ra \Gamma$ the $1$-cocycle defined by $\rho(g \cdot x, x) = \pi(g)$ for every $g \in G$ and almost every $x \in X$.

 For every non-negligible Borel subset $U_0 \subset X$ and every amenable commensurated subequivalence relation $\cS \subset \cR |_{U_0}$, there exists an $(\cS, \rho)$-invariant Borel map $\vphi: U_0 \to V(T)$.
\end{theorem}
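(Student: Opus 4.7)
The plan is to transpose the strategy of Kida's \cite[Theorem 5.1]{kida11-BS} to the present relative setting, combining an amenability argument with the commensuration hypothesis. The starting point is that, since $\Gamma$ acts on $T$ with amenable vertex stabilisers, the boundary action $\Gamma \curvearrowright \partial T$ is topologically amenable in the sense of Anantharaman-Delaroche--Renault. The amenability of $\cS$, via the $1$-cocycle $\rho|_\cS \colon \cS \to \Gamma$, then allows the standard ``amenability $\Rightarrow$ equivariant map to probability measures'' machinery to produce a Borel $(\cS, \rho|_\cS)$-equivariant map
\begin{equation*}
  \psi \colon U_0 \to \Prob(\partial T)
  \eqstop
\end{equation*}

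I would next distil from $\psi$ an $(\cS, \rho)$-equivariant Borel map $\bar\vphi \colon U_0 \to V(T) \cup \partial T$ by applying an $\Aut(T)$-equivariant Borel selection $\Prob(\partial T) \to V(T) \cup \partial T$. Concretely, using the classical ``flow'' construction, each $\mu \in \Prob(\partial T)$ orients every edge of $T$ toward the side bearing the larger $\mu$-mass; when $\mu$ is not essentially a Dirac mass at a boundary point, this orientation admits a canonical sink vertex in $V(T)$, and otherwise the selection lies in $\partial T$. Ties and edge-valued outputs are handled by passing to a barycentric subdivision of $T$ and using midpoint conventions.

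The main obstacle is to exclude boundary values, which is where the commensuration of $\cS$ by $\cR|_{U_0}$ and the non-amenability of $\Gamma$ enter. Suppose for contradiction that $\bar\vphi$ is boundary-valued on a non-negligible $\cS$-saturated Borel subset $W \subset U_0$. By \cite[Lemma 3.18]{kida11-BS}, $\cS|_W$ is commensurated by $\cR|_W$, so for every $\vphi' \in \QN_{\cR|_W}(\cS|_W)$ the formula $x \mapsto \rho(x, \vphi'(x)) \bar\vphi(\vphi'(x))$ produces another boundary-valued equivariant map for the commensurable relation $(\vphi' \times \vphi')(\cS|_{\dom \vphi'})$. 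Standard finite-index averaging then assembles all these maps into a Borel $(\cR|_W, \rho)$-equivariant map
\begin{equation*}
  \tilde\vphi \colon W \to \mathcal{P}_{\mathrm{fin}}(\partial T)
  \eqstop
\end{equation*}
When $|\tilde\vphi(x)| \geq 3$, the convex hull of $\tilde\vphi(x)$ in $T \cup \partial T$ has a canonical barycentric vertex in $V(T)$, which yields an $(\cS, \rho)$-equivariant vertex-valued map on $W$, contradicting boundary-valuedness of $\bar\vphi$ on $W$. When $|\tilde\vphi(x)| \leq 2$, the $\Gamma$-stabiliser of $\tilde\vphi(x)$ is amenable, being an increasing union of vertex stabilisers along a geodesic ray or bi-infinite axis; a Zimmer-type argument then forces $\rho$ on $\cR|_W$ to take values in an amenable subgroup of $\Gamma$, which, together with the freeness of the $G$-action on $(X, \mu)$, contradicts the non-amenability of $\Gamma$. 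Either way, $\bar\vphi$ must take values in $V(T)$ almost everywhere, completing the proof.
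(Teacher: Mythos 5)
Your strategy is the right one in outline (it is essentially the paper's, which follows Kida's proof of \cite[Theorem 5.1]{kida11-BS}), but two steps are asserted that do not hold as written. The first is the claim that ``standard finite-index averaging'' assembles the translates $x \mapsto \rho(x,\vphi'(x))\bar\vphi(\vphi'(x))$, $\vphi' \in \QN_{\cR|_W}(\cS|_W)$, into an $(\cR|_W,\rho)$-equivariant map with values in \emph{finite} subsets of $\partial T$. To pass from $\cS$-invariance to $\cR|_W$-invariance you must take the union over a countable family of quasi-normalising elements generating $\cR|_W$ together with all their compositions, and nothing in your argument bounds the cardinality of that union. The paper's proof gets finiteness by reversing your order of operations: it first reduces to a map into $\partial_2 T$ (sets of at most two boundary points), using the dichotomy ``a measure on $\partial T$ not supported on at most two points has a canonical nonempty finite set of vertices'' (Kida's Lemma 5.3) together with an initial gluing reduction to the case where \emph{no} non-negligible subset of $U$ carries a vertex-valued invariant map; it then chooses $\vphi$ so as to maximise the measure of the two-point locus, and Kida's Lemma 5.4 shows that every translate of $\vphi$ by a quasi-normalising element is \emph{contained in}, hence equal to, $\vphi$ itself. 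That containment is what keeps the set two-pointed at every stage; your proposal never establishes it, never makes the maximal choice, and never sets up the gluing reduction without which ``a vertex-valued map exists on some non-negligible subset'' is not yet the statement of the theorem.

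The second problem is the endgame. Reducing the cocycle $\rho$ to the (indeed amenable) stabiliser of a point or pair of points of $\partial T$ via ``a Zimmer-type argument'' requires the action $\Gamma \curvearrowright \partial_2 T$ to be smooth, and boundary actions on trees are typically not smooth (already for a free group acting on its Cayley tree), so this step is not available; moreover, even granting it, deducing a contradiction from ``$\rho$ takes values in an amenable subgroup'' needs an ergodicity-type input that the theorem does not assume. The paper argues differently: the $(\cR(\Gamma \curvearrowright X),\rho)$-equivariant map into $\partial_2 T$, integrated against $\mu$, produces a $\Gamma$-invariant probability measure on $\partial_2 T$; since the vertex stabilisers are amenable, $\Gamma \curvearrowright \partial T$ is topologically amenable and hence $\Gamma \curvearrowright \partial_2 T$ is measurably amenable, and an amenable action admitting an invariant probability measure forces $\Gamma$ to be amenable \cite[Proposition 4.3.3]{zimmer84}. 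Note that this is where the amenability of the stabilisers is actually used; it is not needed to produce the initial map $U_0 \to \Prob(\partial T)$, which follows from amenability of $\cS$ alone. (Two smaller points: $\Stab_\Gamma(\xi)$ for $\xi \in \partial T$ is not an increasing union of vertex stabilisers, as it contains hyperbolic elements, though it is amenable; and your selection $\Prob(\partial T) \to V(T) \cup \partial T$ cannot avoid outputs in $\partial_2 T$, e.g.\ for a measure equidistributed on two boundary points, so the target must be enlarged as in the paper.)
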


\begin{proof}
We proceed by contradiction following the lines of \cite[Theorem 5.1]{kida11-BS}. Since any $(\cS, \rho)$-invariant map from a Borel subset of $U_0$ to $V(T)$ can be extended to an $\cS$-invariant subset of $U_0$, we can glue such maps.  So we may assume that there exists a non-negligible $\cS$-invariant Borel subset $U \subset U_0$ such that for every non-negligible Borel subset $W \subset U$ there is no $(\mathcal S |_W, \rho)$-invariant Borel map $\vphi : W \to V(T)$.

Since $\mathcal S |_U \subset \mathcal R |_U$ is amenable, \cite[Proposition 4.14]{kida09} yields an $(\mathcal S, \rho)$-invariant Borel map ${\vphi : U \to \Prob(\partial T)}$.  Denote by $\partial_2 T$ the quotient of $\partial T \times \partial T$ by the action of the symmetric group $\Sym(2)$ that exchanges the two coordinates.  We consider elements of $\partial_2 T$ as subsets of $\partial T$ of size one or two.

  If $\vphi(x)$ is not almost everywhere supported on at most two elements, then, using the fact that $\Gamma$ acts on $T$ without inversions, the proof of \cite[Lemma 5.3]{kida11-BS} shows that $\vphi$ induces an $(\mathcal S, \rho)$-invariant map from a non-negligible subset of $U$ to $V(T)$.  This contradicts the choice of $U$.  So $\vphi$ induces an $(\mathcal S, \rho)$-invariant map $\vphi : U \to \partial_2 T$.  We may moreover choose $\vphi$ so that the measure of the Borel subset $\{x \in U \amid |\supp(\vphi(x))| = 2\}$ is maximal among all the $(\mathcal S, \rho)$-invariant maps $U \to \partial_2 T$.

  Now \cite[Lemma 5.4]{kida11-BS} says that whenever $W \subset U$ is a non-negligible Borel subset and $\mathcal T \subset \mathcal S |_W$ is a finite index subequivalence relation, any $(\mathcal T, \rho)$-invariant Borel map $\psi : W \to \partial_2 T$ satisfies $\psi(x) \subset \vphi(x)$ for almost every $x \in W$.

  Next, we claim that $\vphi : U \to \partial_2 T$ is $(\mathcal R |_U, \rho)$-invariant. Since $\cS |_U \subset \cR |_U$ is commensurated, it suffices to show that for every ${\theta \in \QN_{\mathcal R |_U}(\mathcal S |_U)}$ we have $\vphi(\theta(x)) = \rho(\theta(x), x) \vphi(x)$ for almost every $x \in \dom(\theta)$.  Fix $\theta \in \QN_{\mathcal R |_U}(\mathcal S |_U)$ and define a new Borel map $\psi : \dom(\theta) \to \partial_2 T$ by the formula $\psi(x) = \rho(x, \theta(x)) \vphi(\theta(x))$.  Put $\mathcal T = {(\theta^{-1} \times \theta^{-1})(\mathcal S |_{\rng(\theta)})} \cap \mathcal S |_{\dom (\theta)}$, which is a finite index subequivalence relation of $\mathcal S |_{\dom(\theta)}$.  For almost every $(x, y) \in \mathcal T$, we have
  \begin{align*}
    \rho(y, x)  \psi(x) & = \rho(y, x)  \rho(x, \theta(x))  \vphi(\theta(x)) \\
    & = \rho(y, \theta(x))  \vphi(\theta(x)) \\
    & = \rho(y, \theta(y))  \rho(\theta(y), \theta(x))  \vphi(\theta(x)) \\
    & =  \rho(y, \theta(y))  \vphi(\theta(y)) = \psi(y).
  \end{align*}
  Since $\psi$ is $(\mathcal T, \rho)$-invariant, we get $\psi(x) \subset \vphi(x)$ for almost every $x \in \dom(\theta)$.  Likewise, define $\psi' : \rng(\theta) \to \partial_2 T$ by $\psi'(y) = \rho(y , \theta^{-1}(y)) \vphi(\theta^{-1}(y))$. A similar reasoning shows that $\psi'(y) \subset \vphi(y)$ for almost every $y \in \rng(\theta)$.

  Then for almost every $x \in \dom(\theta)$, we have
  \begin{align*}
    \rho(x, \theta(x)) \vphi(\theta(x))
    & = \psi(x) \\
    & \subset \vphi(x) \\
    & = \rho(x, \theta(x)) \psi'(\theta(x)) \\
    & \subset \rho(x, \theta(x)) \vphi(\theta(x))
  \end{align*}
  Therefore $\vphi(x) = \rho(x, \theta(x)) \vphi(\theta(x))$ for almost every $x \in \dom(\theta)$.  This proves our claim.

  We get in particular that the map $\vphi : U \to \partial_2 T$ is $(\mathcal R(\Gamma \grpaction{} X) |_U, \rho)$-invariant.  We may uniquely extend $\vphi$ to a $(\mathcal R(\Gamma \curvearrowright X), \rho)$-invariant map from the $\mathcal R(\Gamma \curvearrowright X)$-saturation of $U$ (which is equal to $\Gamma U$) into $\partial_2 T$. Since $\rho(\gamma \cdot x , x) = \gamma$ for every $\gamma \in \Gamma$ and almost every $x \in X$, the map $\varphi$ induces a $\Gamma$-invariant probability measure on $\partial_2 T$. However, since $\Gamma$ acts on $T$ with amenable stabilisers, it follows that the action $\Gamma \grpaction{} \partial T$ is topologically amenable (see \cite[Proposition 5.2.1, Lemma 5.2.6]{brownozawa08}).  Therefore the action $\Gamma \grpaction{} \partial T \times \partial T$ is measurably amenable and so is the action $\Gamma \grpaction{} \partial_2 T$ (see \cite[Corollary 3.4]{kida10}). By \cite[Proposition 4.3.3]{zimmer84}, we get that $\Gamma$ is amenable, which is a contradiction.
\end{proof}

\begin{corollary}
  \label{cor:embedding-of-amenable-subrelations}
  Let $n \geq 1$.  For every $i \in \{1, \dots, n\}$, let $\Gamma_i$ be a non-amenable discrete countable group acting on a tree $T_i$ with amenable stabilisers and without edge inversions.  Put $G = {\Gamma_1 \times \cdots \times \Gamma_n}$ and let $G \grpaction{} (X, \mu)$ be a free pmp action.  Let $U_0 \subset X$ be any non-negligible Borel subset and $\cS \subset \cR(G \grpaction{} X)|_{U_0}$ any amenable commensurated subequivalence relation.  Then there exist a non-negligible Borel subset $U \subset U_0$ and vertices $v_i \in V(T_i)$ such that 
  \begin{equation*}
    \cS|_U \subset \cR(\Stab_{\Gamma_1}(v_1) \times \dotsm \times \Stab_{\Gamma_n}(v_n) \grpaction{} X) |_U
    \eqstop
  \end{equation*}
\end{corollary}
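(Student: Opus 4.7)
The plan is to apply Theorem~\ref{thm:relative-amenability} to each of the factors $\Gamma_i$ in turn, and then cut $U_0$ down to a common level set of the resulting boundary maps. Since amenability and commensuration of $\cS \subset \cR(G \grpaction{} X)|_{U_0}$ are intrinsic properties of the inclusion, they do not depend on how one factors $G$ as a product of two groups. So for each $i \in \{1, \dotsc, n\}$ I would view $G = \Gamma_i \times \prod_{j \neq i} \Gamma_j$ and apply Theorem~\ref{thm:relative-amenability} with $\Gamma = \Gamma_i$ (non-amenable, acting on $T_i$ with amenable stabilisers and without edge inversions) and with the role of $\Lambda$ played by $\prod_{j \neq i} \Gamma_j$. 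This produces an $(\cS, \rho_i)$-invariant Borel map $\vphi_i : U_0 \ra V(T_i)$, where $\rho_i : \cR(G \grpaction{} X) \ra \Gamma_i$ is the projection cocycle determined by $\rho_i((g_1, \dotsc, g_n) \cdot x, x) = g_i$.

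Next I would assemble the product map $\vphi = (\vphi_1, \dotsc, \vphi_n) : U_0 \ra \prod_{i=1}^n V(T_i)$. Its target is a countable set, since each $T_i$ is a countable tree. Because $\mu(U_0) > 0$, a routine countable pigeonhole argument then produces a tuple $(v_1, \dotsc, v_n) \in \prod_i V(T_i)$ for which $U := \vphi^{-1}((v_1, \dotsc, v_n)) \subset U_0$ has positive measure.

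To finish, I would read off the desired inclusion by unwinding what it means for $(x, y) \in \cS |_U$. By freeness of $G \grpaction{} (X, \mu)$, there is a unique $g = (g_1, \dotsc, g_n) \in G$ with $y = g \cdot x$. The $(\cS, \rho_i)$-invariance of $\vphi_i$ together with $\vphi_i|_U \equiv v_i$ then yields, for every $i$,
\begin{equation*}
  v_i = \vphi_i(y) = \rho_i(y, x) \, \vphi_i(x) = g_i \, v_i
  \eqcomma
\end{equation*}
so that $g_i \in \Stab_{\Gamma_i}(v_i)$. Consequently $(x, y)$ belongs to the orbit equivalence relation of $\Stab_{\Gamma_1}(v_1) \times \dotsm \times \Stab_{\Gamma_n}(v_n) \grpaction{} X$, restricted to $U$, which is exactly the conclusion we want.

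All of the substance sits inside Theorem~\ref{thm:relative-amenability}; once that relative version of the boundary-map construction is available, the iteration over the $n$ factors together with the countable pigeonhole step is essentially bookkeeping. The only point that deserves a brief explicit verification is that re-grouping $G$ as $\Gamma_i \times \prod_{j \neq i} \Gamma_j$ for different $i$ neither destroys amenability of $\cS$ nor its commensuration inside $\cR(G \grpaction{} X)|_{U_0}$, but this is immediate from the fact that both conditions are defined intrinsically from $\cS$ and the ambient equivalence relation, without reference to a product decomposition of $G$.
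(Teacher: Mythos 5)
Your argument is correct, and it leans on the same engine as the paper --- Theorem~\ref{thm:relative-amenability} applied once per tree factor, followed by passage to a positive-measure level set --- but the bookkeeping is organised differently. The paper proceeds by induction: at step $i+1$ it applies Theorem~\ref{thm:relative-amenability} to the \emph{subgroup} $\Gamma_{i+1} \times \bigl(\Stab_{\Gamma_1}(v_1) \times \dotsm \times \Stab_{\Gamma_i}(v_i) \times \Gamma_{i+2} \times \dotsm \times \Gamma_n\bigr)$ of $G$ and to the shrunken set $U_i$, which requires knowing that $\cS|_{U_i}$ is still commensurated inside the orbit equivalence relation of this \emph{smaller} group restricted to $U_i$ --- a point the paper asserts without comment and which is not a formal consequence of commensuration in $\cR(G \grpaction{} X)|_{U_0}$. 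Your parallel version avoids this entirely: each of your $n$ applications of Theorem~\ref{thm:relative-amenability} uses the regrouping $G = \Gamma_i \times \prod_{j \neq i}\Gamma_j$ of the \emph{full} group, so the hypothesis ``$\cS$ is amenable and commensurated in $\cR(G \grpaction{} X)|_{U_0}$'' is verbatim the hypothesis of the corollary, and the only remaining work is the countable pigeonhole on $\prod_i V(T_i)$ and the pointwise unwinding via freeness, both of which you carry out correctly (note $\rho_i(x,y) = g_i^{-1}$ rather than $g_i$ with your orientation conventions, but $v_i = g_i^{\pm 1} v_i$ gives $g_i \in \Stab_{\Gamma_i}(v_i)$ either way). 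The trade-off: the paper's induction produces the nested sets $U_0 \supset U_1 \supset \dotsm \supset U_n$ and the intermediate inclusions explicitly, which can be convenient elsewhere, whereas your version is shorter and rests on strictly weaker intermediate claims.
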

\begin{proof}
  We apply Theorem \ref{thm:relative-amenability} inductively.  Assume that there is $i \in \{0, \dotsc, n-1\}$, vertices ${v_1 \in V(T_1)}, \dots, {v_i \in V(T_i)}$ and a non-negligible Borel subset $U_i \subset U_0$ such that
  \begin{equation*}
    \cS|_{U_i} \subset \cR(\Stab_{\Gamma_1}(v_1) \times \dotsm \times \Stab_{\Gamma_i}(v_i) \times \Gamma_{i + 1} \times \dotsm \times \Gamma_{n} \grpaction{} X) |_{U_i}
    \eqstop
  \end{equation*}
  Put
  \begin{equation*}
    \Lambda = \Stab_{\Gamma_1}(v_1) \times \dotsm \times \Stab_{\Gamma_i}(v_i) \times \Gamma_{i + 2} \times \dotsm \times \Gamma_n
  \end{equation*}
  and $\Gamma = \Gamma_{i+1}$. Denote by $\pi : \Gamma \times \Lambda \to \Gamma : (\gamma, \lambda) \mapsto \gamma$ the quotient homomorphism and by $\rho : \cR(\Gamma \times \Lambda \curvearrowright X) \ra \Gamma$ the 1-cocycle defined by $\rho(g \cdot x, x) = \pi(g)$ for every $g \in \Gamma \times \Lambda$ and almost every $x \in X$. Since $\cS|_{U_i}$ is amenable and commensurated by $\cR( \Gamma \times \Lambda \grpaction{} X)|_{U_i}$, Theorem~\ref{thm:relative-amenability} shows that there is an $(\cS|_{U_i}, \rho)$-invariant map $\vphi: U_{i} \ra V(T_{i+1})$.  Take some vertex $v_{i + 1} \in T_{i+1}$ such that $U_{i+1} = \vphi^{-1}(v_{i+1})$ is non-negligible.  Since $\vphi$ is $(\cS|_{U_i}, \rho)$-invariant, it follows that
  \begin{align*}
    \cS|_{U_{i+1}}
    & \subset
    \cR(\Stab_{\Gamma_{i+1}}(v_{i+1}) \times \Lambda \grpaction{} X) |_{U_{i + 1}} \\
    & =
    \cR(\Stab_{\Gamma_1}(v_1) \times \dotsm \times \Stab_{\Gamma_{i+1}}(v_{i+1}) \times \Gamma_{i+2} \times \dotsm \times \Gamma_n \grpaction{} X) |_{U_{i + 1}}
    \eqstop
  \end{align*}
  This completes the induction.  Putting $U = U_n$, we obtain the conclusion.
\end{proof}

We are now ready to prove our main theorem. We will use the notation of Theorem \ref{thm:main-result}.

\begin{proof}[Proof of Theorem \ref{thm:main-result}]
  There are non-negligible Borel subsets $U \subset X$ and $V \subset Y$ and a pmp Borel isomorphism $\vphi: (V, \eta_V) \ra (U, \mu_U)$ such that 
  \begin{equation*}
    (\vphi \times \vphi)(\cR(\BS(p_1, q_1) \times \dotsm \times \BS(p_l,q_l) \grpaction{} Y)|_V)
    =
    \cR(\BS(m_1, n_1) \times \dotsm \times \BS(m_k,n_k) \grpaction{} X)|_U
    \eqstop
  \end{equation*}
  
Put $\Gamma = \BS(m_1, n_1) \times \dotsm \times \BS(m_k,n_k)$. Then $(\vphi \times \vphi)(\cR(\langle b_1 \rangle \times \dotsm \times \langle b_l \rangle \grpaction{} Y)|_V)$ is amenable and commensurated by $\mathcal R(\Gamma \curvearrowright X) |_U$. By Corollary \ref{cor:embedding-of-amenable-subrelations}, for every $i \in \{1, \dotsc, k\}$, there exists a vertex $v_i$ in the Bass-Serre tree of $\BS(m_i, n_i)$ such that, up to taking a non-negligible Borel subset smaller than $U$, we have
  \begin{equation}
    \label{eq:first-inclusion}
    (\vphi \times \vphi)(\cR(\langle b_1 \rangle \times \dotsm \times \langle b_l \rangle \grpaction{} Y)|_{V})
    \subset
    \cR(\Stab(v_1) \times \dotsm \times \Stab(v_k) \grpaction{} X)|_{U}
    \eqstop
  \end{equation}
  
 Put $\Lambda = \BS(p_1, q_1) \times \dotsm \times \BS(p_l, q_l)$. Then ${(\vphi^{-1} \times \vphi^{-1})(\cR(\Stab(v_1) \times \dotsm \times \Stab(v_k) \grpaction{} X)|_{U})}$ is amenable and commensurated by $\mathcal R(\Lambda \curvearrowright Y) |_V$. By Corollary \ref{cor:embedding-of-amenable-subrelations}, for every $j \in \{1, \dotsc, l\}$, there exists a vertex $w_j$ in the Bass-Serre tree of $\BS(p_j, q_j)$ such that, up to taking a non-negligible Borel subset smaller than $V$, we have
  \begin{equation}
    \label{eq:second-inclusion}
    (\vphi^{-1} \times \vphi^{-1})(\cR(\Stab(v_1) \times \dotsm \times \Stab(v_k) \grpaction{} X)|_{U})
    \subset
    \cR(\Stab(w_1) \times \dotsm \times \Stab(w_l) \grpaction{} Y)|_{V}
    \eqstop
  \end{equation}
Combining (\ref{eq:first-inclusion}) and (\ref{eq:second-inclusion}) and writing $\mathcal W = {(\vphi^{-1} \times \vphi^{-1})(\cR(\Stab(v_1) \times \dotsm \times \Stab(v_k) \grpaction{} X)|_{U})}$, we get
\begin{equation}\label{eq:inclusion-equality}
  \cR(\langle b_1 \rangle \times \dotsm \times \langle b_l \rangle \grpaction{} Y)|_{V} \subset \mathcal W  \subset \cR(\Stab(w_1) \times \dotsm \times \Stab(w_l) \grpaction{} Y)|_{V}
  \eqstop
\end{equation}
Since $\cR(\langle b_1 \rangle \cap \Stab(w_1) \times \dotsm \times \langle b_l \rangle \cap \Stab(w_l) \grpaction{} Y)$ is ergodic and 
\begin{align*}
\cR(\langle b_1 \rangle \cap \Stab(w_1) \times \dotsm \times \langle b_l \rangle \cap \Stab(w_l) \grpaction{} Y) |_V \; &= \; \cR(\langle b_1 \rangle \times \dotsm \times \langle b_l \rangle \grpaction{} Y)|_{V} \quad\text{and} \\
\cR(\langle b_1 \rangle \cap \Stab(w_1) \times \dotsm \times \langle b_l \rangle \cap \Stab(w_l) \grpaction{} Y) \; &\subset \; \cR(\langle b_1 \rangle \times \dotsm \times \langle b_l \rangle \grpaction{} Y)
\end{align*}
it follows that 
$$\cR(\langle b_1 \rangle \cap \Stab(w_1) \times \dotsm \times \langle b_l \rangle \cap \Stab(w_l) \grpaction{} Y) = \cR(\langle b_1 \rangle \times \dotsm \times \langle b_l \rangle \grpaction{} Y).$$
Therefore, using freeness, we have $\langle b_j \rangle \cap \Stab(w_j) = \langle b_j \rangle$ and hence $\langle b_j \rangle \subset \Stab(w_j)$ for every $j \in \{1, \dotsc, l\}$. Since $2 \leq |p_j| \leq q_j$, we obtain $\langle b_j \rangle = \Stab(w_j)$ for every $j \in \{1, \dotsc, l\}$. Hence (\ref{eq:inclusion-equality}) yields
\begin{equation*}
 \cR(\Stab(v_1) \times \dotsm \times \Stab(v_k) \grpaction{} X)|_{U} = (\varphi \times \varphi)(\cR(\langle b_1 \rangle \times \dotsm \times \langle b_l \rangle \grpaction{} Y)|_{V})
  \eqstop
\end{equation*}

Since the action of the Baumslag-Solitar group on its Bass-Serre tree is vertex transitive, up to conjugation, we may assume that $\Stab(v_i) = \langle a_i\rangle$ for every $i \in \{1, \dots, k\}$. Hence, we have 
  \begin{equation}
    \label{eq:final-equality}
    \cR(\langle a_1 \rangle \times \dotsm \times \langle a_k \rangle \grpaction{} X)|_{U} =  (\vphi \times \vphi)(\cR(\langle b_1 \rangle \times \dotsm \times \langle b_l \rangle \grpaction{} Y)|_{V}) 
    \eqstop
  \end{equation}

Put $\mathcal T = \cR(\langle a_1 \rangle \times \dotsm \times \langle a_k \rangle \grpaction{} X)$ and $\mathcal S = \cR(\langle b_1 \rangle \times \dotsm \times \langle b_l \rangle \grpaction{} Y)$. By Proposition \ref{prop:aperiodicity-stable}$(i)$, we have 
\begin{equation}\label{eq:identification}
H((\varphi \times \varphi)(\mathcal S |_V) \subset \mathcal R(\Gamma \curvearrowright X) |_U) \cong H(\mathcal S |_V \subset \mathcal R(\Lambda \curvearrowright Y) |_V)
\end{equation}
and by Propositions \ref{prop:aperiodicity-stable}$(ii)$, \ref{prop:quotients-from-actions} and  \ref{prop:relative-profinite-completions-of-products}, we have
\begin{equation}\label{eq:isomorphism1}
H(\mathcal T |_U \subset \mathcal \cR(\Gamma \curvearrowright X) |_U) \cong H(\mathcal T \subset \mathcal \cR(\Gamma \curvearrowright X)) \cong  \rG(m_1, n_1) \times \dotsm \times \rG(m_k, n_k)
\end{equation}
 and
 \begin{equation}\label{eq:isomorphism2}
H(\mathcal S |_V \subset \mathcal \cR(\Lambda \curvearrowright Y) |_V) \cong H(\mathcal S \subset \mathcal \cR(\Lambda \curvearrowright Y)) \cong  \rG(p_1, q_1) \times \dotsm \times \rG(p_l, q_l).
\end{equation}

Combining (\ref{eq:final-equality}), (\ref{eq:identification}), (\ref{eq:isomorphism1}) and (\ref{eq:isomorphism2}), we get 
\begin{equation*}
  \rG(m_1, n_1) \times \dotsm \times \rG(m_k, n_k) \cong \rG(p_1, q_1) \times \dotsm \times \rG(p_l, q_l)
  \eqstop
\end{equation*}
We can now apply Theorem \ref{thm:main-result-on-completions} to obtain the conclusion. This finishes the proof of Theorem \ref{thm:main-result}.
\end{proof}

\bibliographystyle{mybibtexstyle}
\bibliography{operatoralgebras}

\begin{thebibliography}{CFW81}\setlength{\itemsep}{-1mm}\setlength{\parsep}{0mm}\small

\bibitem[BO08]{brownozawa08}
N.~P. Brown and N. Ozawa.
\newblock {\em {C$^*$-algebras and finite-dimensional approximations.}},
  volume~88 of {\em Graduate Studies in Mathematics}.
\newblock Providence, RI: American Mathematical Society, 2008.

\bibitem[Bri63]{britton}
J.~L. Britton.
\newblock {The word problem.}
\newblock {\em Ann. of Math.} \textbf{77}, 16--32, 1963.

\bibitem[BS62]{baumslagsolitar62}
G. Baumslag and D.~M. Solitar.
\newblock {Some two-generator one-relator non-Hopfian groups.}
\newblock {\em Bull. Amer. Math. Soc.} \textbf{68}, 199--201, 1962.

\bibitem[CFW81]{connesfeldmanweiss81}
A. Connes, J. Feldman, and B. Weiss.
\newblock {An amenable equivalence relation is generated by a single
  transformation.}
\newblock {\em Ergodic Theory Dynam. Systems} \textbf{1}, 431--450, 1981.

\bibitem[Dye58]{dye59}
H.~A. Dye.
\newblock {On groups of measure preserving transformations. I.}
\newblock {\em Amer. J. Math.} \textbf{81}, 119--159, 1959.

\bibitem[Dye63]{dye63}
H.~A. Dye.
\newblock {On groups of measure preserving transformations. II.}
\newblock {\em Amer. J. Math.} \textbf{85}, 551--576, 1963.

\bibitem[EW13]{elderwillis13}
M. Elder and G. Willis.
\newblock {Totally disconnected groups from Baumslag-Solitar groups.}
\newblock arXiv:1301.4775.

\bibitem[FM77]{feldmanmoore77}
J. Feldman and C.~C. Moore.
\newblock {Ergodic equivalence relations, cohomology, and von Neumann algebras.
  I.}
\newblock {\em Trans. Amer. Math. Soc.} \textbf{234}, 289--324, 1977.

\bibitem[FM97]{farbmosher98-BS-rigidity}
B. Farb and L. Mosher.
\newblock {A rigidity theorem for the solvable Baumslag-Solitar groups. (With
  an appendix by Daryl Cooper).}
\newblock {\em Invent. Math.} \textbf{131} (2), 419--451, 1998.

\bibitem[FSZ89]{feldmansutherlandzimmer89-subrelations}
J. Feldman, C. Sutherland, and R. Zimmer.
\newblock {Subrelations of ergodic equivalence relations.}
\newblock {\em Ergodic Theory Dynam. Systems} \textbf{9} (2), 239--269, 1989.

\bibitem[Fur99]{furman99}
A. Furman.
\newblock {Orbit equivalence rigidity.}
\newblock {\em Ann. of Math. (2)} \textbf{150} (3), 1083--1108, 1999.

\bibitem[Fur09]{furman11}
A. Furman.
\newblock {A survey of measured group theory.}
\newblock In B. Farb and D. Fisher, editors, {\em Geometry, rigidity and group
  actions}, pages 296--374. Chicago, IL: University of Chicago Press, 2011.

\bibitem[Gab00]{gaboriau00}
D. Gaboriau.
\newblock {Cost of equivalence relations and groups. (Co\^ut des relations
  d'\'equivalence et des groupes.)}.
\newblock {\em Invent. Math.} \textbf{139} (1), 41--98, 2000.

\bibitem[Gab02]{gaboriau02}
D. Gaboriau.
\newblock {$\ell^2$ invariants of equivalence relations and groups. (Invariants
  $\ell^2$ de relations d'\'equivalence et de groupes.)}.
\newblock {\em Publ. Math. Inst. Hautes {\'E}tud. Sci.} \textbf{95}, 93--150,
  2002.

\bibitem[Gab10]{gaboriau11-measured-group-theory}
D. Gaboriau.
\newblock {Orbit equivalence and measured group theory.}
\newblock In R. Bhatia et~al., editors, {\em Proceedings of the international
  congress of mathematicians, Hyderabad, India, August 19--27, 2010}, volume
  III, pages 1501--1527. Hackensack, NJ: World Scientific; New Delhi: Hindustan
  Book Agency, 2011.

\bibitem[Gro93]{gromov93-asymptotic-invariants}
M. Gromov.
\newblock {\em {Geometric group theory. Volume 2: Asymptotic invariants of
  infinite groups.}}, volume 182 of {\em London Mathematical Society Lecture
  Note Series}.
\newblock Cambridge: Cambridge University Press, 1993.

\bibitem[Kid06]{kida06}
Y. Kida.
\newblock {Measure equivalence rigidity of the mapping class group.}
\newblock {\em Ann. of Math.} \textbf{171}, 1851--1901, 2010.

\bibitem[Kid09]{kida09}
Y. Kida.
\newblock Introduction to measure equivalence rigidity of mapping class groups.
\newblock In A. Papadopoulos, editor, {\em {Handbook of Teichm{\"u}ller theory.
  Volume II.}} {Z\"urich: European Mathematical Society}, 2009.

\bibitem[Kid10]{kida10}
Y. Kida.
\newblock {Examples of amalgamated free products and coupling rigidity.}
\newblock {\em Ergodic Theory Dynam. Systems} \textbf{33}, 499--528, 2013.

\bibitem[Kid14]{kida11-BS}
Y. Kida.
\newblock Invariants of orbit equivalence relations and {B}aumslag-{S}olitar
  groups.
\newblock {\em Tohoku Math. J. (2)} \textbf{66} (2), 205--258, 2014.

\bibitem[KS00]{kaimanovichschmidt}
V.~A. Kaimanovich and K. Schmidt.
\newblock {Ergodicity of cocycles. 1: General theory.}, 2000.

\bibitem[Mol91]{moldavanskii91}
D.~I. Moldavanskii.
\newblock {Isomorphism of the Baumslag-Solitar groups.}
\newblock {\em Ukrainian Math. J.} \textbf{43}, 1569--1571, 1991.

\bibitem[MS06]{monodshalom06}
N. Monod and Shalom.
\newblock {Orbit equivalence rigidity and bounded cohomology.}
\newblock {\em Ann. of Math.} \textbf{164}, 825--878, 2006.

\bibitem[MV14]{meesschaertvaes13}
N. Meesschaert and S. Vaes.
\newblock Partial classification of the {B}aumslag-{S}olitar group von
  {N}eumann algebras.
\newblock {\em Doc. Math.} \textbf{19}, 629--645, 2014.

\bibitem[OW79]{ornsteinweiss80}
D.~S. Ornstein and B. Weiss.
\newblock {Ergodic theory of amenable group actions.}
\newblock {\em Bull. Amer. Math. Soc., New Ser.} \textbf{2}, 161--164, 1980.

\bibitem[Pop05]{popa07-cocycle-superrigidity}
S. Popa.
\newblock {Cocycle and orbit equivalence superrigidity for malleable actions of
  $w$-rigid groups.}
\newblock {\em Invent. Math.} \textbf{170} (2), 243--295, 2007.

\bibitem[Pop06a]{popa07-deformation-rigidity}
S. Popa.
\newblock {Deformation and rigidity for group actions and von Neumann
  algebras.}
\newblock In M. Sanz-Sol{\'e} et~al., editors, {\em Proceedings of the
  international congress of mathematicians, Madrid, Spain, August 22--30,
  2006}, volume I: Plenary lectures and ceremonies, pages 445--477. Z{\"u}rich:
  European Mathematical Society, 2007.

\bibitem[Pop06b]{popa08-spectral-gap}
S. Popa.
\newblock {On the superrigidity of malleable actions with spectral gap.}
\newblock {\em J. Amer. Math. Soc.} \textbf{21} (4), 981--1000, 2008.

\bibitem[Sch80]{schlichting80}
G. Schlichting.
\newblock {Operationen mit periodischen Stabilisatoren.}
\newblock {\em Arch. Math.} \textbf{34} (2), 97--99, 1980.

\bibitem[Ser80]{serre80}
J.-P. Serre.
\newblock {\em {Trees}}.
\newblock {Berlin-Heidelberg-New York: Springer}, 1980.

\bibitem[Sha03]{shalom04}
Y. Shalom.
\newblock {Harmonic analysis, cohomology, and the large-scale geometry of
  amenable groups.}
\newblock {\em Acta Math.} \textbf{192} (2), 119--185, 2004.

\bibitem[VD36]{vandantzig36}
D. Van~Dantzig.
\newblock {Zur topologischen Algebra. III. Brouwersche und Cantorsche Gruppen.}
\newblock {\em Compos. Math.} \textbf{3}, 408--426, 1936.

\bibitem[Why00]{whyte01-BS}
K. Whyte.
\newblock {The large scale geometry of the higher Baumslag-Solitar groups.}
\newblock {\em Geom. Funct. Anal.} \textbf{11} (6), 1327--1343, 2001.

\bibitem[Wil94]{willis94-structure}
G. Willis.
\newblock {The structure of totally disconnected, locally compact groups.}
\newblock {\em Math. Ann.} \textbf{300} (2), 341--363, 1994.

\bibitem[Zim84]{zimmer84}
R.~J. Zimmer.
\newblock {\em {Ergodic theory and semisimple groups.}}, volume~81 of {\em
  Monographs in Mathematics}.
\newblock Boston-Basel-Stuttgart: Birkh{\"a}user, 1984.

\end{thebibliography}

{\small
  \parbox[t]{200pt}{Cyril Houdayer\\CNRS - Universit\'e Paris-Est - Marne-la-Vall\'ee \\
    LAMA UMR 8050 \\ 77454 Marne-la-Vall\'ee cedex~2 
\\ France
    \\ {\footnotesize cyril.houdayer@u-pem.fr}}
  \hspace{15pt}
  \parbox[t]{200pt}{Sven Raum\\ RIMS \\
  Kitashirakawa-oiwakecho \\
606-8502 Sakyo-ku, Kyoto \\ 
Japan
    \\ {\footnotesize sven.raum@gmail.com}}
}

\end{document}